\documentclass[12pt]{colt2022}
\pdfoutput=1

\usepackage[utf8]{inputenc}

\usepackage{amsfonts}
\usepackage{dsfont}

\usepackage{framed}

\allowdisplaybreaks

\usepackage[english]{babel}
\usepackage{framed}
\usepackage{tikz}
\usepackage{bbold}

\setlength{\oddsidemargin}{0pt} % Marge gauche sur pages impaires
\setlength{\evensidemargin}{9pt} % Marge gauche sur pages paires
\setlength{\marginparwidth}{54pt} % Largeur de note dans la marge
\setlength{\textwidth}{481pt} % Largeur de la zone de texte (17cm)
\setlength{\voffset}{-18pt} % Bon pour DOS
\setlength{\marginparsep}{7pt} % Séparation de la marge
\setlength{\topmargin}{5pt} % Pas de marge en haut
\setlength{\headheight}{13pt} % Haut de page
\setlength{\headsep}{15pt} % Entre le haut de page et le texte
\setlength{\footskip}{37pt} % Bas de page + séparation
\setlength{\textheight}{600pt} % Hauteur de la zone de texte (25cm)
\setlength\parindent{0pt}

\usepackage{mathtools}
\usepackage{commath}
\usepackage{graphicx} % for "\scalebox" macro
\DeclareMathOperator*{\argmax}{arg\,max}

\newcommand{\robustsubroutine}{ \textsc{Robust Estimation Procedure}}

\newcommand{\R}{\mathbb{R}}
\newcommand{\N}{\mathbb{N}}

\newcommand{\E}{\mathbb{E}}
\newcommand{\V}{\mathbb{V}}

\newcommand{\C}{\mathbf{C}}
\newcommand{\G}{\mathcal{G}}

\newcommand{\hatC}{\widehat{\C}}
\newcommand{\cov}{\text{Cov}}
\newcommand{\hatcov}{\widehat{\text{Cov}}}
\newcommand{\var}{\mathbf{V}}
\newcommand{\hatvar}{\widehat{\var}}
\newcommand{\indS}[1]{\mathds{1}_{#1}}
\newcommand{\hatq}{\widehat{q}}

\newcommand{\mub}{\widehat{q}}
\newcommand{\cgamma}{c_\gamma}

\newcommand{\rappor}{\textsc{RAPPOR} }

\newcommand{\Indist}{\mathcal{A}}
\newcommand{\Indistkhi}{\Indist_{\chi^2}}

\newcommand{\CL}[1]{C_{#1}}

\usepackage{thmtools} 
\usepackage{thm-restate}

\usepackage[symbol]{footmisc}

\title[Robust and Private Estimation of Discrete Distributions]{Robust Estimation of Discrete Distributions under Local Differential Privacy}

\coltauthor{%
 \Name{Julien Chhor\thanks{Equal contributions} } \Email{julien.chhor@ensae.fr}\\
 \addr CREST/ENSAE
 \AND
 \Name{Flore Sentenac$^*$} \Email{flore.sentenac@ensae.fr}\\
 \addr CREST/ENSAE
}

\usepackage{times}

\begin{document}

\maketitle

\begin{abstract}%
Although robust learning and local differential privacy are both widely studied fields of research, combining the two settings is an almost unexplored topic. We consider the problem of estimating a discrete distribution in total variation from $n$ contaminated data batches under a local differential privacy constraint.  
A fraction $1-\epsilon$ of the batches contain $k$ i.i.d. samples drawn from a discrete distribution $p$ over $d$ elements. To protect the users' privacy, each of the samples is privatized using an $\alpha$-locally differentially private mechanism. The remaining $\epsilon n $ batches are an adversarial contamination. The minimax rate of estimation under contamination alone, with no privacy, is known to be $\epsilon/\sqrt{k}+\sqrt{d/kn}$, up to a $\sqrt{\log(1/\epsilon)}$ factor. Under the privacy constraint alone, the minimax rate of estimation is $\sqrt{d^2/\alpha^2 kn}$. We show that combining the two constraints leads to a minimax estimation rate of $\epsilon\sqrt{d/\alpha^2 k}+\sqrt{d^2/\alpha^2 kn}$ up to a $\sqrt{\log(1/\epsilon)}$ factor, larger than the sum of the two separate rates. 
We provide a polynomial-time algorithm achieving this bound, as well as a matching information theoretic lower bound. 
\end{abstract}

\hfill

\begin{keywords}%
  Privacy, Robustness, Adversarial Contamination, Multionmial Distributions, Statistical Optimality%
\end{keywords}

\section{Introduction}

In recent machine learning developments, the growing need to analyze potentially corrupted, biased or sensitive data has given rise to unprecedented challenges.
To extract relevant information from today's data, studying algorithms under new learning constraints has emerged as a major necessity. 
To name a few, let's mention learning from incomplete data, transfer learning, fairness, robust learning or privacy. 
Although each one of them has been subject to intense progress in recent works, combining several learning constraints is not conventional. In this work, we propose to study how to estimate discrete distributions under the constraint of both being robust to adversarial contamination and of ensuring local differential privacy.\\

On the one hand, robust learning has received considerable attention over the past decades. 
This recent research has been developing in two main directions.
The first one deals with robustness to heavy tails, see~\cite{catoni2012challenging}, see also \cite{lugosi2019mean} for an excellent review. The second one explores robustness to outliers. 
It mainly considers two contamination models, which are the Huber contamination model~\cite{huber1968robust}, \cite{huber1992robust}, \cite{huber2004robust}, \cite{huber2009robust}, where the outliers are iid with an unknown probability distribution, and the \textit{adversarial} contamination, where the outliers are added by a malicious adversary who knows the estimation procedure, the underlying distribution and the data and seeks to deteriorate the procedure's estimation performance (\cite{diakonikolas2019robust,rousseeuw2011robust,dalalyan2020all,lecue2020robust}). \\

On the other hand, preserving the privacy of individuals has emerged as a major concern, as more and more sensitive data are collected and processed. The most commonly used privatization framework is that of differential privacy (\cite{dwork2006calibrating}, \cite{butucea2020local}, \cite{lam2020minimax}, \cite{berrett2020locally}, \cite{cai2019cost}). Both central and local models of privacy are considered in the field. 
In the centralized case, a global entity collects the data and analyzes it before releasing a privatized result, from which the original data should not be possible to infer. In local privacy, the data themselves are released and should remain private (\cite{duchi2014local}). The paper focuses on the latter notion. A vast line of work also studies private mechanism under communication constraints (\cite{https://doi.org/10.48550/arxiv.2007.10976}, \cite{https://doi.org/10.48550/arxiv.2010.06562}, \cite{https://doi.org/10.48550/arxiv.2011.00083}), which we do not consider here, but adding a communication constraint would be interesting future work. \\

Connections between robustness and \textit{global} differential privacy have been recently discussed in
(\cite{pinot2019unified,lecuyer2019certified}, \cite{naseri2020toward}). These papers show that the two notions rely on the same theoretical concepts, and that results in the two fields are related. In other words, robustness and \textit{global} differential privacy work well together. Several papers developed algorithms under robustness and \textit{global} differential privacy constraints (\cite{liu2021robust}, \cite{https://doi.org/10.48550/arxiv.2111.12981}, \cite{https://doi.org/10.48550/arxiv.2111.06578}, \cite{https://doi.org/10.48550/arxiv.2111.11320}).\\

In this paper, we study how \textit{local} differential privacy interacts with robustness. This interaction has been studied previously in \cite{https://doi.org/10.48550/arxiv.1909.09630}, where the authors provide upper and lower bound for estimating discrete distributions under the two constraints. The lower bound was later tightened in \cite{pmlr-v139-acharya21a}. The papers also study testing. We detail in Section \ref{sec:relatedwork} how our setting is a generalisation of theirs. The work of \cite{li2022robustness} also considers this interaction. We explain in more details the differences between their setting and ours in Section \ref{sec:relatedwork}.\\

In this paper, we study how to combine robust statistics with local differential privacy for estimating discrete distributions over finite domains. 
Assume that we want to gather information from $n$ data centers (think of $n$ hospitals for instance). For each of them, we collect $k$ iid observations with unknown discrete distribution $p$ to be estimated. 
To protect the users' privacy (patients data in the hospital example), each single one of the $nk$ observations is privatized using an $\alpha$-locally differentially private mechanism (see the formal definition of local differential privacy in Subsection \ref{subsec:definitions}). 
However, an $\epsilon$-fraction of the data centers are untrustworthy and can send adversarially chosen data.
The goal is to estimate $p$ in total variation distance (or $\ell_1$ distance) from these $n$ corrupted and privatized batches of size $k$. 
This setting is quite natural, as in many applications, the data are collected in batches, some of which may be untrustworthy or even adversarial.
%Each batch is therefore assumed to contain $k \geq 1$ observations and to be either entirely clean or adversarially contaminated. 
%The non-contaminated batches are iid.
%Moreover, we suppose that they also contain $k$ iid observations which correspond to privatized versions of $k$ iid random variables with discrete distribution $p$ to be estimated (see the formal definition of local differential privacy in \ju{compléter}).\\

\subsection{Related work}\label{sec:relatedwork}

With the local differential privacy constraint only (i.e. without contamination), the problem of estimating discrete distributions has been solved in (\cite{10.1145/773153.773174}, \cite{https://doi.org/10.48550/arxiv.0803.0924})where the authors propose a polynomial-time and minimax optimal algorithm for estimation under $\ell_1$ and $\ell_2$ losses. 
Note that \textit{without privacy and outliers}, the minimax estimation rate in $\ell_1$ is known to be $\sqrt{\frac{d}{N}}$, where $N$ is the number of iid samples with a discrete distribution over $d$ elements (see, e.g.~\cite{han2015minimax}). The paper \cite{duchi2014local} shows that under privacy alone, the $\ell_1$ minimax rate scales as~$\frac{d}{\alpha\sqrt{N}}$. We give an alternative proof of the lower bound of~\cite{duchi2014local}, in Appendix \ref{app:proofLBprivacyNoOutliers}.\\

With the robustness constraint only (i.e. with $n$ adversarially corrupted batches but without local differential privacy), the problem of estimating discrete distributions has been considered in~\cite{qiao2017learning}. 
For $k = 1$, it is well known that $\Omega(\epsilon)$ error is unavoidable. However,~\cite{qiao2017learning} surprisingly prove that the error can be reduced provided that $k$ is large enough.
More precisely, they show that with no privacy but under contamination, the minimax risk of estimation under $\ell_1$ loss from $n$ batches of size $k$ and $\epsilon$ adversarial corruption on the batches scales as $\sqrt{\frac{d}{N}} + \frac{\epsilon}{\sqrt{k}}$, where $N = nk$.
\cite{qiao2017learning} both provide an information theoretic lower bound and a minimax optimal algorithm, unfortunately running in exponential time in either $k$ or $d$. 
Polynomial-time algorithms were later proposed by \cite{chen2020efficiently}, \cite{jain2020optimal} and were shown to reach the information theoretic lower bound up to an extra $\sqrt{\log(\frac{1}{\epsilon})}$ factor.
In this specific setting, it is not known if this extra factor represents a computational gap between polynomial-time and exponential-time algorithms. 
However, for the problem of robust mean estimation of \textit{normal distributions}, some lower bounds suggest that this exact quantity cannot be removed from the rate
of computationally tractable estimators (see~\cite{diakonikolas2017statistical}).\\

Closer to our setting, the papers by~\cite{https://doi.org/10.48550/arxiv.1909.09630}, \cite{pmlr-v139-acharya21a} and \cite{li2022robustness} combine robustness with local differential privacy. The problem studied here is a generalisation of the first two papers where the authors consider un-batched data, which corresponds to $k=1$ in our setting. 
The setting considered by~\cite{li2022robustness} is not the same as ours, as do not consider discrete distributions and implicitly assume $k=1$.
More importantly, in their setting, contamination comes \textit{before} privacy: some of the raw data $X_1,\dots, X_n$ are outliers themselves, and the privacy mechanism is applied on each $X_i$. Conversely, in our work and in the previous two papers, contamination occurs \textit{after} privacy: none of the raw data are outliers and the adversary is allowed to choose the contamination directly on the set of privatized data.
As we will highlight below, this difference yields fundamentally different phenomena compared to the results in~\cite{li2022robustness}.

\subsection{Summary of the contributions}

In this paper, we study the interplay between local differential privacy and \textit{adversarial} contamination, when the contamination comes \textit{after} the data have been privatized. In this case, we prove that the resulting estimation rate is not merely the sum of the two estimation rates stated in~\cite{duchi2014local} and~\cite{qiao2017learning} but is always slower. More specifically, the term due to the contamination in the bound suffers a multiplicative inflation of $\sqrt{d}/\alpha$. This generalizes a phenomenon first observed in ~\cite{https://doi.org/10.48550/arxiv.1909.09630}.  This phenomenon stands in contrast with~\cite{li2022robustness}, for which the resulting rate is exactly the sum of the rate with privacy but no contamination, plus the rate with contamination but no privacy. The reason is that in~\cite{li2022robustness}, contamination occurs \textit{before} privacy. We provide an explicit algorithm that returns an estimator achieving the optimal bound up to a factor $\sqrt{\log(1/\epsilon)}$, and runs polynomially in all parameters. This algorithm is an adaptation to our setting of methods that were previously used for robust estimation of discrete distributions (\cite{jain2020optimal, jain2021robust}). On a side note, the algorithms introduced in \cite{https://doi.org/10.48550/arxiv.1909.09630} and \cite{acharya2021robust} require the use of a public coin. The proposed algorithm also holds in their setting and relieves this assumption.

\section{Setting}

\subsection{Definitions}\label{subsec:definitions}

For any integer $d \geq 2$, denote by $\mathcal{P}_d = \Big\{p \in \R^d ~\big| ~  \forall j : p_j \geq 0 ~\text{ and } \sum_{j=1}^d p_j = 1\Big\}$ the set of probability vectors over $\{1,\dots,d\}$. 
For any $x \in \R^d$, we write $\|x\|_1 = \sum\limits_{j \in [d]} |x_j|$ and $\|x\|_2^2 = \sum\limits_{j \in [d]} x_j^2$. For any two probability distributions $p,q$ over some measurable space $(\mathcal{X}, \mathcal{A})$, we denote by
\begin{align*}
    TV(p,q) &= \sup\limits_{A \in \mathcal{A}} |p(A) - q(A)| \text{  the total variation between $p$ and $q$.}
\end{align*}

Fix $\alpha \in (0,1)$ and consider two measurable spaces $(\mathcal{X}, \mathcal{A})$ and $(\mathcal{Z},\mathcal{B})$. A Markov transition kernel $Q : (\mathcal{X}, \mathcal{A}) \to (\mathcal{Z},\mathcal{B})$ is said to be a (non-interactive) $\alpha$-\textit{locally differentially private} mechanism if it satisfies
\begin{equation}\label{def_LDP}
    \sup_{B \in \mathcal{B}} ~ \sup_{x,x' \in \mathcal{X}} ~ \frac{Q(B|x)}{Q(B|x')} \leq e^\alpha.
\end{equation}

For any $x \in \mathcal{X}$, we say that the random variable $Z$ is a privatized version of $x$ if $Z \sim Q(\cdot|x)$. The measurable space $(\mathcal{Z},\mathcal{B})$ is called the \textit{image space} of $Q$. 
In what follows, we use the Landau notation $O$ which hides an absolute constant, independent of $d, \epsilon, n ,k, \alpha, Q, p$.

\subsection{Model}

We consider the problem of learning a discrete distribution $p$ over a finite set $\{1,\dots,d\}$, $d \geq 3$ under two learning constraints: a) ensuring $\alpha$-local differential privacy and b) being robust to adversarial contamination. To this end, we assume that the data are generated as follows. For some small enough absolute constant $c \in (0,\frac{1}{4})$ and for some known corruption level $\epsilon \in (0,c)$, we will use the notation $n' = n(1-\epsilon)$ throughout and assume that $n' \in \N$.

\begin{enumerate}
    \item First, $n'$ iid \textit{batches} of observations $X^1,\dots,X^{n'}$ are collected.
    More precisely, each batch $X^b$ can be written as $X^b = (X_1^b, \dots, X_k^b)$ and consists of $k$ iid random observations with an unknown discrete distribution $p \in \mathcal{P}_d$, i.e. $\forall (b,l,j) \in [n'] \times [k] \times [d]:  \mathbb{P}(X_l^b = j) = p_j$.
    \item Second, we privatize each of the $n'k$ observations using an $\alpha$-LDP mechanism $Q$, yielding $n'$ iid batches $Y^1, \dots, Y^{n'}$ such that $Y^b = (Y_1^b, \dots, Y_k^b)$ where $Y_l^b| X_l^b \sim Q(\cdot|X_l^b)$. 
    We denote by $Qp$ the distribution of any random variable $Y_l^b$. 
    We then have: $Qp(dz) = \sum\limits_{j\in[d]} p_j Q(dz|j)$, where $Q(dz|j)$ is a shorthand for $Q(dz|X=j)$. 
    The mechanism $Q$ is chosen by the statistician in order to preserve statistical performance while ensuring privacy.
    \item An adversary is allowed to build $n\epsilon$ batches $Y^{n'+1}, \dots, Y^n$ on which no restriction is imposed. Then, he shuffles the set of $n$ batches $(Y_1,\dots,Y_n)$. The resulting set of observations, denoted as $B = (Z^1, \dots, Z^n)$, is referred to as the $\epsilon$-corrupted family of batches.
\end{enumerate}

The observed dataset therefore consists of $n = |B|$ batches of $k$ samples each. Among these batches is an unknown collection of \textit{good batches}
$B_G \subset B$ of size $n(1-\epsilon)$, corresponding to the non-contaminated batches. The remaining set $B_A = B\setminus B_G$ of size $n\epsilon$, denotes the unknown set of adversarial batches. \\

The statistician never has access to the actual observations $X^1,\dots, X^{n'}$, but only to $Z^1, \dots, Z^n$ where $Z^b = (Z_1^b, \dots, Z_k^b)$. 
Each batch is assumed to be either entirely clean or adversarially corrupted. 
Note that observing $n$ batches of size $k$ encompasses the classical case where $k=1$, for which the data consist of $n$ iid and $\epsilon$-corrupted \textit{single observations} rather than batches. 
On top of being more general, the setting with general $k$ allows us to derive faster rates for large $k$ than for the classical case $k=1$.
Note also that in our setting, the contamination comes after the data have been privatized, which is one of the main differences with~\cite{li2022robustness}, where the authors assume that the Huber contamination comes before privacy. 
The examples considered by the authors are $1$-dimensional mean estimation and density estimation without batches (i.e. for $k=1$). In these settings, the authors surprisingly prove that the algorithm that would be used in absence of corruption is automatically robust to Huber contamination.\\

In our setting, we would like to answer the following questions:

\vspace{-2.5mm}

\begin{enumerate}
    \item When contamination comes \textit{after} privacy, do we need to design robust procedures or would the private procedure be automatically robust like in \cite{li2022robustness}?
        
    \vspace{-2.5mm}
    
    \item If $Q_\epsilon$ denotes the optimal privacy mechanism for $\epsilon$-contamination, how does $Q_\epsilon$ depend on $\epsilon$? 
\end{enumerate}

We answer these questions as follows:
    
\vspace{-2.5mm}
    
\begin{enumerate}
    \item With contamination \textit{after} privacy, the procedure that we would use if there were no contamination is no longer robust and a new algorithm is needed.
    
    \vspace{-2.5mm}
    
    \item The optimal privacy mechanism $Q_\epsilon$ does not depend on $\epsilon$, whereas the optimal estimator does.
\end{enumerate}

We introduce the minimax framework. An \textit{estimator} $\widehat{p}$ is a measurable function of the data taking values in~$\mathcal{P}_d$.
$$\widehat{p}: \mathcal{Z}^{nk} \longrightarrow \mathcal{P}_d. $$

For any set of $n'$ clean batches $Y^1,\dots, Y^{n'}$ where $Y^b=(Y_1^b, \dots ,Y_k^b)$ and $n' = n(1-\epsilon)$, we define the set of $\epsilon$-contaminated families of $n$ batches as 
\begin{equation}\label{defC}
    \mathcal{C}(Y^1,\dots, Y^{n'}) = \left\{(Z^b)_{b=1}^n \,\Big|\, \exists J \hspace{-1mm} \subset \hspace{-1mm} [n] \, \text{ s.t. } |J| \hspace{-1mm} = \hspace{-1mm} n\epsilon \text{ and } \{Z^{\, b}\}_{b \notin J} = \{Y^1,\dots, Y^{n'}\}\right\}.% \, |J| \hspace{-1mm} = \hspace{-1mm} \lfloor n\epsilon \rfloor \text{ and } (Z'_b)_{i \in J} = (Z^b)_{i \in J} \right\}.
\end{equation}

We are interested in estimating $p \in \mathcal{P}_d$ with guarantees in high probability. We therefore introduce the minimax estimation rate of $p$ in high probability as follows.

\begin{definition}\label{Def_minimax_risk}
Given $\delta>0$, the minimax rate of estimation rate of $p \in \mathcal{P}_d$ given the privatized and $\epsilon$-corrupted batches $(Z^b)_{ b=1}^n$ where $\forall i \in \{1,\dots, n\}: Z^b = (Z_1^b, \dots Z_k^b)$ is defined as the quantity $\psi_{\delta}^*(n,k,\alpha, d, \epsilon)$ satisfying
\begin{equation}\label{def:minimax_rate}
    \psi_\delta^*(n,k,\alpha, d, \epsilon) = \inf \Bigg\{\psi>0 ~\Big|~ \inf_{\hat p} \sup_{p \in \mathcal{P}_d} \mathbb{P}\Big(\sup_{z \in \mathcal{C}(Y)} \big\|\widehat{p}(z)-p\big\|_1 > \psi\Big) \leq \delta\Bigg\}.
\end{equation}
\end{definition}

where the infimum is taken over all estimators $\widehat{p}$ and all $\alpha$-LDP mechanisms $Q$, and the expectation is taken over all collections of $n'$ clean batches $Y^1, \dots, Y^{n'}$ where $Y^b = (Y_1^b,\dots, Y_k^b)$ and $Y_l^b \overset{iid}{\sim} Qp$.
Informally, $\psi^*_\delta$ represents the infimal distance such that there exists an estimator $\hat{p}$ able to estimate any $p \in \mathcal{P}_d$ within total variation $\psi^*_\delta$ with probability $\geq 1-\delta$.
%In what follows, we will give guarantees for $\delta = O(e^{-d})$ and drop the indexation in $\delta$.
The $\ell_1$ norm is a natural metric for estimating discrete distributions since $TV(p,q) = \frac{1}{2}\|p-q\|_1$ for any $p,q \in \mathcal{P}_d$ (see~\cite{tsybakov2008introduction}).

\section{Results}
We now state our main Theorem.
\begin{theorem}\label{main_th}
Assume $d \geq 3$. There exist absolute constants $c, C, C', C''>0$ such that for $\delta = C' e^{-d}$ $1-C' e^{-d}$, we have:
$$\psi^*_\delta(n,k,\alpha, \epsilon, d) \geq c\bigg\{\bigg(\frac{d}{\alpha\sqrt{k n}} + \frac{\epsilon}{\alpha} \sqrt{\frac{d}{k}} \bigg) \land 1 \bigg\},$$
and if $n \geq C'' d$ then
$$ \psi^*_\delta(n,k,\alpha, \epsilon, d)\leq C\bigg\{\bigg(\frac{d }{\alpha\sqrt{k n}} + \frac{\epsilon\sqrt{\log(1/\epsilon)}}{\alpha} \sqrt{\frac{d}{k}} \bigg) \land 1\bigg\}.$$
\end{theorem}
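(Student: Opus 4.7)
I would prove the lower and upper bounds separately, each splitting naturally into two regimes corresponding to the two additive terms.

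\textbf{Upper bound.} The plan is to exhibit a single $\alpha$-LDP mechanism $Q$ and a robust estimator reaching the stated rate. I would take $Q$ to be a RAPPOR-style (or Hadamard-response) mechanism that maps each $X\in[d]$ to a $\{0,1\}^d$-vector $Y$ whose coordinates satisfy $\mathbb{E}[Y_j\mid X] = q_0 + (q_1-q_0)\mathds{1}\{X=j\}$ with $q_1/q_0 \leq e^\alpha$. Then, for each batch $b$, form the batch mean $\bar Y^b \in \mathbb{R}^d$; this is an (easily debiased) linear estimator of $\mu(p) = q_0\mathbf{1}+(q_1-q_0)p$. The crucial calculation is the covariance bound: for a clean batch, $\operatorname{Cov}(\bar Y^b)$ has operator norm $O(1/(\alpha^2 k))$, since each coordinate is bounded Bernoulli with variance $O(\alpha^{-2}/k)$. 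Having reduced to a robust mean estimation problem for $n$ vectors with $\epsilon n$ adversarial outliers and bounded covariance, I would plug in a black-box polynomial-time robust mean estimator (for instance the iterative filtering algorithm used by \cite{jain2020optimal}) to obtain $\hat\mu$ with $\|\hat\mu - \mu(p)\|_2 \lesssim \sqrt{d/(\alpha^2 kn)} + \epsilon\sqrt{\log(1/\epsilon)}/(\alpha\sqrt{k})$. Inverting the affine map and converting $\ell_2$ to $\ell_1$ by Cauchy--Schwarz ($\|\cdot\|_1 \leq \sqrt{d}\|\cdot\|_2$) gives the announced bound. The projection onto $\mathcal{P}_d$ at the end only worsens the bound by a constant factor.

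\textbf{Lower bound.} I would establish the two terms separately and take the max. The term $d/(\alpha\sqrt{kn})$ is essentially the LDP lower bound of \cite{duchi2014local}, whose proof via $\chi^2$ data-processing under $\alpha$-LDP is recalled in Appendix~\ref{app:proofLBprivacyNoOutliers}; since the minimax rate can only grow under the extra contamination constraint, this term survives directly. The novel term $\epsilon\sqrt{d/k}/\alpha$ is where the interaction appears. The approach is an Assouad-type construction: take $p_\sigma = \frac{1}{d}\mathbf{1} + \frac{\rho}{d}\sigma$ for $\sigma\in\{-1,+1\}^d$ with $\sum_j \sigma_j = 0$, where $\rho$ is a parameter to be tuned. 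Any two such $p_\sigma,p_{\sigma'}$ differ in $\ell_1$ by $\rho\cdot d_H(\sigma,\sigma')/d$, so at Hamming distance $\Omega(d)$ they are $\ell_1$-separated by $\Omega(\rho)$. For an adversary with budget $\epsilon$ to render two data distributions indistinguishable, it suffices that the batch-level total variation $\mathrm{TV}\bigl((Qp_\sigma)^{\otimes k},(Qp_{\sigma'})^{\otimes k}\bigr)\leq 2\epsilon$. Combining Pinsker, tensorization of KL, and the LDP KL-contraction $\mathrm{KL}(Qp,Qp')\leq c\alpha^2 \chi^2(p,p')$, this becomes $k\alpha^2 \chi^2(p_\sigma,p_{\sigma'})\lesssim \epsilon^2$, which with my parametrization amounts to $k\alpha^2\rho^2/d \lesssim \epsilon^2$, i.e. $\rho \lesssim \epsilon\sqrt{d/k}/\alpha$. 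Averaging over $\sigma$ via an Assouad reduction (one coordinate at a time, so that the single-coordinate testing problem is what must be hard) yields the factor $\sqrt d$ that a plain two-point Le Cam would miss.

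\textbf{Main obstacle.} The delicate step is the lower bound on the contamination-privacy cross-term. A naive Le Cam two-point argument gives only $\epsilon/(\alpha\sqrt k)$ and misses the $\sqrt d$ inflation that is the main qualitative point of the theorem. Recovering the $\sqrt d$ factor requires the Assouad construction together with the sharp $\chi^2$-contraction inequality under LDP (rather than the cruder TV-contraction $\mathrm{TV}(Qp,Qp')\leq \alpha\,\mathrm{TV}(p,p')$), plus a careful check that the adversary can couple the contaminated laws coordinate-by-coordinate so that the Assouad cost is controlled by the same batch-level TV budget $\epsilon$ for each coordinate flip. Once that construction is in place, the rest of the argument is mechanical: combine the two lower bounds, truncate by $1$ since $\|\hat p-p\|_1\leq 2$ always, and the matching upper bound from the polynomial-time algorithm closes the gap up to the $\sqrt{\log(1/\epsilon)}$ factor known to be unavoidable for efficient robust mean estimation.
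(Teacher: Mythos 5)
This is where your proposal genuinely departs from the paper and, I believe, breaks down. You assert that ``a naive Le Cam two-point argument gives only $\epsilon/(\alpha\sqrt k)$ and misses the $\sqrt d$ inflation,'' and you propose to recover $\sqrt d$ via Assouad. In fact the paper's proof of the contamination term \emph{is} a two-point (Le Cam--type) argument; the $\sqrt d$ does not come from a many-hypotheses reduction. The key is that a naive two-point argument only misses $\sqrt d$ if one uses a \emph{uniform} contraction bound such as $\chi^2(Qp\|Qq)\lesssim\alpha^2\chi^2(p\|q)$ (which for $p,q$ near uniform reads $\lesssim\alpha^2 d\|\Delta\|_2^2$). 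The paper instead writes $\chi^2(Qp\|Qq)\le e^\alpha\,\Delta^T\Omega_Q\Delta$ with $\Omega_Q$ a $d\times d$ psd matrix of trace $\le d e^2\alpha^2$; for a \emph{fixed} mechanism $Q$, at least $2d/3$ eigenvalues of $\Omega_Q$ are $\le 3e^2\alpha^2$, so there is a subspace $V$ of dimension $\gtrsim d$ on which $\Delta^T\Omega_Q\Delta\lesssim\alpha^2\|\Delta\|_2^2$ --- a factor $d$ tighter than the uniform contraction. Combined with Lemma~\ref{norm_1_geq_norm_2}, which produces $\Delta\in V$ with $\|\Delta\|_1\gtrsim\sqrt d\,\|\Delta\|_2$, this is exactly what yields the extra $\sqrt d$ inside a two-point construction. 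Your Assouad-based alternative has a more serious problem than just being a different route: in the $\epsilon$-adversarial-batch model the corruption budget is \emph{global}, not per-coordinate of $\sigma$. You acknowledge this (``a careful check that the adversary can couple the contaminated laws coordinate-by-coordinate'') but that check is precisely what fails. For the adversary to make a common observed law, one needs all $2^d$ batch laws $\{(Qp_\sigma)^{\otimes k}\}$ to lie within TV-radius $\epsilon$ of a single center, which constrains $\rho$ by the \emph{diameter} pair (Hamming distance $d$), not by single-coordinate flips; that recovers only $\rho\lesssim\epsilon/(\alpha\sqrt k)$, the naive two-point rate. There is no off-the-shelf ``robust Assouad'' reduction that lets each coordinate test consume its own $\epsilon$ budget. (As a sanity check, if one could, then plugging in the also-available contraction $KL(Qp\|Qq)\lesssim\alpha^2\,TV(p,q)^2$ instead of $\chi^2$-contraction would yield $\rho\lesssim\epsilon d/(\alpha\sqrt k)$ and hence a lower bound \emph{exceeding} the matching upper bound --- a contradiction.) So the Assouad piece of your plan contains a genuine gap, and the ingredient you would need to fill it is exactly the spectral low-eigenvalue-subspace construction plus the $\ell_1/\ell_2$-ratio lemma, which your sketch does not contain.

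\textbf{On the upper bound.} Your route is genuinely different from the paper's and, I believe, viable in outline. The paper works directly with the sup-over-subsets (i.e., $\ell_1$/TV) metric and certifies it via a Grothendieck SDP relaxation over Gram matrices (Lemmas~\ref{lem:matrixexpression} and \ref{lem:Groetendickapprox}), whereas you propose to treat batch means as $n$ vectors with bounded covariance, apply an $\ell_2$ robust mean estimator, and convert via $\|\cdot\|_1\le\sqrt d\,\|\cdot\|_2$. The final rates agree because the covariance of a clean batch mean is nearly isotropic, so the Cauchy--Schwarz conversion is not lossy. One correction: the covariance bound you state is off. Each coordinate of a privatized sample is a Bernoulli, hence has variance $O(1)$ regardless of $\alpha$, so $\operatorname{Cov}(\bar Y^b)$ has operator norm $O(1/k)$, not $O(1/(\alpha^2 k))$; the factor $1/\alpha$ enters only through the debiasing map $\widehat p=(\widehat q-\lambda\mathds{1})/(1-2\lambda)$ with $1-2\lambda\asymp\alpha$. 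You should also verify that the black-box $\ell_2$ estimator you invoke actually delivers the $\epsilon\sqrt{\log(1/\epsilon)}$ (rather than $\sqrt\epsilon$) dependence with the concrete tail structure here --- bounded rather than Gaussian batch means --- and that its failure probability can be made $O(e^{-d})$ under the assumption $n\ge C''d$ as the theorem requires. These are details, not conceptual obstacles, but they are not free.
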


In short, we prove that with probability at least $1-O(e^{-d})$, it is possible to estimate any $p \in \mathcal{P}_d$ within total variation of the order of $\bigg(\frac{d}{\alpha\sqrt{k n}} + \frac{\epsilon}{\alpha} \sqrt{\frac{d}{k}} \bigg) \land 1$ up to log factors and provided that $n \geq C'' d$. We can compare this rate with existing results in the literature.
\begin{itemize}
    \item As shown in \cite{duchi2014local}, the term $\frac{d}{\alpha\sqrt{k n}} \land 1$ corresponds to the estimation rate under privacy if there were no outliers, with a total number of observations of $N = nk$.
    \item The term $\frac{\epsilon \sqrt{d}}{\alpha \sqrt{k}}\land 1$ reveals an interesting interplay between contamination and privacy. In absence of privacy, \cite{qiao2017learning} proved that the contribution of the contamination is of the order of $\frac{\epsilon}{\sqrt{k}} \land 1$. The effect of the corruption therefore becomes more dramatic when it occurs after privatization.
    \item Letting $k' = \frac{\alpha^2}{d}k$, our rate rewrites $\psi^*(n,k,\alpha, \epsilon, d) \asymp \Big(\sqrt{\frac{d}{k' n}} + \frac{\epsilon}{ \sqrt{k'}}\Big) \land 1$. 
    Noticeably, this rate exactly corresponds to the rate from \cite{qiao2017learning} if we had an $\epsilon$-corrupted family of $n$ \textit{non-privatized} batches $X_1,\dots, X_n$, and if each batch contained $k'$ observations. 
    The quantity $k'$ therefore acts as an effective sample size and the effect of privacy amounts to shrinking the number of observations by a factor $\alpha^2/d$.
    \item For the upper bound, the assumption $n \geq C'' d$ is classical in the robust statistics literature, even in the gaussian setting (see e.g.~\cite{dalalyan2020all}). 
\end{itemize}

\subsection{Lower bound}

The following Proposition yields an information theoretic lower bound on the best achievable estimation accuracy under local differential privacy and adversarial contamination.

\begin{proposition}\label{LB_Robustness_Privacy}
Assume $d\geq 3$. There exist two absolute constants $C,c>0$ such that for all $\epsilon \in (0,\frac{1}{2})$, for all estimator $\hat p$ and all $\alpha$-LDP mechanism $Q$, there exists a probability vector $p \in \mathcal{P}_d$ satisfying
$$ \mathbb{P}_p\left[\sup_{z' \in \mathcal{C}(Y)} \big\|\widehat{p}(z')-p\big\|_1 \geq c\bigg\{\bigg(\frac{d}{\alpha\sqrt{k n}} + \frac{\epsilon \sqrt{d}}{\alpha \sqrt{k}}\bigg) \land 1\bigg\}\right] \geq Ce^{-d},$$
where the probability $\mathbb{P}_p$ is taken over all collections of $n'$ clean batches $Y = (Y^1, \dots, Y^{n'})$ where $Y^b = (Y_1^b,\dots, Y_k^b)$ and $Y_l^b \overset{iid}{\sim} Qp$.
\end{proposition}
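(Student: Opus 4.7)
The plan is to derive the two terms of the claimed rate independently and take the maximum, which is at least half of their sum. The privacy-only term $d/(\alpha\sqrt{kn})$ is the usual LDP minimax rate for discrete distribution estimation without contamination, and I would defer its proof to Appendix~\ref{app:proofLBprivacyNoOutliers} (it follows from Fano's inequality applied on a packing of $\mathcal P_d$ of cardinality $2^{\Omega(d)}$ combined with the DJW KL-contraction for $\alpha$-LDP channels, and it alone produces the $Ce^{-d}$ probability quoted in the statement). The non-trivial part is the contamination term $\epsilon\sqrt{d}/(\alpha\sqrt{k})$, which I would obtain by a two-point reduction combined with the classical adversarial indistinguishability argument for batched data.

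\textbf{Two-point construction and chi-squared bound.} Fix an arbitrary $\alpha$-LDP mechanism $Q$. Take $p=(1/d,\dots,1/d)$ and, for a parameter $\eta\in(0,1)$ and a sign vector $\sigma\in\{-1,+1\}^d$ with $\sum_j\sigma_j=0$, set $q_\sigma=p+\tfrac{\eta}{d}\sigma$. Then $q_\sigma\in\mathcal P_d$ with $\|p-q_\sigma\|_1=\eta$ and, crucially, $\|p-q_\sigma\|_2^2=\eta^2/d$, i.e.\ the $\ell_1$-perturbation is spread uniformly over the $d$ coordinates. Writing $Q(dz\mid j)=Qp(dz)\bigl(1+\gamma_j(z)\bigr)$, LDP yields $|\gamma_j(z)|\leq e^\alpha-1\leq 2\alpha$ and $\sum_j\gamma_j(z)=0$ by definition of $Qp$. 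A direct computation gives
\[
\chi^2\bigl(Qq_\sigma\,\|\,Qp\bigr)\;=\;\frac{\eta^2}{d^2}\,\mathbb{E}_{Z\sim Qp}\Bigl[\Bigl(\sum_{j=1}^d\sigma_j\gamma_j(Z)\Bigr)^{\!2}\Bigr].
\]
Averaging over balanced $\sigma$, the cross terms cancel and $\mathbb{E}_\sigma\bigl(\sum_j\sigma_j\gamma_j(Z)\bigr)^{2}\leq C\sum_j\gamma_j(Z)^{2}\leq C\alpha^2 d$, so $\mathbb{E}_\sigma\chi^2(Qq_\sigma\|Qp)\leq C\alpha^2\eta^2/d$; hence there is some $\sigma^\star$ with $\chi^2(Qq_{\sigma^\star}\|Qp)\leq C\alpha^2\eta^2/d$. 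Combining $\mathrm{KL}\leq\chi^2$, tensorization over a batch of $k$ samples, Pinsker's inequality, and symmetry of $\mathrm{TV}$ then gives
\[
\mathrm{TV}\bigl((Qp)^{\otimes k},(Qq_{\sigma^\star})^{\otimes k}\bigr)\;\leq\;\sqrt{k\,\mathrm{KL}(Qq_{\sigma^\star}\|Qp)/2}\;\leq\;C'\alpha\eta\sqrt{k/d}.
\]

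\textbf{Adversarial indistinguishability and main obstacle.} Choosing $\eta=c\epsilon\sqrt{d/k}/\alpha$ for a sufficiently small absolute constant $c$ makes the above TV at most $\epsilon$, which is precisely the regime in which the classical batched-adversary coupling produces strategies under the hypotheses $p$ and $q_{\sigma^\star}$ that render the $n$ observed batches identically distributed. No estimator can then distinguish the two cases, so the two-point inequality forces $\sup_{z\in\mathcal C(Y)}\|\hat p(z)-p\|_1\geq \|p-q_{\sigma^\star}\|_1/2=\eta/2$ under at least one of the two hypotheses with constant probability, which is $\geq Ce^{-d}$ for $C$ small. Combining with the private Fano lower bound and max-vs-sum concludes the proof. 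The main obstacle is the $1/d$ factor in the $\chi^2$ bound, since this is what produces the extra $\sqrt{d}$ in the final rate: the LDP constraint only controls $|\gamma_j|\leq 2\alpha$ pointwise, which would naively give $\chi^2\leq C\alpha^2\eta^2$ and the suboptimal rate $\epsilon/(\alpha\sqrt{k})$ characteristic of the non-private contamination setting. Recovering the $\sqrt{d}$ requires simultaneously (i) spreading $p-q_\sigma$ uniformly across the $d$ coordinates so that $\|p-q_\sigma\|_2^2=\|p-q_\sigma\|_1^2/d$, and (ii) averaging over random signs $\sigma$ to kill the cross terms $\sigma_i\sigma_j$ in $\bigl(\sum_j\sigma_j\gamma_j\bigr)^2$. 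Neither ingredient is needed when only one of the two constraints (privacy or contamination) is active, which explains why this particular $\sqrt{d}$ inflation arises only when both are present.
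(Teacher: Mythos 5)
Your proposal is correct, and for the contamination term it takes a genuinely different (and shorter) route than the paper. The paper's proof works with the matrix $\Omega_Q(i,j)=\int q_i q_j\,d\mu$ (with $q_j(z)=Q(z|j)/Q(z|1)-1$), bounds its trace by $e^2\alpha^2 d$, counts eigenvalues to conclude that at least $2d/3$ of them are $O(\alpha^2)$, and then invokes a separate Gaussian-projection lemma (Lemma~\ref{norm_1_geq_norm_2}) to find a $\Delta$ in the span of the small eigenvectors (intersected with $\{x:x^\top\mathbf{1}=0\}$) with $\|\Delta\|_1\gtrsim\sqrt{d}\,\|\Delta\|_2$; the two indistinguishable hypotheses are then built as $p=(|\Delta_j|/\|\Delta\|_1)_j$ and $q=p-\Delta$. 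You instead anchor at $p=$ uniform, parametrize $\gamma_j(z)=Q(z|j)/Qp(z)-1$ so that $\sum_j\gamma_j\equiv 0$ automatically, and \emph{average} $\chi^2(Qq_\sigma\|Qp)$ over balanced sign perturbations $\sigma$. The averaging step is really $\mathbb E_\sigma\,\sigma^\top\Omega'\sigma\lesssim\operatorname{Tr}(\Omega')\leq 4\alpha^2 d$ for the relevant quadratic form $\Omega'$, so you are exploiting the \emph{same} trace bound that drives the paper's argument, but you extract the conclusion probabilistically rather than spectrally, which lets you skip both the eigenvalue count and Lemma~\ref{norm_1_geq_norm_2}. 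One small point worth making explicit if you wrote this up: for balanced $\sigma$ (which you need so that $q_\sigma\in\mathcal P_d$) the cross terms do not vanish exactly; one has $\mathbb E[\sigma_i\sigma_j]=-1/(d-1)$ for $i\neq j$, so $\mathbb E_\sigma\bigl(\sum_j\sigma_j\gamma_j\bigr)^2=\tfrac{d}{d-1}\sum_j\gamma_j^2$, using $\sum_j\gamma_j=0$ — this is where choosing $Qp$ rather than $Q(\cdot|1)$ as the reference earns its keep. You should also cap $\eta$ at a constant when $\epsilon\sqrt{d/k}/\alpha$ is large so that $q_\sigma$ stays a probability vector; this is exactly what produces the $\wedge\,1$ in the final rate. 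The rest of your plan (deferring the privacy term to Appendix~\ref{app:proofLBprivacyNoOutliers}, combining via a max-vs-sum argument, and turning the two-point indistinguishability into a constant-probability error event that dominates $Ce^{-d}$) matches the paper's case split.
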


The proof is given in Appendix \ref{sec:proof_LB}. 
At a high level, the term $\frac{d}{\alpha\sqrt{k n}} \land 1$ comes from the classical lower bound given in~\cite{duchi2014local}. 
The proof of the second term $\frac{\epsilon \sqrt{d}}{\alpha \sqrt{k}} \land 1$ is new. 
It is based on the fact that for any $\alpha$-LDP mechanism $Q$, it is possible to find two probability vectors $p,q \in \mathcal{P}_d$ such that $\|p-q\|_1 \gtrsim \frac{\epsilon \sqrt{d}}{\alpha \sqrt{k}} \land 1$ and $TV(Qp^{\otimes k},Qq^{\otimes k}) \leq \epsilon$. 
In other words, we prove:
$$ \inf_Q \hspace{-3mm} \sup_{\substack{
     (p,q)\in \mathcal{P}_d:  \\
      TV(Qp^{\otimes k},Qq^{\otimes k}) \leq \epsilon}} \hspace{-5mm} \|p-q\|_1 \gtrsim \frac{\epsilon \sqrt{d}}{\alpha \sqrt{k}} \land 1.
$$
In the proof, we argue that $TV(Qp^{\otimes k},Qq^{\otimes k}) \leq \epsilon$ represents an indistinguishability condition under $\epsilon$-contamination. Namely, it implies that, even if we had arbitrarily many clean batches drawn from $p$ or $q$, the adversary could add $n\epsilon$ corrupted batches such that the resulting family of batches has the same distribution under $p$ or $q$. 
By observing this limiting distribution, it is therefore impossible to recover the underlying probability distribution so that an error of $\|p-q\|_1/2$ is unavoidable.\\

To exhibit two vectors $p,q \in \mathcal{P}$ satisfying this, we restrict ourselves to vectors satisfying $\chi^2(Qp||Qq) \leq C\frac{\epsilon^2}{k}$ for some small enough absolute constant $C>0$, which implies that $TV(Qp^{\otimes k},Qq^{\otimes k}) \leq \epsilon$ (see~\cite{tsybakov2008introduction} section 2.4).
Noticeably, we prove the relation
$$ \chi^2(p||q) = \Delta^T \Omega \Delta,$$
where $\Delta = p-q$ and $\Omega = \Omega(Q) = \left[{\displaystyle\int_{\mathcal{Z}}} \left(\frac{Q(z|i)}{Q(z|1)} -1\right)\left(\frac{Q(z|j)}{Q(z|1)} -1\right) Q(z|1) dz\right]_{i,j \in [d]} \hspace{-5mm} $ is a nonnegative symmetric matrix. The eigenvectors of $\Omega$ play an important role. Namely, we prove that we can choose a vector $\Delta$ in the span of the first $\lceil\frac{2d}{3} \rceil$ eigenvectors of $\Omega$ such that $\Delta^T \Omega \Delta \leq C \frac{\epsilon^2}{k}$ and $\|\Delta\|_1\gtrsim \frac{\epsilon \sqrt{d}}{\alpha \sqrt{k}} \land 1$. Defining the vectors $p = \left(\frac{|\Delta_j|}{\|\Delta\|_1}\right)_{j=1}^d \in \mathcal{P}_d$ and $q = p-\Delta$ ends the proof.

\section{Upper bound}

We now address the upper bound by proposing an $\alpha$-LDP mechanism $Q$ for privatizing the clean data $X^1,\dots, X^{n'}$ as well as an algorithm $\widehat{p}$ for robustly estimating vector $p$ given an $\epsilon$-contaminated family of $n$ batches $Z^1,\dots, Z^n$.\\

Each non-private data point $X_i^{b} \in [d]$ is privatized using the \rappor algorithm introduced in (\cite{duchi2014local, kairouz2016discrete}). In this procedure, the \textit{privatization channel} $Q$ randomly maps each point  $X \in [d]$ to a point $Z \in \{0,1\}^d$ by flipping its coordinates independently at random with probability $\lambda= \frac{1}{e^{\alpha/2}+1}$:
$$
\forall j \in [d] : ~~ Z(j) = \begin{cases} \mathds{1}_{X=j}&\text{with probability } 1-\lambda,\\
    1- \mathds{1}_{X=j} &\text{otherwise.}
    \end{cases} ~~ 
$$ 
We now derive a polynomial-time algorithm taking as input the $\epsilon$-contaminated family of batches $(Z^b)_{b \in [n]}$ and returning an estimate $\hat p$ for $p$ with the following properties.

\begin{theorem}[Upper Bound]\label{thm:upperbound}
For any $\epsilon \in (0,1/100]$, $\alpha \in (0,1]$, if  $n \geq \frac{4d}{\epsilon^2\ln(e/\epsilon)}$, Algorithm \ref{alg:estimation_proc} runs in polynomial time in all parameters and its estimate $\widehat{p}$ satisfies $||\widehat{p}-p||_1 \lesssim \frac{\epsilon}{\alpha}\sqrt{\frac{d\ln(1/\epsilon)}{k}}$ w.p. at least $1-O(e^{-d})$.
\end{theorem}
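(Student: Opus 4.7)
The natural per-batch statistic is $\bar{Z}^b = \tfrac{1}{k}\sum_{l=1}^k Z_l^b \in [0,1]^d$, whose expectation is $\mu := \mathbb{E}[Z_l^b] = \lambda\mathbf{1}_d + \beta p$ with $\beta := 1-2\lambda = \tanh(\alpha/4)$, satisfying $\beta \asymp \alpha$ for $\alpha \in (0,1]$. Inverting, $p = \beta^{-1}(\mu - \lambda\mathbf{1}_d)$, so given any estimator $\hat{\mu}$, the $\ell_1$-projection $\hat p$ of $\beta^{-1}(\hat\mu - \lambda\mathbf{1}_d)$ onto $\mathcal{P}_d$ obeys
\[
\|\hat p - p\|_1 \leq \beta^{-1}\|\hat\mu - \mu\|_1 \leq \beta^{-1}\sqrt{d}\,\|\hat\mu - \mu\|_2.
\]
It therefore suffices to produce, from the $\epsilon$-corrupted family $\{\bar Z^b\}_{b=1}^n$, an estimator $\hat\mu$ with $\|\hat\mu - \mu\|_2 \lesssim \epsilon\sqrt{\log(1/\epsilon)/k}$ with probability $1 - O(e^{-d})$.

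\textbf{Properties of clean batch means.} For $b \in B_G$ the $\bar Z^b$ are i.i.d.\ with mean $\mu$ and covariance $\Sigma/k$, where a direct computation gives $\Sigma_{ii} = q_i(1-q_i)$ with $q_i = \lambda + \beta p_i$ and $\Sigma_{ij} = -\beta^2 p_i p_j$ for $i \neq j$. Writing $\Sigma = \mathrm{diag}(q_i(1-q_i) + \beta^2 p_i^2) - \beta^2 pp^\top$ and using $\|p\|_2^2 \leq 1$ one checks $\|\Sigma\|_{\mathrm{op}} \leq 1$, hence $\|\mathrm{Cov}(\bar Z^b)\|_{\mathrm{op}} \leq 1/k$. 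Moreover, each coordinate $Z_l^b(j)$ is Bernoulli; Hoeffding's inequality thus implies that every one-dimensional projection $\langle v, \bar Z^b - \mu\rangle$ with $\|v\|_2 \leq 1$ is sub-Gaussian with proxy $O(1/\sqrt k)$ on the relevant deviation scale.

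\textbf{Filter-based robust mean estimation.} Following the blueprint of \cite{jain2020optimal,jain2021robust} adapted to the variance scale $1/k$, Algorithm~\ref{alg:estimation_proc} iteratively (i) computes the top eigenpair $(\hat\Lambda, \hat v)$ of the empirical covariance of the currently kept batches, (ii) if $\hat\Lambda$ exceeds a fixed constant multiple of $1/k$, discards or down-weights batches with the largest scores $\langle \hat v, \bar Z^b - \overline{\bar Z}\rangle^2$, and (iii) terminates once the spectrum has collapsed, outputting the mean of the surviving batches. The correctness rests on the following \emph{resilience condition}: with probability $\geq 1 - O(e^{-d})$, whenever $n \geq 4d/(\epsilon^2\log(e/\epsilon))$, every subset $S \subseteq B_G$ with $|S| \geq (1-5\epsilon)n$ satisfies
\[
\Bigl\|\tfrac{1}{|S|}\sum_{b\in S}\bar Z^b - \mu\Bigr\|_2 \lesssim \epsilon\sqrt{\tfrac{\log(1/\epsilon)}{k}}, \qquad \Bigl\|\tfrac{1}{|S|}\sum_{b\in S}(\bar Z^b - \mu)(\bar Z^b - \mu)^\top - \tfrac{\Sigma}{k}\Bigr\|_{\mathrm{op}} \lesssim \tfrac{\epsilon\log(1/\epsilon)}{k}.
\]
Standard filter analysis then shows that the procedure terminates in $\mathrm{poly}(n,d)$ time, never removes more than $O(\epsilon n)$ clean batches in excess of the adversarial ones, and outputs $\hat\mu$ with $\|\hat\mu - \mu\|_2 \lesssim \epsilon\sqrt{\log(1/\epsilon)/k}$. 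Combining this with the reduction above delivers the announced rate $\|\hat p - p\|_1 \lesssim (\epsilon/\alpha)\sqrt{d\log(1/\epsilon)/k}$.

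\textbf{Main obstacle.} The technical heart of the proof is the resilience property, especially the operator-norm concentration of the empirical covariance of the clean batches. Because $\|Z_l^b\|_2$ can reach $\sqrt d$, a straightforward matrix-Bernstein argument would force an $n$ quadratic in $1/\epsilon^2$; obtaining the stated threshold $n \gtrsim d/(\epsilon^2\log(e/\epsilon))$ requires exploiting the Bernoulli coordinate structure via truncation of rare high-norm events, a chaining or $\epsilon$-net argument over unit $\ell_2$-directions for the mean resilience, and a matrix-Bernstein bound tailored to the bounded-coordinate setting for the covariance. The additional $\sqrt{\log(1/\epsilon)}$ factor in the final rate is the usual computational overhead of spectral filtering methods.
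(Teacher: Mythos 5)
Your reduction to $\ell_2$ robust mean estimation is a genuinely different route from the paper's. The paper never estimates $\mu$ in $\ell_2$: it controls $\sup_{S\subseteq[d]}|\hatq(S)-q(S)|$ directly (which equals $\|\hatq-q\|_1$ up to a factor $2$ by Lemma~\ref{lem:norm_equal_sup_sets}), and makes the resulting $\max$ over $2^{2d}$ pairs of sets tractable through a Grothendieck-type SDP relaxation (Lemma~\ref{lem:Groetendickapprox}) rather than through a top-eigenvector computation. Your sub-Gaussian observation is correct: $\langle v,\bar Z^b-\mu\rangle$ has variance proxy $\|v\|_2^2/(4k)$ by Hoeffding because the coordinates of $Z_l^b$ are independent Bernoullis conditionally on $X_l^b$; this is a clean and legitimate alternative way to put the factor $\sqrt d$ into the Cauchy--Schwarz step rather than into the per-set variance as the paper does. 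Both routes plausibly give the same rate.

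However, your filter as written does \emph{not} give the claimed rate, and the missing ingredient is exactly the technical heart of the paper's proof. You threshold on ``$\hat\Lambda$ exceeds a fixed constant multiple of $1/k$.'' That is the bounded-second-moment filter, and it only certifies that the outliers have added at most $O(1/k)$ of spurious variance in the worst direction; this bounds the induced mean shift by $O(\sqrt{\epsilon\cdot(1/k)})=O(\sqrt{\epsilon/k})$, which after the $\beta^{-1}\sqrt d$ conversion is worse than the target $\tfrac{\epsilon}{\alpha}\sqrt{d\log(1/\epsilon)/k}$ by a factor $\asymp (\epsilon\log(1/\epsilon))^{-1/2}$. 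To get the $\epsilon\sqrt{\log(1/\epsilon)}$ rate one must compare the empirical second-moment matrix of the retained batches to the \emph{true} covariance $\Sigma(p)/k$ and flag a deviation of order $\epsilon\log(1/\epsilon)/k$, which is tiny relative to $1/k$. But $\Sigma(p)$ depends on the unknown $p$. The paper resolves this by comparing $\hatC(B')$ to the plug-in $\C(\hatq_{B'})$ and proving that $q\mapsto\cov_{S,S'}(q)$ is Lipschitz (Lemma~\ref{lem:C_lip}), so the plug-in error is of the same small order; Lemma~\ref{lem:vartoerror} then converts the certified covariance gap into a mean-error bound, and Lemma~\ref{lem:goodvsbad} shows that if the gap is large then the deletion step hits adversarial batches with probability $\geq 3/4$. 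Without this plug-in-plus-Lipschitz mechanism (or an equivalent one), your filter analysis stalls at the $\sqrt{\epsilon/k}$ rate, and ``standard filter analysis'' cannot be invoked because the standard $\epsilon\sqrt{\log(1/\epsilon)}$ guarantee for sub-Gaussian data presumes a known reference covariance. The concentration difficulty you flag at the end (operator-norm deviation of the empirical covariance) is real but secondary; the primary gap is the absence of the bootstrapped covariance target inside the filter itself.
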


If $n\geq O(d)$, then there exists $\epsilon' \in (0,1/100]$ s.t. $n = \frac{4d}{(\epsilon')^2\ln(1/\epsilon')}$. Running the algorithm with that parameter $\epsilon'$ rather than the true $\epsilon$ gives the following result.
\begin{corollary}\label{cor:rate_priv_alone}
If $n \geq O(d)$, then the algorithm's estimate satisfies $||\widehat{p}-p||_1 \lesssim \frac{d}{\alpha}\sqrt{\frac{e}{nk}}$ with probability at least $1-O(e^{-d})$.
\end{corollary}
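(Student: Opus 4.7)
The strategy is to invoke Theorem~\ref{thm:upperbound} with an effective contamination parameter $\epsilon'$ chosen so that the sample-size hypothesis $n \geq \frac{4d}{(\epsilon')^2\ln(e/\epsilon')}$ becomes an equality, and then to simplify the resulting upper bound using the algebraic identity this equality provides. The algorithm is run with $\epsilon'$ in place of the true contamination level, which is a safe choice since feeding the algorithm a larger contamination budget can only make it more conservative.

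First I would establish the existence of such an $\epsilon'$. Consider the map $f\colon(0,1/100] \to \mathbb{R}$ defined by $f(\epsilon) = \frac{4d}{\epsilon^2\ln(e/\epsilon)}$. This map is continuous and strictly decreasing on $(0,1/100]$ (since $\epsilon \mapsto \epsilon^2 \ln(e/\epsilon)$ is increasing on that interval), satisfies $f(\epsilon) \to +\infty$ as $\epsilon \to 0^+$, and takes the value $f(1/100) = \frac{4 \cdot 10^4 \, d}{\ln(100 e)} = O(d)$. By the intermediate value theorem, for every $n$ exceeding this $O(d)$ threshold there is a unique $\epsilon' \in (0,1/100]$ with $n = f(\epsilon')$; equivalently, $(\epsilon')^2 \ln(e/\epsilon') = 4d/n$.

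Next I would apply Theorem~\ref{thm:upperbound} with this $\epsilon'$, which yields, with probability at least $1-O(e^{-d})$,
$$\|\widehat{p}-p\|_1 \;\lesssim\; \frac{\epsilon'}{\alpha}\sqrt{\frac{d\ln(1/\epsilon')}{k}}.$$
Squaring and substituting $(\epsilon')^2 = \frac{4d}{n \ln(e/\epsilon')}$ gives
$$\frac{(\epsilon')^2 \, d \ln(1/\epsilon')}{\alpha^2 k} \;=\; \frac{4 d^2 \ln(1/\epsilon')}{\alpha^2 n k \ln(e/\epsilon')} \;\leq\; \frac{4 d^2}{\alpha^2 n k},$$
where the inequality uses $\ln(1/\epsilon') \leq \ln(e/\epsilon')$. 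Taking square roots delivers $\|\widehat{p}-p\|_1 \lesssim \frac{d}{\alpha}\sqrt{\frac{1}{nk}} \leq \frac{d}{\alpha}\sqrt{\frac{e}{nk}}$, as claimed.

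There is no real obstacle here; the whole argument is a tuning trick plus two lines of algebra. The only point requiring any care is the monotonicity and limit check needed to justify the existence of $\epsilon'$, but this is straightforward.
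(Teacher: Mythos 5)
Your argument is correct and follows the same route as the paper's (very brief) proof: tune an effective contamination level $\epsilon'$ so that the sample-size hypothesis of Theorem~\ref{thm:upperbound} is saturated, run the algorithm with $\epsilon'$, and substitute back. You add a careful existence check for $\epsilon'$ (monotonicity of $\epsilon\mapsto\epsilon^2\ln(e/\epsilon)$, divergence as $\epsilon\to 0^+$, and the value at $\epsilon=1/100$), which the paper leaves implicit, and you use $\ln(e/\epsilon')$ consistently with the hypothesis of Theorem~\ref{thm:upperbound}, whereas the paper states the tuning equation with $\ln(1/\epsilon')$; since $\ln(e/\epsilon')\geq\ln(1/\epsilon')$, both choices satisfy the theorem's hypothesis, but yours makes the algebra slightly cleaner. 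One point both you and the paper leave unproven is the assertion that feeding the algorithm a value $\epsilon'$ larger than the true contamination level is safe; this is plausible because the guarantees in the analysis only depend on $|B'_G|\geq(1-2\epsilon')|B_G|$ and the nice-properties lemma holding for the larger $\epsilon'$, but neither proof spells it out.
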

Theorem \ref{thm:upperbound} and Corollary \ref{cor:rate_priv_alone} yield the upper bound. %Note that the upper bound holds only if $n\geq O(d)$. 
We have not seen the regime $d \leq n$ explored in the literature, even with robustness only. This would be an interesting research direction for future work. Note that for the estimate $\hat{p}$ given by Algorithm \ref{alg:estimation_proc} we can have $\|\hat p\|_1 \neq 1$. The next corollary, proved in Appendix \ref{proof_cor_actual_estim}, states that normalizing $\hat{p}$ yields an estimator in $\mathcal{P}_d$ with the same estimation guarantees as in Theorem \ref{thm:upperbound}.
\begin{corollary}\label{cor:true_estim}
Let the assumptions of Theorem \ref{thm:upperbound} be satisfied and let $\hat p$ denote the output of Algorithm \ref{alg:estimation_proc}. Define $\widehat p^* = \frac{\widehat p}{\|\hat p\|_1}$, then $||\widehat{p}^*-p||_1 \lesssim \frac{\epsilon}{\alpha}\sqrt{\frac{d\ln(1/\epsilon)}{k}}$ holds with probability at least $1-O(e^{-d})$.
\end{corollary}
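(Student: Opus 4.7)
\textbf{Proof plan for Corollary \ref{cor:true_estim}.} The plan is to reduce the statement to a short computation using only the triangle inequality, Theorem~\ref{thm:upperbound}, and the fact that $p$ is a probability vector. Let $r = \|\hat p - p\|_1$ denote the error bound from Theorem~\ref{thm:upperbound}, so that with probability at least $1-O(e^{-d})$ we have $r \lesssim \frac{\epsilon}{\alpha}\sqrt{d\ln(1/\epsilon)/k}$. Since $p \in \mathcal{P}_d$ we have $\|p\|_1 = 1$, so the reverse triangle inequality for the $\ell_1$ norm gives
\begin{equation*}
    \bigl|\|\hat p\|_1 - 1\bigr| \;=\; \bigl|\|\hat p\|_1 - \|p\|_1\bigr| \;\leq\; \|\hat p - p\|_1 \;=\; r.
\end{equation*}
In particular, on the high-probability event where $r$ is small (say $r \leq 1/2$, which holds once $\epsilon/\alpha$ times the relevant factor is a small absolute constant, a regime covered by $\lesssim$), we obtain $\|\hat p\|_1 \geq 1 - r > 0$, so that the normalization $\hat p^* = \hat p/\|\hat p\|_1$ is well defined.

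Next I would decompose
\begin{equation*}
    \hat p^* - p \;=\; \bigl(\hat p^* - \hat p\bigr) \;+\; \bigl(\hat p - p\bigr),
\end{equation*}
and bound the two terms separately. For the first term, factor out $\hat p$:
\begin{equation*}
    \|\hat p^* - \hat p\|_1 \;=\; \left\|\hat p\left(\frac{1}{\|\hat p\|_1} - 1\right)\right\|_1 \;=\; \|\hat p\|_1 \cdot \left|\frac{1}{\|\hat p\|_1} - 1\right| \;=\; \bigl|1 - \|\hat p\|_1\bigr| \;\leq\; r,
\end{equation*}
where the last inequality uses the reverse triangle inequality displayed above. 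For the second term, we have $\|\hat p - p\|_1 = r$ directly. Combining via the triangle inequality yields $\|\hat p^* - p\|_1 \leq 2r$, and invoking Theorem~\ref{thm:upperbound} to plug in the bound on $r$ concludes the proof, with the factor of $2$ absorbed into the $\lesssim$ constant.

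There is no real obstacle here: the argument is a three-line manipulation, and the only point that requires a moment of care is checking that $\|\hat p\|_1 > 0$ on the relevant event so that $\hat p^*$ is actually defined, which follows automatically from the smallness of $r$ in the regime of parameters considered.
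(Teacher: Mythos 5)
Your argument matches the paper's proof essentially line for line: decompose $\hat p^* - p = (\hat p^* - \hat p) + (\hat p - p)$, observe $\|\hat p^* - \hat p\|_1 = |1-\|\hat p\|_1| \leq \|\hat p - p\|_1$, and combine to get a factor of $2$. The only addition is your (reasonable but unnecessary for the $\lesssim$ conclusion) check that $\|\hat p\|_1>0$, which the paper omits.
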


\subsection{Description of the algorithm}

We now give a high level description of our algorithm. It is based on algorithms for robust discrete distribution estimation, \cite{jain2020optimal,jain2021robust}. For each $S \subseteq[d]$, define $q(S)=\sum\limits_{j \in S}q_j$ and $p(S)=\sum\limits_{j \in S}p_j$. The quantities $\hatq$, $\widehat{p}$ will respectively denote the estimators of $p$ and $q$. Recalling that $TV(p,\widehat p) = \sup_{S \subseteq [d]}\big|p(S) - \widehat{p}(S)\big|$, we aim at finding $\widehat{p}$ satisfying $\big|p(S) - \widehat{p}(S)\big| \lesssim \frac{\epsilon}{\alpha}\sqrt{\frac{d\ln(1/\epsilon)}{k}}$ for all $S \subseteq[d]$. To this end, it is natural to first estimate the auxiliary quantity
$$ q(j) := \mathds{E}_p\big[Z(j)\big|\,Z \text{ is a good sample}\big] ~~ \text{ for all } j \in [d],$$
which is linked with $p(j)$ through the formula $p(j) = \frac{q(j)-1}{1-2\lambda}$. 
Our algorithm therefore first focuses on robustly estimating $q$ and outputs $\widehat{p} = \frac{\mub-\mathds{1}}{1-2\lambda}$.
If there were no outliers, we would estimate $q(j)$ by $ \frac{1}{nk} \sum_{b \in [n]} \sum_{l \in [k]} Z_l^{b}(j)$. 
In the presence of outliers, our algorithm iteratively deletes the batches that are likely to be contaminated, and returns the empirical mean of the remaining data. 
More precisely, at each iteration, the current collection of remaining batches $B'$ is processed as follows:
\begin{enumerate}
    \item Compute the \textit{contamination rate} $\sqrt{\tau_{B'}}$ (defined in equation \ref{eq:defcontaminationrate}) of the collection $B'$. If $\sqrt{\tau_{B'}}\leq 200$, return the empirical mean of the elements in $B'$.
    \item If $\sqrt{\tau_{B'}}\geq 200$, compute the \textit{corruption score} $\varepsilon_b$ (defined in equation \ref{eq:corruptionscore}) of each batch $b \in B'$. Select the subset $B^o$ of the $n\epsilon$ batches of $B'$ with top corruption scores. Iteratively delete one batch in $B^o$: at each step, choose a batch $b$ with probability proportional to $\varepsilon_b$, until the sum of all $\varepsilon_b$ in $B^o$ has been halved.
\end{enumerate}

At a high level,  \textit{contamination rate} $\tau_{B'}$ quantifies how many adversarial batches remain in the current collection $B'$. The \textit{corruption score} $\varepsilon_b$ quantifies how likely it is for batch~$b$ to be an outlier. Both the \textit{contamination rate} and the \textit{corruption scores} can be computed in polynomial time (see Remark \ref{rem:polynomial_time}). The algorithm therefore terminates in polynomial time, as it removes at least one batch per iteration. We give its pseudo-code below.

\begin{algorithm2e}[ht]\label{alg:estimation_proc}
\DontPrintSemicolon
\SetKwInOut{Input}{input}
\Input{Corruption level $\epsilon$, Batch collection $B$}
$B'\leftarrow B$\;
	\While{ contamination rate of $B', \ \sqrt{\tau_{B'}}\geq 200$}{
	$\forall b \in B'$ compute corruption score $\varepsilon_b$\;
	$B^o\leftarrow\{\text{$\epsilon|B|$ Batches with top corruption scores}\}$\;
	$\epsilon_{\text{tot}}= \sum_{b \in B^o}\varepsilon_b$\;
	\While{$\sum_{b \in B^o}\varepsilon_b \geq \epsilon_{\text{tot}}/2$}{
	Delete a batch from $B^o$, picking batch $b$ with probability proportional to $ \varepsilon_b$
	}
	}
	$\hatq_{B'} = \frac{1}{|B'|} \sum_{b \in B'} \sum_{l=1}^k Z_l^b$ and $\hat{p} = \frac{\mub-\mathds{1}}{1-2\lambda}$\\
\SetKwInOut{Output}{output}
\Output{Estimation $\hat{p}$}
	\caption{\robustsubroutine \label{alg:robustestimator}}
\end{algorithm2e}

We now give a high level description of our algorithm's theoretical guarantees. Recall that $B_G$ denotes the set of non-contaminated batches and $B_A$ the set of adversarial batches. Throughout the paper, for any collection of batches $B'\subseteq[d]$, we will use the following shorthands:
$$
B'_G = B' \cap B_G \text{\ and \ } B'_A= B'\cap B_A.
$$
Assume that $n \geq O\left(\frac{d}{\epsilon^2\log(e/\epsilon)}\right)$.
\begin{itemize}
    \item In Lemma \ref{lem:goodvsbad}, we show that each deletion step has a probability at least $3/4$ of removing an adversarial batch. By a direct Chernoff bound, there is only a probability $\leq O(e^{-\epsilon |B]})\leq O(e^{-d})$ of removing more than $2\epsilon|B_G|$ clean batches before having removed all the corrupted batches. In other words, our algorithm keeps at least $(1-2\epsilon)n$ of the good batches with high probability.
    \item As proved in equations \ref{eq:corruptionscoretoerror} and \ref{eq:boundtauBG}, as soon as a subset $B'$ contains at least $(1-2\epsilon)n$ good batches, it holds with probability $\geq 1 - O(e^{-d})$ that for all $S \subseteq[d]$
    \begin{align}\label{eq:contaminationtoerror}
    \begin{cases}
        |\hatq_{B'}(S)-q(S)| \lesssim  \left(1+\sqrt{\tau_{B'}}\right)\epsilon\sqrt{\frac{d\ln(e/\epsilon)}{k}}, & \text{(i)}\\
    \sqrt{\tau_{B'_G}} \leq 200. & \text{(ii)}
    \end{cases}
    \end{align}
    There are two cases. If the algorithm has eliminated all the outliers, then it has kept at least $(1-2\epsilon)n$ clean batches with probability $1-O(e^{-d})$. Then condition (i) $\sqrt{\tau_{B'}} = \sqrt{\tau_{B'_G}} \leq 200$ ensures that the algorithm terminates. 
    Otherwise, the algorithm stops before removing all of the outliers, but in this case, the termination condition guarantees that $\sqrt{\tau_{B'}} \leq 200$. In both cases, condition (ii) yields that the associated estimator $\hatq := \hatq_{B_{\textbf{out}}}$ has an estimation error satisfying $\sup\limits_{S \subseteq[d]} |\hatq(S) - q(S)| \lesssim \epsilon\sqrt{\frac{d\ln(e/\epsilon)}{k}}$ with probability $\geq 1 - O(e^{-d})$.
    \item Finally, we link the estimation error of $\hatq$ to that of $\hat p$
    \begin{align*}
    ||\widehat{p}-p||_1 & \leq 2\max_{S \in [d]} |\widehat{p}(S)-p(S)|~~~~ \text{ (see Lemma \ref{lem:norm_equal_sup_sets})}\\
    &\leq2\max_{S \subseteq [d]} \Big|\sum_{j \in S}\frac{1}{1-2 \lambda}\left(\hat{q}_j-1\right)-\frac{1}{1-2 \lambda}\left(q_j-1\right)\Big|\\
    &\leq \frac{1}{1-2 \lambda}\max_{S \in [d]} |\hat{q}(S)-q(S)|\leq \frac{5}{\alpha}\max_{S \in [d]} |\hat{q}(S)-q(S)|\\
    & \lesssim \frac{\epsilon}{\alpha}\sqrt{\frac{d\ln(e/\epsilon)}{k}} ~~ \text{ with probability } \geq 1 - O(e^{-d}),
\end{align*}
which yields the estimation guarantee over $\widehat{p}$ and proves Theorem \ref{thm:upperbound}.
\end{itemize}

We now move to the formal definitions of the quantities involved in the algorithm and state all the technical results mentioned.

\subsection{Technical results}
\textit{Wlog}, assume that $6\epsilon\sqrt{\frac{d\ln(e/\epsilon)}{k}}\leq1$. Otherwise the upper bound of the theorem is clear. For any set $S \subseteq[d]$ and any observation $Z^b_i$, we define the empirical weight of $S$ in $Z^b_i$ as $Z^b_i(S) := \sum\limits_{j \in S} Z^b_i(j)$. This quantity is an estimator of $q(S)$. For each batch $Z^b$ and each collection of batches $B' \subseteq B$, we aggregate these estimators by building
$$
\mub_b(S) := \frac{1}{k}\sum_{i=1}^k Z^{b}_i(S) \text{\ \ and \ \ } \hatq_{B'}(S):= \frac{1}{|B'|}\sum_{b\in B'}\mub_b(S).
$$
Our goal is to remove batches $Z^b$ that do not satisfy some concentration properties verified by clean batches. To this end, we introduce empirical estimators of the second order moment:
\begin{align}
    \widehat{\text{Cov}}^{B'}_{S,S'}\left(b\right) &:= \Big[\mub_b(S)-\hatq_{B'}(S)\Big]\Big[\mub_b(S')-\hatq_{B'}(S')\Big] \label{def_cov_b}\\
    \widehat{\text{Cov}}_{S,S'}\left(B'\right)&:= \frac{1}{|B'|}\sum_{b\in B'} \widehat{\text{Cov}}^{B'}_{S,S'}\left(b\right). \label{def_cov_B'}
\end{align}

In Appendix \ref{app:matrixexpression}, we give the expression of matrix $\cov_{S,S'}\left(q\right)$ s.t.
$$
\text{Cov}_{S,S'}\left(q\right)
= \mathbb{E}\left[\widehat{\text{Cov}}_{S,S'}\left(B'\right)\right].$$

We are now ready to define the essential concentration properties satisfied by the clean batches with high probability (see Lemma \ref{lem:essentialproperties}).

%For each set $S\subset [d]$ and each collection of batches $B'$, define the empirical covariance of $q(S)$ and $q(S')$ as

 \begin{definition}[Nice properties of good batches]\label{def:nice_properties}\\
%1. The median of the estimates $\left\{\bar{\mu}_{b}(S): b \in B\right\}$ is not too far from $p(S)$.
%$$
%|\operatorname{med}(\bar{\mu}(S))-q(S)| \leq \sqrt{d\ln (6) / n} .
%$$
1. For all $S \subseteq [d]$, all sub-collections $B_{G}^{\prime} \subseteq B_{G}$ of good batches of size $\left|B_{G}^{\prime}\right| \geq(1-2\epsilon  )\left|B_{G}\right|$,
\begin{align}
    \left|\hatq_{B_{G}^{\prime}}(S)-q(S)\right| &\leq 6\epsilon \sqrt{\frac{d\ln ( e / \epsilon)}{k}}, \label{concent_first_moment}\\
    \left|\widehat{\text{Cov}}_{S,S'}\left(B'_G\right)-\text{Cov}_{S,S'}(\hatq_{B'_G})\right| &\leq \frac{250 d\epsilon \ln \left(\frac{ e}{\epsilon}\right)}{k}. \label{concent_second_moment}
\end{align}

2. For all $S, S' \subseteq [d]$, for any sub collection of good batches $B_G^{''}$ s.t. $|B_G^{''}|\leq \epsilon |B_G|$, 
$$
\sum_{b\in B_G^{''}}\Big[\mub_b(S)-q(S)\Big]\Big[\mub_b(S')-q(S')\Big]\leq \frac{33\epsilon d |B_G| \ln ( e / \epsilon)}{k}.
$$
\end{definition}

\begin{lemma}[Nice properties of good batches]\label{lem:essentialproperties}
If $|B_G|\geq \frac{3d}{\epsilon^2\ln(e/\epsilon)}$, the nice properties of the good batches hold with probability $1-10e^{-d}$.
\end{lemma}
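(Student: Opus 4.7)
The plan is to prove all three properties via a common template: fix the sets $S,S'\subseteq [d]$ and the relevant sub-collection, use sub-Gaussian/Bernstein concentration for the corresponding sum over batches, and then take a union bound over $(S,S')$ (factor $\leq 4^d$) and over the sub-collection (factor $\leq (e/\epsilon)^{2\epsilon|B_G|}$). The hypothesis $|B_G|\geq 3d/(\epsilon^2\ln(e/\epsilon))$ is calibrated so that the resulting concentration exponent $\asymp \epsilon|B_G|\ln(e/\epsilon)$ dominates both log-union-bound factors $\asymp d$ and $\asymp \epsilon|B_G|\ln(e/\epsilon)$, leaving an overall failure probability $O(e^{-d})$.

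The key single-batch estimate is that, for every $S\subseteq [d]$, $\mub_b(S) - q(S) = \frac{1}{k}\sum_{i=1}^k(Z_i^b(S)-q(S))$ is sub-Gaussian at scale $O(\sqrt{d/k})$. To see this, condition on $X_i^b$: conditionally, $Z_i^b(S)=\sum_{j\in S}Z_i^b(j)$ is a sum of $|S|\leq d$ independent Bernoulli flips and is hence sub-Gaussian at scale $\sqrt{|S|}/2 \leq \sqrt{d}/2$, while the conditional mean $\mathbb{E}[Z_i^b(S)\mid X_i^b]-q(S)=(1-2\lambda)(\mathds{1}_{X_i^b\in S}-p(S))$ is bounded by $1-2\lambda\leq 1$. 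Combining both pieces via the tower formula for the MGF yields a sub-Gaussian parameter $O(\sqrt{d})$ for $Z_i^b(S)-q(S)$, hence $O(\sqrt{d/k})$ for $\mub_b(S)-q(S)$ after averaging over $k$ independent draws. For Property~1, this implies that $\sum_{b\in T}(\mub_b(S)-q(S))$ is sub-Gaussian at scale $O(\sqrt{|T|d/k})$ for any deterministic $T\subseteq B_G$. Choosing $t=c\epsilon|B_G|\sqrt{d\ln(e/\epsilon)/k}$ and union-bounding over all $S\subseteq [d]$ and all $T\subseteq B_G$ of size $\leq 2\epsilon|B_G|$ produces, with probability $\geq 1-O(e^{-d})$, the uniform bound $|\sum_{b\in T}(\mub_b(S)-q(S))|\leq c\epsilon|B_G|\sqrt{d\ln(e/\epsilon)/k}$. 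Writing $\hatq_{B'_G}(S)-q(S) = \frac{|B_G|}{|B'_G|}(\hatq_{B_G}(S)-q(S)) - \frac{1}{|B'_G|}\sum_{b\in B_G\setminus B'_G}(\mub_b(S)-q(S))$ and using $|B'_G|/|B_G|\geq 1-2\epsilon$ then yields \eqref{concent_first_moment} with a suitable absolute constant.

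Property~3 follows by the same recipe, now applied to the products $Y_b=(\mub_b(S)-q(S))(\mub_b(S')-q(S'))$, which are sub-exponential at scale $O(d/k)$ as products of two sub-Gaussians of scale $O(\sqrt{d/k})$. Bernstein's inequality applied to $\sum_{b\in B_G''}Y_b$ for a fixed $B_G''\subseteq B_G$ of size $\leq \epsilon|B_G|$, followed by a union bound over $(S,S')$ and over $B_G''$, gives the stated bound. Property~2 is obtained by expanding $(\mub_b(S)-\hatq_{B'_G}(S))(\mub_b(S')-\hatq_{B'_G}(S'))$ around $(q(S),q(S'))$: the iid average of $(\mub_b(S)-q(S))(\mub_b(S')-q(S'))$ concentrates around $\text{Cov}_{S,S'}(q)$ by the same Bernstein + union bound as in Property~3 (made uniform over sub-collections $B'_G$ of size $\geq (1-2\epsilon)|B_G|$), the discrepancy $\text{Cov}_{S,S'}(q)-\text{Cov}_{S,S'}(\hatq_{B'_G})$ is a Lipschitz function of $\hatq_{B'_G}-q$ controlled by Property~1, and the remaining cross terms of the form $(\hatq_{B'_G}(S)-q(S))(\hatq_{B'_G}(S')-q(S'))$ are also controlled by Property~1. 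The main technical obstacle is the tight matching between concentration exponent and union-bound factor for the quadratic Properties 2 and 3: one needs Bernstein's variance term (of order $\epsilon|B_G|(d/k)^2$ on a sub-collection of size $\leq \epsilon|B_G|$) to still dominate at the relevant deviation scale $\epsilon d|B_G|\ln(e/\epsilon)/k$, so that the exponent stays $\asymp \epsilon|B_G|\ln(e/\epsilon)$ rather than degrading into the sub-exponential regime; the size hypothesis on $|B_G|$ is precisely what makes this possible. Summing the failure probabilities for the three properties then yields the stated $10e^{-d}$.
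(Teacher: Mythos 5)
Your proposal is correct and follows essentially the same route as the paper's proof in Appendix~\ref{app:essentialproperties}: derive the single-batch sub-Gaussian scale $O(\sqrt{d/k})$ for $\mub_b(S)-q(S)$ by conditioning on $X_i^b$, apply Hoeffding for the first-moment bound and Bernstein for the quadratic quantities, decompose $B'_G$ as $B_G$ minus a small sub-collection, and then union-bound over the $2^{2d}$ choices of $(S,S')$ and over the $\leq e^{O(\epsilon|B_G|\ln(e/\epsilon))}$ small sub-collections, using the hypothesis on $|B_G|$ to make the exponent dominate both union-bound costs. Your expansion of $\widehat{\mathrm{Cov}}_{S,S'}(B'_G)-\mathrm{Cov}_{S,S'}(\hatq_{B'_G})$ into the centered Bernstein term, the Lipschitz discrepancy of the covariance (Lemma~\ref{lem:C_lip}), and the cross terms is exactly the decomposition used in the paper.
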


The proof is very similar to that of Lemma 3 in \cite{jain2020optimal}, and can be found in Appendix \ref{app:essentialproperties} where we clarify which technical elements change. \\

In the case where $S'=S$, we use the shorthands $\hatcov_{S,S}(B')=\hatvar_{S}(B')$ and $\cov_{S,S}(B')=\var_{S}(B')$. The following Lemma states that the quality of estimator $\hatq_{B'}$ is controlled by the concentration of $\left|\hatvar_{S}(B')-\var_{S}(\hatq)\right|$. 

\begin{lemma}[Variance gap to estimation error]\label{lem:vartoerror}
If conditions 1 and 2 hold and $\max_{S \subseteq [d]}|\hatq_{B'}(S)-q(S)|\leq 11$, then for any subset $B'$ s.t. $|B'_G|\geq (1-2\epsilon)|B_G|$ and for any $S\subset[d]$, we have:
$$
|\hatq_{B'}(S)-q(S)|\leq 28\epsilon \sqrt{\frac{d\ln(6e/\epsilon)}{k}}+2\sqrt{\epsilon\left|\hatvar_{S}(B')-\var_{S}(\hatq)\right|}.
$$
\end{lemma}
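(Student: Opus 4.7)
The strategy is to decompose the error $\Delta := \hatq_{B'}(S)-q(S)$ into a good-batch term and an adversarial-batch term, and to control each separately using Conditions 1--2 plus Cauchy--Schwarz. Starting from
$$
|B'|\,\Delta \;=\; |B'_G|\bigl(\hatq_{B'_G}(S)-q(S)\bigr) \;+\; \sum_{b\in B'_A}\bigl(\mub_b(S)-q(S)\bigr),
$$
the first summand is at most $6\epsilon\sqrt{d\ln(e/\epsilon)/k}$ in absolute value by Condition 1 (inequality \eqref{concent_first_moment}), which applies because $|B'_G|\geq(1-2\epsilon)|B_G|$.

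For the adversarial sum I would apply Cauchy--Schwarz,
$$
\Bigl|\sum_{b\in B'_A}(\mub_b(S)-q(S))\Bigr| \;\leq\; \sqrt{|B'_A|}\;\sqrt{\sum_{b\in B'_A}(\mub_b(S)-q(S))^2},
$$
and bound the inner sum by extending to all of $B'$ and invoking the Pythagorean identity
$$
\sum_{b\in B'}\bigl(\mub_b(S)-q(S)\bigr)^2 \;=\; |B'|\,\hatvar_S(B') + |B'|\,\Delta^2.
$$
Combined with $|B'_A|/|B'|\leq 2\epsilon$ (an elementary consequence of $|B'_G|\geq(1-2\epsilon)|B_G|$ and $|B_A|\leq\epsilon n$) and the sub-additivity $\sqrt{a+b}\leq\sqrt{a}+\sqrt{b}$, this produces a self-bound of the form
$$
|\Delta| \;\leq\; 6\epsilon\sqrt{\tfrac{d\ln(e/\epsilon)}{k}} \;+\; \sqrt{2\epsilon}\,\sqrt{\hatvar_S(B')} \;+\; \sqrt{2\epsilon}\,|\Delta|.
$$
Since $\epsilon\leq 1/100$, $1-\sqrt{2\epsilon}$ is bounded away from zero, so I can divide and then split $\sqrt{\hatvar_S(B')}\leq\sqrt{\var_S(\hatq)}+\sqrt{|\hatvar_S(B')-\var_S(\hatq)|}$ to expose the gap advertised in the conclusion.

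The main obstacle is absorbing the leftover term $\sqrt{\epsilon\,\var_S(\hatq)}$ into the target $28\epsilon\sqrt{d\ln(6e/\epsilon)/k}$. A naive bound $\var_S(\hatq)\lesssim d/k$ only closes the gap when $\epsilon\ln(e/\epsilon)$ is not too small, so a sharper replacement is needed. The plan is to write $\var_S(\hatq)=\cov_{S,S}(\hatq_{B'})$ and, using Lipschitz continuity of $q\mapsto\cov_{S,S}(q)$ in its probability-vector argument together with the a-priori control $\max_S|\Delta|\leq 11$, replace it by $\cov_{S,S}(\hatq_{B'_G})$ up to an $O(|\Delta|)$ error. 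Condition 2 then allows replacing $\cov_{S,S}(\hatq_{B'_G})$ by $\hatvar_S(B'_G)$ at the price of $O(d\epsilon\ln(e/\epsilon)/k)$, which is of order $\delta^2/\epsilon$ and therefore contributes only $O(\delta)$ to the final bound after taking $\sqrt{\epsilon\cdot}$. This is the step where the $\ln(6e/\epsilon)$ in the conclusion (as opposed to $\ln(e/\epsilon)$) appears, presumably from union-bounding over the truncation/replacement carried out in this argument.
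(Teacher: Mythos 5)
Your first two moves coincide with the paper's: the split into a good-batch term controlled by Condition 1 (equation \eqref{concent_first_moment}) and an adversarial term handled by Cauchy--Schwarz, and the Pythagorean identity
$\frac{1}{|B'|}\sum_{b\in B'}(\mub_b(S)-q(S))^2 = \hatvar_S(B')+\big(q(S)-\hatq_{B'}(S)\big)^2$.
The divergence, and the genuine gap, is where you enlarge $\sum_{b\in B'_A}(\mub_b(S)-q(S))^2$ to $\sum_{b\in B'}(\mub_b(S)-q(S))^2$. That inequality is true, but it throws away exactly the cancellation the lemma needs. The paper instead keeps the identity
$\frac{1}{|B'|}\sum_{b\in B'_A}(\cdot)^2 = \frac{1}{|B'|}\sum_{b\in B'}(\cdot)^2 - \frac{1}{|B'|}\sum_{b\in B'_G}(\cdot)^2$
and then \emph{lower bounds} the subtracted good-batch sum using the second-moment concentration of Condition~1 (Lemma~\ref{lem:covgoodbatches}), which gives
$\frac{1}{|B'|}\sum_{b\in B'_G}(\mub_b(S)-q(S))^2 \geq \var_S(\hatq_{B'}) - O\!\left(\frac{\epsilon d\ln(e/\epsilon)}{k}\right)$.
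This subtraction is what makes the bulk of $\hatvar_S(B')$ disappear, leaving only the gap $\hatvar_S(B')-\var_S(\hatq_{B'})$, the $\Delta^2$ term (absorbed at the end since $\epsilon\leq 1/100$), and a term of size $O(\epsilon d\ln(e/\epsilon)/k)$ inside the square root, which after multiplying by $\sqrt{\epsilon}$ is of order $\epsilon\sqrt{d\ln(e/\epsilon)/k}$.

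Your plan cannot recover from dropping the subtraction. After extending to $B'$, your bound carries a leftover $\sqrt{2\epsilon\,\var_S(\hatq_{B'})}$. From \eqref{def_C}, $\var_S(\hatq)\approx \lambda(1-\lambda)|S|/k$ is of order $d/k$ for $|S|\asymp d$, so this leftover is $\Theta\!\big(\sqrt{\epsilon d/k}\big)$. The target is $\epsilon\sqrt{d\ln(e/\epsilon)/k}$, and the ratio is $\Theta\!\big(1/\sqrt{\epsilon\ln(e/\epsilon)}\big)\to\infty$ as $\epsilon\to 0$. No amount of Lipschitz adjustment or passing from $\cov_{S,S}(\hatq_{B'_G})$ to $\hatvar_S(B'_G)$ shrinks this, because $\hatvar_S(B'_G)$ is itself $\Theta(d/k)$ and never small: the variance of a clean batch average is genuinely of that order, and you have no mechanism to cancel it. The fix is exactly the paper's: do not enlarge $B'_A$ to $B'$, but subtract the good-batch second-moment sum and invoke \eqref{concent_second_moment} to replace it, up to a controlled $O(\epsilon d\ln(e/\epsilon)/k)$ error, by the population variance $\var_S(\hatq_{B'})$, which is the quantity that cancels against $\hatvar_S(B')$.
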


This Lemma is proved in Appendix \ref{app:vartoerror}. Together with equation \eqref{concent_second_moment}, this Lemma ensures that removing enough outliers yields an estimator $\hatq_{B'}$ with estimation guarantee $\sup\limits_{S \subseteq[d]} |\hatq_{B'}(S)-q(S)| \lesssim \epsilon \sqrt{\frac{d\ln ( 1 / \epsilon)}{k}}$.\\

The adversarial batch deletion is achieved by identifying the batches $Z^b$ for which $\widehat{\text{Cov}}^{B'}_{S,S'}\left(b\right)$ (defined in equation \eqref{def_cov_b}) is at odds with Definition \ref{def:nice_properties} for some $S, S'\subset[d]$. 
Searching through all possible $S, S' \subseteq[d]$ would yield an exponential-time algorithm. 
A way around this is to introduce a semi-definite program that can be approximated in polynomial time. 
To this end, we prove the next Lemma, stating that the quantities $\hatcov_{S,S'}\left(q\right)$ and  $\cov_{S,S'}\left(q\right)$ can be computed as scalar products of matrices.
\begin{lemma}[Matrix expression]\label{lem:matrixexpression}
Denote by $\mathds{1}_S$ the indicator vector of the elements in $S$. For each vector $q$, there exists a matrix $\C(\hatq)$ s.t. for any $S,S'\subseteq [d]$,
$$
\cov_{S,S'}\left(\hatq\right)= \left<\indS{S}\indS{S'}^T,\C(\hatq)\right>.
$$
$$
\widehat{\text{Cov}}^{B'}_{S,S'}\left(b\right)= \left<\indS{S}\indS{S'}^T,\hatC_{b,B'}\right> ~~ \text{ and } ~~ \widehat{\text{Cov}}_{S,S'}\left(B'\right)= \left<\indS{S}\indS{S'}^T,\hatC (B')\right>,
$$
with $\hatC (B')= \sum_{b \in B'}\hatC_{b,B'}$.
\end{lemma}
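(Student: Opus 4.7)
The plan is to realize each of the three covariance quantities as a Frobenius inner product $\langle \indS{S}\indS{S'}^T, M\rangle$ against a suitable matrix $M$, which immediately qualifies as the required $\hatC_{b,B'}$, $\hatC(B')$ or $\C(\hatq)$. The single enabling identity is $u^T A v = \langle u v^T, A\rangle_F$, valid for any matrix $A \in \R^{d\times d}$ and vectors $u,v \in \R^d$.

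For the two empirical quantities, I would view $\mub_b$ and $\hatq_{B'}$ as vectors of $\R^d$ with entries $\mub_b(j)$ and $\hatq_{B'}(j)$. Since $\mub_b(S) = \sum_{j \in S}\mub_b(j) = \indS{S}^T \mub_b$ and similarly $\hatq_{B'}(S) = \indS{S}^T \hatq_{B'}$, the definition \eqref{def_cov_b} rewrites immediately as
\begin{align*}
\hatcov^{B'}_{S,S'}(b) &= \bigl[\indS{S}^T(\mub_b - \hatq_{B'})\bigr]\bigl[\indS{S'}^T(\mub_b - \hatq_{B'})\bigr] \\
&= \indS{S}^T (\mub_b - \hatq_{B'})(\mub_b - \hatq_{B'})^T \indS{S'},
\end{align*}
which identifies the rank-one PSD matrix $\hatC_{b,B'} = (\mub_b - \hatq_{B'})(\mub_b - \hatq_{B'})^T$. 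Averaging over $b \in B'$ as in \eqref{def_cov_B'} and using the linearity of the Frobenius inner product then yields $\hatcov_{S,S'}(B') = \langle \indS{S}\indS{S'}^T, \hatC(B')\rangle$, where $\hatC(B')$ is the empirical average $\frac{1}{|B'|}\sum_{b\in B'}\hatC_{b,B'}$ (to be consistent with the $1/|B'|$ normalization in \eqref{def_cov_B'}).

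For the population piece, I would compute $\cov(Z(j), Z(j'))$ under the RAPPOR channel coordinatewise. Conditionally on the raw input $X$, the bits $Z(j)$ are independent Bernoullis with means $\lambda + (1-2\lambda)\mathds{1}_{X=j}$; a short second-moment calculation followed by averaging over $X \sim p$ yields an explicit $d\times d$ matrix $M(q)$ whose entries are polynomials in $\lambda$ and $(q(j))_{j \in [d]}$ satisfying $\cov(Z(S), Z(S')) = \indS{S}^T M(q) \indS{S'}$. Dividing by $k$ (to account for the averaging of $k$ i.i.d.\ samples within a batch) and plugging $\hatq$ in place of $q$ then defines $\C(\hatq) := \tfrac{1}{k} M(\hatq)$, which by construction satisfies $\cov_{S,S'}(\hatq) = \langle \indS{S}\indS{S'}^T, \C(\hatq)\rangle$.

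The proof contains no real obstacle: everything reduces to the Frobenius identity $u^T A v = \langle uv^T, A\rangle$ applied to expressions already bilinear in $\indS{S}$ and $\indS{S'}$, plus a short Bernoulli second-moment computation under RAPPOR. The only point requiring care is keeping the $1/|B'|$ and $1/k$ normalizations consistently placed between the per-batch matrices $\hatC_{b,B'}$, the aggregate $\hatC(B')$, and the population matrix $\C(\hatq)$, so that all three claimed identities hold simultaneously.
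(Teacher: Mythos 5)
Your proposal is correct and follows essentially the same route as the paper's appendix: both rest on the Frobenius identity $u^T A v = \langle uv^T, A\rangle$, identify the per-batch matrix as the rank-one outer product $\hatC_{b,B'} = (\mub_b - \hatq_{B'})(\mub_b - \hatq_{B'})^T$, and extract $\C(q)$ from the second-moment structure of the RAPPOR channel with a $1/k$ factor from batch averaging. The only organizational difference is in how $\C(q)$ is identified: you assemble it from the entrywise covariances $\mathrm{Cov}(Z(j),Z(j'))$ and bilinearity, whereas the paper computes $\mathbb{E}[Y_S Y_{S'}]$ for general sets $S,S'$, splitting into disjoint and overlapping pieces and invoking Lemma~\ref{lem:eqlaw} for the diagonal term before reading off the closed form $k\C(q)= -(\lambda\mathds{1}-q)(\lambda\mathds{1}-q)^T+\lambda(1-\lambda)I_d-(1-2\lambda)\text{Diag}(\lambda\mathds{1}-q)$. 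Your entrywise route is marginally more direct and avoids the case split on $S\cap S'$, though it is the same underlying calculation. You also correctly flag the normalization mismatch: the lemma statement writes $\hatC(B') = \sum_{b\in B'}\hatC_{b,B'}$, but matching definition~\eqref{def_cov_B'} requires the prefactor $\frac{1}{|B'|}$, which is what the paper's appendix proof actually uses.
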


The proof of the Lemma and the precise expressions of the matrices can be found in Appendix \ref{app:matrixexpression}. To define the semi-definite program, we introduce the following space of Gram matrices:
$$
\G:= \left\{M \in \mathbb{R}^{d \times d}, M_{ij}=\langle u^{(i)},v^{(j)}\rangle \;\Big| \; (u^{(i)})_{i=1}^d,(v^{(i)})_{j=1}^d \text{unit vectors in $(\mathbb{R}^{d},\|\cdot\|_2)$}\right\}.
$$

For a subset $B'$, let us define $D_{B'}=\hatC (B')-\C(\hatq_{B'})$, and define $M^*_{B'}$ as any matrix s.t.
$$
\langle M^*_{B'},D_{B'}\rangle \geq \max\limits_{M \in \mathcal{G}} \langle M,D_{B'}\rangle - c\frac{\epsilon d \ln(e/\epsilon)}{k},
$$
for some small enough absolute constant $c>0$. 

\begin{remark}\label{rem:polynomial_time}
Note that the quantity $\max\limits_{M \in \mathcal{G}} \langle M,D_{B'}\rangle$ is an SDP. For all desired precision $\delta>0$, it is possible to find the solution of this program up to an additive constant $\delta$ in polynomial time in all the parameters of the program and in $\log(1/\delta)$. Thus, $M^*_{B'}$ can be computed in polynomial time, as well as the contamination rate and the corruption score, defined below.
\end{remark}

\paragraph{Definition of the \textit{contamination rate} and \textit{corruption scores}.}
When $\hatq(S)\gg \lambda |S|$ for some $S \subseteq [d]$, the \textit{contamination rate} and \textit{corruption scores} have  special definitions. Formally, let $A = \left\{j \in [d] \,\big| \, \hatq_{B'}(j) \geq \lambda\right\}$ and $S^*=\max_{S \subseteq [d]}\left|\hatq_{B'}(S)-\lambda |S|\right|$. We have $S^* = A$ or $S^* = [d]\setminus A$, which can be computed in polynomial time. In the special case where $\left|\hatq_{B'}(S^*)-\lambda |S^*|\right|\geq 11$, the \textit{contamination rate} $\sqrt{\tau_{B'}}$ of the collection $B'$ is defined as $\tau_{B'} = \infty$ and the corruption score of a batch is defined as $\varepsilon_b(B')= \left|\mub_{b}(S^*)-\lambda |S^*|\right|$. \\

Otherwise, the \textit{contamination rate} $\sqrt{\tau_{B'}}$ of the collection $B'$ is defined through the quantity satisfying
\begin{equation}\label{eq:defcontaminationrate}
    \langle M^*_{B'},D_{B'}\rangle=\tau_{B'} \frac{\epsilon d \ln(e/\epsilon)}{k}.
\end{equation}

Define the \textit{corruption score} of a batch as
\begin{equation}\label{eq:corruptionscore}
    \varepsilon_b(B')= \langle M^*_{B'},\hatC_{b,B'}\rangle.
\end{equation}

The following Lemma guarantees that the quantity $\langle M^*_{B'},D_{B'}\rangle$ is a good approximation of $\max\limits_{S,S' \subseteq[d]} \big|\langle \indS{S}\indS{S'}^T,D_{B'} \rangle\big|$, with the advantage that it can be computed in polynomial time.

\begin{lemma}[Grothendieck's inequality corollary]\label{lem:Groetendickapprox}
Assume $d\geq 3$. For all symmetric matrix $A \in \R^{d \times d}$, it holds $$
\max_{S,S' \subseteq[d]} \big|\langle \indS{S}\indS{S'}^T,A \rangle\big| \leq \max_{M \in \mathcal{G}} \langle M,A \rangle\leq 8\max_{S,S' \subseteq[d]}\big|\langle \indS{S}\indS{S'}^T,A \rangle\big|.
$$
\end{lemma}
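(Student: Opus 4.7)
The plan is to handle the two inequalities separately.

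For the lower bound, I would construct, for each pair $(S,S')$, a Gram matrix $M \in \mathcal{G}$ realizing $\langle M, A \rangle = \pm\, \indS{S}^T A \indS{S'}$. Using the assumption $d \geq 3$, pick three orthonormal vectors $e_1, e_2, e_3 \in \mathbb{R}^d$; set $u^{(i)} = e_1$ if $i \in S$ and $u^{(i)} = e_2$ otherwise, and $v^{(j)} = e_1$ if $j \in S'$ and $v^{(j)} = e_3$ otherwise. Then $\langle u^{(i)}, v^{(j)} \rangle = \indS{i \in S}\,\indS{j \in S'}$, so the associated $M$ satisfies $\langle M, A \rangle = \indS{S}^T A \indS{S'}$; flipping the sign of every $u^{(i)}$ realizes the opposite sign. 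Since $\mathcal{G}$ is stable under $u^{(i)} \mapsto -u^{(i)}$, one also has $\max_{M \in \mathcal{G}} \langle M, A \rangle = \max_{M \in \mathcal{G}} |\langle M, A \rangle|$, so the first inequality follows.

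For the upper bound, the plan is to invoke the classical real Grothendieck inequality in the form
\begin{equation*}
\max_{M \in \mathcal{G}} |\langle M, A \rangle| \;\leq\; K_G\, \max_{s,t \in \{-1,+1\}^d} |s^T A t|,
\end{equation*}
with Grothendieck's constant $K_G \leq \pi / (2 \log(1+\sqrt{2})) < 2$, and then to convert the $\pm 1$ extremum back to an indicator extremum. Writing $s = \indS{S_+} - \indS{S_-}$ with $S_\pm = \{i : s_i = \pm 1\}$ (and similarly $t = \indS{T_+} - \indS{T_-}$), bilinear expansion of $s^T A t$ yields four terms of the form $\pm \indS{U}^T A \indS{V}$, so the triangle inequality gives $|s^T A t| \leq 4 \max_{S, S'} |\indS{S}^T A \indS{S'}|$. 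Chaining the two bounds produces the announced prefactor $4 K_G \leq 8$.

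No real obstacle is expected: the only non-elementary ingredient is Grothendieck's inequality, invoked as a black box; everything else is a direct geometric construction and an elementary expansion. The symmetry assumption on $A$ plays no role in this argument, and the statement in fact holds for any real $d \times d$ matrix.
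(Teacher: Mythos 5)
Your proposal follows essentially the same route as the paper: the lower bound via an explicit construction of a Gram matrix from three orthonormal vectors (this is exactly where $d \geq 3$ is used), and the upper bound by invoking the real Grothendieck inequality and then splitting each $\pm 1$ vector into its positive and negative $\{0,1\}$ parts, picking up the factor $4$. The paper simply uses the crude bound $K_G \leq 2$ rather than the Krivine value, but the final constant $8$ comes out the same way. Your observation that $\max_{M \in \mathcal{G}}\langle M,A\rangle = \max_{M \in \mathcal{G}}|\langle M,A\rangle|$ by sign-flip symmetry, and your remark that the symmetry of $A$ is never used, are both correct and mirror what the paper's proof implicitly relies on.
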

The proof of the Lemma can be found in Appendix \ref{app:Groetendickapprox}. Together with Lemma \ref{lem:vartoerror}, this Lemma implies that if conditions 1 and 2 hold, then for any subset $B'$ s.t. $|B'_G|\geq (1-2\epsilon)|B_G|$ and for any $S\subset[d]$, we have:
\begin{equation}\label{eq:corruptionscoretoerror}
    |\hatq_{B'}(S)-q(S)|\leq \left(30+2 \sqrt{\tau_{B'}}\right)\epsilon \sqrt{\frac{d\ln(e/\epsilon)}{k}}.
\end{equation}
%\ju{Vérifier les constantes}

This Lemma implies that if equation \eqref{concent_second_moment} holds, then, for any $B'$ s.t. $|B'_G|\geq (1-2\epsilon)|B_G|$
\begin{equation}\label{eq:boundtauBG}
    \sqrt{\tau_{B'_G}}\leq 200.
\end{equation}

\begin{lemma}[Score good vs. adversarial batches]\label{lem:goodvsbad}
If $\sqrt{\tau_{B'}}\geq 200$ and condition 1-2 hold, then for any collection of batches $B'$ s.t. $|B'\cap B_G|\geq (1-2 \epsilon)|B_G|$, for any sub-collection of good batches $B^{''}_G\subseteq B$, $|B^{''}_G|\leq \epsilon n$, we have:
$$
\sum_{b \in B^{''}_G}\varepsilon_b(B') < \frac{1}{8} \sum_{b \in B^{'}_A}\varepsilon_b(B').
$$
\end{lemma}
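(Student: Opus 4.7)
The plan is to split on the two cases in the definition of $\varepsilon_b(B')$ and $\tau_{B'}$: an extreme regime where $\left|\hatq_{B'}(S^*)-\lambda|S^*|\right|\geq 11$ (so $\tau_{B'}=\infty$ and $\varepsilon_b(B')=\left|\mub_b(S^*)-\lambda|S^*|\right|$), and a standard regime where $\varepsilon_b(B')=\langle M^*_{B'},\hatC_{b,B'}\rangle$ with $\tau_{B'}$ finite. In both cases the same strategy applies: upper bound $\sum_{b\in B''_G}\varepsilon_b(B')$ using the concentration properties 1--2 of good batches, and lower bound $\sum_{b\in B'_A}\varepsilon_b(B')$ using that $\sqrt{\tau_{B'}}\geq 200$ forces the adversarial batches to account for a large contamination signal.

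\textbf{Extreme regime.} The \textsc{RAPPOR} identity yields $q(S^*)-\lambda|S^*|=(1-2\lambda)p(S^*)\in[0,1-2\lambda]$. Combined with condition 1 and the \emph{wlog} assumption $6\epsilon\sqrt{d\ln(e/\epsilon)/k}\leq 1$, this shows $\left|\hatq_{B'_G}(S^*)-\lambda|S^*|\right|$ is bounded by a small absolute constant, hence $\left|\sum_{b\in B'_G}(\mub_b(S^*)-\lambda|S^*|)\right|$ is at most a small multiple of $|B'_G|$. Since $\left|\sum_{b\in B'}(\mub_b(S^*)-\lambda|S^*|)\right|=|B'|\cdot\left|\hatq_{B'}(S^*)-\lambda|S^*|\right|\geq 11|B'|$, a triangle inequality implies $\sum_{b\in B'_A}\varepsilon_b(B')\gtrsim |B'|$. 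For the good subset I would use $\varepsilon_b(B')\leq|\mub_b(S^*)-q(S^*)|+|q(S^*)-\lambda|S^*||$, bound the first sum by Cauchy--Schwarz against condition 2 with $S=S'=S^*$, and the second by $(1-2\lambda)|B''_G|$, obtaining $\sum_{b\in B''_G}\varepsilon_b(B')\lesssim |B_G|$ with a small enough constant to push the ratio below $1/8$.

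\textbf{Standard regime.} Since $M^*_{B'}\in\mathcal{G}$ and $\sum_{b\in B''_G}\hatC_{b,B'}$ is symmetric (each summand being a rank-one outer product), Lemma \ref{lem:Groetendickapprox} gives
\begin{equation*}
\sum_{b\in B''_G}\varepsilon_b(B')=\Big\langle M^*_{B'},\sum_{b\in B''_G}\hatC_{b,B'}\Big\rangle\leq 8\max_{S,S'\subseteq[d]}\Big|\sum_{b\in B''_G}\widehat{\text{Cov}}^{B'}_{S,S'}(b)\Big|.
\end{equation*}
I would then expand $\mub_b-\hatq_{B'}=(\mub_b-q)+(q-\hatq_{B'})$ inside the right-hand side: the pure $(\mub_b-q)(\mub_b-q)$ piece is controlled directly by condition 2; the two mixed pieces by Cauchy--Schwarz against condition 2 combined with the bound $|\hatq_{B'}-q|\lesssim\sqrt{\tau_{B'}}\,\epsilon\sqrt{d\ln(e/\epsilon)/k}$ from equation (\ref{eq:corruptionscoretoerror}); and the constant-in-$b$ piece $|B''_G|(q-\hatq_{B'})^2$ by the same bound squared together with $|B''_G|\leq 2\epsilon|B_G|$. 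Summing yields
\begin{equation*}
\sum_{b\in B''_G}\varepsilon_b(B')\ \lesssim\ \bigl(1+\sqrt{\tau_{B'}}\,\epsilon+\tau_{B'}\epsilon^2\bigr)\,\frac{\epsilon d|B_G|\ln(e/\epsilon)}{k}.
\end{equation*}
For the adversarial sum, the defining identity $\sum_{b\in B'}\varepsilon_b(B')=|B'|\tau_{B'}\epsilon d\ln(e/\epsilon)/k+|B'|\langle M^*_{B'},\C(\hatq_{B'})\rangle$ combined with a parallel expansion of $\sum_{b\in B'_G}\varepsilon_b(B')$ around $\hatq_{B'_G}$ (using the centred identity $\sum_{b\in B'_G}(\mub_b-\hatq_{B'_G})=0$ and the variance concentration in condition 1) gives $\sum_{b\in B'_A}\varepsilon_b(B')\gtrsim\tau_{B'}\epsilon d|B'|\ln(e/\epsilon)/k$. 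The claim then reduces to the numerical check $1+\sqrt{\tau_{B'}}\epsilon+\tau_{B'}\epsilon^2\ll \tau_{B'}\epsilon$ with enough slack to absorb the Grothendieck factor $8$ and the target ratio $1/8$, which holds for $\sqrt{\tau_{B'}}\geq 200$ and $\epsilon\leq 1/100$.

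The main obstacle lies in the standard regime, where $|\hatq_{B'}-q|$ itself scales with $\sqrt{\tau_{B'}}$ via (\ref{eq:corruptionscoretoerror}); the threshold $\sqrt{\tau_{B'}}\geq 200$ is designed precisely so that the two noise terms $\sqrt{\tau_{B'}}\epsilon$ and $\tau_{B'}\epsilon^2$ remain strictly smaller than the adversarial signal $\tau_{B'}\epsilon$ by the factor needed to beat $8\times 1/8$. A secondary subtlety is to justify that $M^*_{B'}\in\mathcal{G}$, so that the same Grothendieck upper bound can be applied to $\langle M^*_{B'},\sum_{b\in B''_G}\hatC_{b,B'}\rangle$.
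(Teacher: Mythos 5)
Your proposal is correct and follows essentially the same two-regime structure as the paper's proof, with the same core ingredients (Grothendieck's Lemma \ref{lem:Groetendickapprox}, conditions 1--2, the definition \eqref{eq:defcontaminationrate} of $\tau_{B'}$, and equation \eqref{eq:corruptionscoretoerror}); your bound of order $1+\sqrt{\tau_{B'}}\,\epsilon+\tau_{B'}\epsilon^2$ on the good sum and order $\tau_{B'}$ on the adversarial sum match the paper's explicit coefficients $0.0032\tau_{B'}+65\sqrt{\tau_{B'}}+275$ and $0.66\tau_{B'}-62\sqrt{\tau_{B'}}-2260$, both multiplying $\epsilon d|B_G|\ln(e/\epsilon)/k$. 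The only slip is cosmetic: the final numerical check should compare $1+\sqrt{\tau_{B'}}\,\epsilon+\tau_{B'}\epsilon^2$ against $\tau_{B'}$, not $\tau_{B'}\epsilon$, since both of your displayed bounds already carry the common factor $\epsilon d|B_G|\ln(e/\epsilon)/k$; the conclusion stands regardless for $\sqrt{\tau_{B'}}\geq 200$ and $\epsilon\leq 1/100$.
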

This Lemma is proved in Appendix \ref{app:goodvsbad}, where we argue that this Lemma ensures that each batch deletion has a probability at least $\frac{3}{4}$ of removing an adversarial batch.

%\section{Upper bound old bis}
%\input{contents/UpperBound}
%\newpage

%\input{contents/JaineOrlinskyProofHighlights}
\section{Discussion and future work}

We studied the problem of estimating discrete distributions in total variation, with both privacy and robustness constraints. We obtained an information theoretic lower bound of $\epsilon\sqrt{d/\alpha^2 k}+\sqrt{d^2/\alpha^2 kn}$. We  proposed an algorithm running in polynomial time and returning an estimated parameter such that the estimation error is within $\sqrt{\log(1/\epsilon)}$ of the information theoretic lower bound. It would be interesting to explore if polynomial algorithms could achieve the optimal bound without this extra factor. 
We do not consider the adaptation to unknown contamination $\epsilon$ and leave it for future work. 
It would also be interesting to explore what happens if the contamination occurs before the privacy rather than after, like in ~\cite{li2022robustness}. Indeed, 
they do not consider batched data, and it would be interesting to check if their result holds in that case.
Also, the upper bound holds only if $n\geq O(d)$. Exploring the regime $n\leq d$ would be an interesting research direction, which has not been done to our knowledge, even in the case of the sole robustness constraint. Finally, we could study the combination of the robustness and privacy constraints in other settings, such as density estimation.

\bibliography{contents/biblio}

\section{Appendix}
\subsection{Proof of Lemma \ref{lem:eqlaw}, Law of the sum}\label{app:eqlaw}
\begin{lemma}[Law of the sum]\label{lem:eqlaw}
For any subset $S \subseteq [d]$, we have:
$$
\sum_{j \in S}Z(j) \sim \sum_{j=1}^{|S|-1}b_j+b^S,
$$
with the $(b_j)_{j=1}^{|S|-1}$ independent Bernoulli variables s.t. $\mathrm{P}(b_j=1)=\lambda$ and $b^S$ a Bernoulli independent of the others s.t.
$$
\mathrm{P}(b^S=1)=\lambda+(1-2\lambda)p_S.
$$
\end{lemma}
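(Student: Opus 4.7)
My plan is to make the noise in the RAPPOR channel explicit and then do a case analysis on whether $X\in S$. Concretely, I would introduce iid flipping bits $F_1,\dots,F_d\sim\text{Ber}(\lambda)$ independent of $X$, so that $Z(j) = \mathds{1}_{X=j}(1-F_j) + (1-\mathds{1}_{X=j})F_j$, which matches the RAPPOR definition. Summing over $j\in S$: if $X\notin S$ all $\mathds{1}_{X=j}$ vanish and $\sum_{j\in S}Z(j) = \sum_{j\in S}F_j$ is a sum of $|S|$ iid $\text{Ber}(\lambda)$'s; if $X\in S$, the index $j=X$ contributes $(1-F_X)\sim\text{Ber}(1-\lambda)$ while the remaining $|S|-1$ indices contribute iid $\text{Ber}(\lambda)$'s, all independent of $F_X$.

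Next I would peel off one summand as a ``special'' variable $Y$, treating the remaining $|S|-1$ summands as the iid $\text{Ber}(\lambda)$ variables $b_1,\dots,b_{|S|-1}$. On $\{X\in S\}$ I take $Y = 1-F_X$; on $\{X\notin S\}$ I take $Y = F_{j_0}$ for, say, the minimal $j_0\in S$. A one-line conditioning then gives $\mathbb{P}(Y=1) = p_S(1-\lambda) + (1-p_S)\lambda = \lambda + (1-2\lambda)p_S$, matching the announced parameter of $b^S$. Conditional on $X$, the vector $(b_1,\dots,b_{|S|-1})$ is a relabeling of $|S|-1$ of the $F_j$'s and therefore has the same $\text{Ber}(\lambda)^{\otimes(|S|-1)}$ law regardless of the value of $X$.

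The delicate point, which I expect to be the main obstacle, is that the identity of the ``special'' coordinate depends on $X$, so unconditional independence of $Y$ and $(b_j)_j$ needs a separate argument. I would handle it by conditional independence plus a mixture identity: the $F_j$'s being mutually independent, with the two groups of variables built from disjoint indices, ensures $Y\perp (b_j)\mid X$, so for measurable $A,C$ one has $\mathbb{P}((b_j)\in A,\,Y\in C) = \sum_{x}\mathbb{P}(X=x)\,\mathbb{P}((b_j)\in A\mid X=x)\,\mathbb{P}(Y\in C\mid X=x)$; since $\mathbb{P}((b_j)\in A\mid X=x)$ does not depend on $x$, it factors out to yield $\mathbb{P}((b_j)\in A)\,\mathbb{P}(Y\in C)$. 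An equally clean alternative that bypasses this step is to compute the MGF $\mathbb{E}[e^{t\sum_{j\in S}Z(j)}]$ by conditioning on $X$: using conditional independence of the $Z(j)$'s given $X$ and weighting by $p_S$ and $1-p_S$, it factors as $(\lambda e^t + 1-\lambda)^{|S|-1}\bigl(qe^t + 1-q\bigr)$ with $q = \lambda + (1-2\lambda)p_S$, which is precisely the MGF of $b^S + \sum_{j=1}^{|S|-1}b_j$, and MGF uniqueness for integer-valued variables closes the argument.
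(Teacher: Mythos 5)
Your proposal is correct, and both of your routes are genuinely different from the paper's. The paper proves the lemma by brute-force computation of the pmf $\mathbb{P}\big(\sum_{j\in S}Z(j)=t\big)$: it conditions on whether $X\in S$ and on whether the distinguished coordinate $j=X$ flips, writes the three resulting terms with binomial coefficients, and then regroups to expose the pmf of $b^S+\sum_{j=1}^{|S|-1}b_j$. Your explicit-bit construction ($F_j\sim\mathrm{Ber}(\lambda)$ iid, $Z(j)=\mathds{1}_{X=j}(1-F_j)+(1-\mathds{1}_{X=j})F_j$) instead produces an actual coupling realizing the claimed decomposition, and your handling of the only delicate point — that the ``special'' index is $X$-dependent — via the mixture identity is correct: conditionally on $X$ the two blocks are built from disjoint $F_j$'s hence independent, and since the conditional law of $(b_1,\dots,b_{|S|-1})$ given $X=x$ is $\mathrm{Ber}(\lambda)^{\otimes(|S|-1)}$ for every $x$, it factors out of the sum over $x$. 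Your MGF argument is the cleanest of the three: it performs the same conditioning on $X$ as the paper but packages it as $(\lambda e^t+1-\lambda)^{|S|-1}\big[p_S((1-\lambda)e^t+\lambda)+(1-p_S)(\lambda e^t+1-\lambda)\big]=(\lambda e^t+1-\lambda)^{|S|-1}(qe^t+1-q)$ with $q=\lambda+(1-2\lambda)p_S$, and generating-function uniqueness for distributions on $\{0,\dots,|S|\}$ finishes it without any binomial-coefficient bookkeeping. What the paper's proof buys is that it is entirely self-contained and elementary (no auxiliary construction, no appeal to MGF uniqueness); what yours buys is a shorter and more conceptual argument, plus an explicit coupling that makes the claimed distributional identity transparent.
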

For any $t \in [d]$,
\begin{align*}
    \mathbb{P}\left(\sum_{j \in S}Z(j)=t\right)&= \binom{|S|-1}{t-1}\left(1-\lambda\right)^{|S|-t+1}\lambda^{t-1}  p(S)+\left(1-p(S)\right)\binom{|S|}{t}\left(1-\lambda\right)^{|S|-t}\lambda^{t} \\
    &+\binom{|S|-1}{t}\left(1-\lambda\right)^{|S|-t-1}\lambda^{t+1}  p(S)\\
    &= \binom{|S|-1}{t-1}\left(1-\lambda\right)^{|S|-t}\lambda^{t-1}\left[ (1-\lambda) p(S)+\lambda\left(1-p(S)\right)\right]\\
    &+\binom{|S|-1}{t}\left(1-\lambda\right)^{|S|-t-1}\lambda^{t}\left[(1-\lambda)\left(1-p(S)\right)+\lambda p(S)\right]\\
    &= \binom{|S|-1}{t-1}\left(1-\lambda\right)^{|S|-t}\lambda^{t-1}\left[ (1-2\lambda) p(S)+\lambda\right]\\
    &+\binom{|S|-1}{t}\left(1-\lambda\right)^{|S|-t-1}\lambda^{t}\left[1-\lambda-(1-2\lambda) p(S)\right].
\end{align*}
Note that we have:
\begin{align}\label{eq:qS}
   q(S)&=\left(1-2\lambda\right)p(S)+\lambda |S|.
\end{align}

\subsection{Proof of Lemma \ref{lem:essentialproperties}, Essential properties of good batches}\label{app:essentialproperties}
We start with the following intermediary Lemma.
\begin{lemma}\label{lem:meangoodbatches}
If $\left|B_{G}\right| \geq \frac{2 d}{\epsilon^{2} \ln (e/ \epsilon)}$, then $\forall S \subseteq[d]$ and $\forall B_{G}^{\prime} \subseteq B_{G}$ of size $\left|B_{G}^{\prime}\right| \geq(1-2\epsilon)\left|B_{G}\right|$, with probability at least $ 1-4 e^{-d}$,
$$
\left|\hatq_{B_{G}^{\prime}}(S)-q(S)\right| \leq 6\epsilon \sqrt{\frac{d\ln (e/ \epsilon)}{k}}.
$$
\end{lemma}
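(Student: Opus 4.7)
The plan is to control $\bigl|\hat{q}_{B'_G}(S) - q(S)\bigr|$ for a fixed $S$ and a fixed subset $B'_G$ by applying Hoeffding's inequality to the Bernoulli-sum representation given by Lemma~\ref{lem:eqlaw}, and then to union-bound over all relevant $(S, B'_G)$. Writing $R := B_G \setminus B'_G$ with $|R|\leq 2\epsilon|B_G|$, the natural decomposition is
\begin{equation*}
\hat{q}_{B'_G}(S) - q(S) \;=\; \frac{1}{|B'_G|}\sum_{b \in B_G}\bigl(\mub_b(S) - q(S)\bigr) \;-\; \frac{1}{|B'_G|}\sum_{b \in R}\bigl(\mub_b(S) - q(S)\bigr),
\end{equation*}
so uniformity in $B'_G$ reduces to controlling both a ``full-sample'' empirical mean and a supremum $\sup_{|R|\leq 2\epsilon|B_G|}\bigl|\sum_{b \in R}(\mub_b(S)-q(S))\bigr|$ of partial sums.

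By Lemma~\ref{lem:eqlaw}, for every deterministic $R' \subseteq B_G$ of cardinality $r$, the double sum $\sum_{b \in R'}\sum_{i=1}^k Z^b_i(S)$ is a sum of $rk|S|$ independent Bernoulli random variables with mean $rk\,q(S)$, so Hoeffding's inequality yields
\begin{equation*}
\mathbb{P}\!\left(\Bigl|\sum_{b \in R'}\bigl(\mub_b(S) - q(S)\bigr)\Bigr| > t\right) \;\leq\; 2\exp\!\Bigl(-\frac{2k t^2}{r |S|}\Bigr).
\end{equation*}
I would take a union bound over the $2^d$ subsets $S \subseteq [d]$ and, for the robust term only, over the $\binom{|B_G|}{r} \leq (e/(2\epsilon))^r$ subsets of size $r := \lceil 2\epsilon|B_G|\rceil$. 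Forcing $2kt^2/(r|S|) \gtrsim d + r\log(e/\epsilon)$ and using $|S|\leq d$ gives, for the robust term,
\begin{equation*}
\frac{t}{|B'_G|} \;\lesssim\; \frac{r}{|B'_G|}\sqrt{\frac{d\log(e/\epsilon)}{k}} + \frac{1}{|B'_G|}\sqrt{\frac{r d^2}{k}} \;\leq\; \frac{2\epsilon}{1-2\epsilon}\sqrt{\frac{d\log(e/\epsilon)}{k}} + \sqrt{\frac{2\epsilon d^2}{|B_G|\,k}}.
\end{equation*}
The sample-size hypothesis $|B_G|\geq 2d/(\epsilon^2\log(e/\epsilon))$ exactly absorbs the second summand into an $\epsilon^{3/2}\sqrt{d\log(e/\epsilon)/k}$ term, and an entirely analogous but smaller contribution arises from the full-sample term (where only the $2^d$ union bound on $S$ is needed, since $r=|B_G|$ is fixed there). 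Chasing absolute constants then yields $|\hat{q}_{B'_G}(S)-q(S)| \leq 6\epsilon\sqrt{d\log(e/\epsilon)/k}$ uniformly, with probability at least $1-4e^{-d}$.

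The main obstacle is the combinatorial cost of uniformizing over $B'_G$: the union bound over all subsets of size $r$ inflates the Gaussian exponent by $r\log(e/\epsilon) \asymp \epsilon|B_G|\log(e/\epsilon)$, and it is exactly this ``robust'' inflation that is responsible for producing the target rate $\epsilon\sqrt{d\log(e/\epsilon)/k}$ rather than the much smaller pure-statistical rate $\sqrt{d/(|B_G|k)}$. The sample-size assumption is finely calibrated so that the $d$-term coming from the union bound over $S$ is itself dominated by this robust inflation. Finally, since the target $t$ sits in the sub-Gaussian regime throughout our parameter range (a Bernstein correction would only bind if $t$ exceeded $r\sigma^2/M$, which one checks is not the case here), Hoeffding's inequality is sufficient and no sharper concentration bound is required.
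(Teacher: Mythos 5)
Your proposal matches the paper's proof essentially step by step: the same decomposition of $\hatq_{B'_G}(S)-q(S)$ into a full-sample term and a removed-subset term, Hoeffding's inequality applied via the Bernoulli representation of Lemma~\ref{lem:eqlaw} (which the paper uses implicitly to get the $|S|$ rather than $|S|^2$ in the denominator of the exponent), and union bounds over $S$ and over the removed sets. The only small inaccuracy is that you union bound over removed subsets of size exactly $r = \lceil 2\epsilon|B_G|\rceil$, whereas $R = B_G\setminus B'_G$ may have any cardinality from $0$ up to $\lfloor 2\epsilon|B_G|\rfloor$; the paper sums $\binom{|B_G|}{j}$ over all $1\le j\le 2\epsilon|B_G|$, which costs only a multiplicative factor of order $2\epsilon|B_G|$ and is absorbed into the exponent, so your argument goes through after this trivial correction.
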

\begin{proof}: The proof of this lemma is exactly part of that of lemma 11 in \cite{jain2020optimal} with different constants, we repeat it for completeness. From Hoeffding's inequality, for any $S \subseteq[d]$,
$$
\mathbb{P}\left[|B_G|\left|\hatq_{B_{G}}(S)-q(S)\right| \geq  \frac{|B_G|}{\sqrt{2}}\epsilon\sqrt{\frac{d\ln (e/ \epsilon)}{k}}\right] \leq 2 e^{-\epsilon^2|B_G| \ln (e/ \epsilon)} \leq 2e^{-2d} .
$$
Similarly, for a fixed sub-collection $U_{G} \subseteq B_{G}$ of size $1 \leq\left|U_{G}\right| \leq 2\epsilon\left|B_{G}\right|$,
\begin{equation}\label{eq:meansmallcollection}
   \mathbb{P}\left[\left|U_{G}\right| \cdot\left|\hatq_{U_{G}}(S)-q(S)\right| \geq 2 \epsilon|B_G|\sqrt{\frac{d\ln (e/ \epsilon)}{k}}\right]\leq2 e^{-8\frac{\epsilon^2 |B_G|^2 }{|U_G|}\ln (e/ \epsilon)}\leq 2 e^{- 4\epsilon|B_G| \ln (e/ \epsilon)}. 
\end{equation}

We now bound the number of subsets of cardinality smaller than $2\epsilon |B_G|$:
\begin{align}
    \sum_{j=1}^{\left\lfloor2\epsilon\left|B_{G}\right|\right\rfloor}\left(\begin{array}{c}\left|B_{G}\right| \\ j\end{array}\right) &\leq 2\epsilon\left|B_{G}\right|\left(\begin{array}{c}\left|B_{G}\right| \\ \left\lfloor2\epsilon\left|B_{G}\right|\right\rfloor\end{array}\right) \leq 2\epsilon\left|B_{G}\right|\left(\frac{e\left|B_{G}\right|}{2\epsilon\left|B_{G}\right|}\right)^{2\epsilon\left|B_{G}\right|} \nonumber \\
    &\leq e^{2\epsilon\left|B_{G}\right| \ln (e/ \epsilon)+\ln \left(2\epsilon\left|B_{G}\right|\right)}<e^{3 \epsilon\left|B_{G}\right| \ln (e/ \epsilon)}.
\end{align}

Thus, by union bound, 
$$
\mathbb{P}\left[\exists |U_G|\leq2\epsilon |B_G|, \left|U_{G}\right| \cdot\left|\hatq_{U_{G}}(S)-q(S)\right| \geq 2 \epsilon|B_G|\sqrt{\frac{\ln (e /2 \epsilon)}{n}}\right] \leq 2 e^{-\epsilon|B_G| \ln (e/ \epsilon)} \leq 2e^{-2d} .
$$

For any sub-collection $B_{G}^{\prime} \subseteq B_{G}$ with $\left|B_{G}^{\prime}\right| \geq(1-2\epsilon)\left|B_{G}\right|$,
$$
\begin{aligned}
\left|\sum_{b \in B_{G}^{\prime}}\left(\mub_{b}(S)-q(S)\right)\right| &=\left|\sum_{b \in B_{G}}\left(\mub_{b}(S)-q(S)\right)-\sum_{b \in B_{G} \setminus B_{G}^{\prime}}\left(\mub_{b}(S)-q(S)\right)\right| \\
& \leq\left|\sum_{b \in B_{G}}\left(\mub_{b}(S)-q(S)\right)\right|+\left|\sum_{b \in B_{G} \setminus B_{G}^{\prime}}\left(\mub_{b}(S)-q(S)\right)\right| \\
& \leq\left|B_{G}\right| \times\left|\hatq_{B_{G}}(S)-q(S)\right|+\max _{U_{G}:\left|U_{G}\right| \leq 2\epsilon\left|B_{G}\right|}\left|U_{G}\right| \times\left|\hatq_{U_{G}}(S)-q(S)\right| \\
& \leq (2+\frac{1}{\sqrt{2}}) \epsilon\left|B_{G}\right| \sqrt{\frac{\ln (e/ \epsilon)}{n}}.
\end{aligned}
$$
where the last inequality holds with probability at least $1-4e^{-2d}$. We conclude by using a union bound over the $2^d$ possible subsets and by noting that $(2+\frac{1}{\sqrt{2}})\frac{|B_G|}{|B'_G|}\leq 6$. \end{proof}

We now move to the following result.
\begin{lemma}\label{lem:covgoodbatches}
If $\left|B_{G}\right| \geq \frac{3d}{\epsilon^{2} \ln (e/ \epsilon)}$, then $\forall S,S' \subseteq[d]$ and $\forall B_{G}^{\prime} \subseteq B_{G}$ of size $\left|B_{G}^{\prime}\right| \geq(1-2\epsilon)\left|B_{G}\right|$, with probability at least $ 1-2 e^{-d}$,
$$
\left|\frac{1}{|B'_G|}\sum_{b \in B'_G}\left(\mub_{b}(S)-q(S)\right)\left(\mub_{b}(S')-q(S')\right)-\cov_{S,S'}(q)\right| \leq \frac{140 d\epsilon \ln \left(\frac{ e}{\epsilon}\right)}{k}.
$$
\end{lemma}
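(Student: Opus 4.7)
The plan is to follow the template of the proof of Lemma \ref{lem:meangoodbatches}, adapted to the empirical second moment. First I fix $S,S'\subseteq[d]$ and apply Bernstein's inequality to $\sum_{b \in B_G} W_b$, where $W_b := (\mub_b(S)-q(S))(\mub_b(S')-q(S'))$ are iid with mean $\cov_{S,S'}(q)$. Using the decomposition of $Z_i^b(S)$ as a sum of $|S|$ independent Bernoullis given by Lemma \ref{lem:eqlaw}, each factor $\mub_b(S)-q(S)$ is an average of $k$ iid centered variables of variance at most $|S|/4$ and range at most $|S|$. The classical fourth-moment bound for such averages combined with Cauchy--Schwarz then yields
\begin{equation*}
\text{Var}(W_b) \leq \sqrt{E[(\mub_b(S)-q(S))^4]\cdot E[(\mub_b(S')-q(S'))^4]} \lesssim \frac{|S||S'|}{k^2} \leq \frac{d^2}{k^2}.
\end{equation*}

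To tame the naive worst-case bound $|W_b|\leq|S||S'|$, I would introduce a truncation. On the event $\mathcal{E}_b := \{|\mub_b(S)-q(S)| \leq T_S\}\cap\{|\mub_b(S')-q(S')| \leq T_{S'}\}$ with $T_S$ of order $\sqrt{|S|d/k}$, the truncated variable $\tilde W_b := W_b \mathds{1}_{\mathcal{E}_b}$ satisfies $|\tilde W_b|\lesssim d\sqrt{|S||S'|}/k \leq d^2/k$. Bernstein applied at the single-batch level gives $P(\mathcal{E}_b^c)\leq 4 e^{-c'd}$, and a union bound over the $|B_G|$ batches and the $4^d$ pairs $(S,S')$ ensures $\tilde W_b = W_b$ for every batch with probability $\geq 1-e^{-\Omega(d)}$. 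Then Bernstein applied to $\sum_b \tilde W_b$ with $\sigma_W^2\lesssim d^2/k^2$ and $M_W\lesssim d^2/k$ yields an exponent at least $\min\bigl(|B_G|t^2/(4\sigma_W^2),\ 3|B_G|t/(4M_W)\bigr)$; choosing $t$ as a constant multiple of $d\epsilon\ln(e/\epsilon)/k$ and using the hypothesis $|B_G|\geq 3d/(\epsilon^2\ln(e/\epsilon))$, both terms exceed $3d$.

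For a general sub-collection $B'_G$ with $|B'_G|\geq (1-2\epsilon)|B_G|$, I would write $\sum_{b\in B'_G}W_b = \sum_{b\in B_G}W_b - \sum_{b\in U_G}W_b$ where $U_G := B_G\setminus B'_G$ has size at most $2\epsilon|B_G|$. The first piece is controlled as above. For the second, Bernstein applied to a fixed $U_G$ followed by a union bound over the $\sum_{j\leq 2\epsilon|B_G|}\binom{|B_G|}{j}\leq e^{3\epsilon|B_G|\ln(e/\epsilon)}$ possible choices of $U_G$ gives the same order of bound up to an acceptable failure probability, exactly as in the proof of Lemma \ref{lem:meangoodbatches}. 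Combining the two pieces via the triangle inequality, dividing by $|B'_G|\geq|B_G|/2$, and taking a final union bound over the $4^d$ pairs $(S,S')$ produces the claimed bound of $140\, d\epsilon\ln(e/\epsilon)/k$ with probability $\geq 1-2e^{-d}$.

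\textbf{Main obstacle.} The technical heart is calibrating the truncation so that $M_W$ drops from the naive $|S||S'|\leq d^2$ to essentially $d^2/k$. Without it, the ``sub-exponential'' term $3|B_G|t/(4M_W)$ in Bernstein fails to reach $3d$ in the regime $kd\epsilon\gg 1$, even though the variance-dominated term is easily controlled. The other subtlety, familiar from Lemma \ref{lem:meangoodbatches}, is absorbing the counting factor $e^{3\epsilon|B_G|\ln(e/\epsilon)}$ on the sub-collections into Bernstein's tail; this is precisely what forces the hypothesis $|B_G|\geq 3d/(\epsilon^2\ln(e/\epsilon))$.
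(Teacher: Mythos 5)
Your overall scaffolding (Bernstein for $\sum_{b\in B_G}W_b$, same for small sub-collections $U_G$, the counting bound $e^{3\epsilon|B_G|\ln(e/\epsilon)}$, the hypothesis on $|B_G|$ absorbing it, and the final union over $4^d$ pairs) mirrors the paper. The genuine divergence is how you control the heavy-tailedness of $W_b$: you truncate a bounded variable and use classical Bernstein, while the paper observes that $\frac{\mub_b(S)-q(S)}{d}\sim\operatorname{subG}\!\left(\frac{1}{4kd}\right)$, hence the centered product $U_b$ is $\operatorname{subE}\!\left(\frac{4}{kd}\right)$, and applies sub-exponential Bernstein directly. This is not a cosmetic difference, and your version has a gap.

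The problem is the size of $M_W$ after truncation. For the failure probability of the truncation event to be $\lesssim e^{-d}$, you must truncate each factor at its $e^{-\Theta(d)}$ quantile, i.e. $T_S\asymp\sqrt{|S|d/k}$. This gives $M_W\lesssim d\sqrt{|S||S'|}/k\leq d^2/k$, which is a full factor of $d$ larger than the sub-exponential scale $\nu_W\asymp d/k$ that the paper obtains via the product-of-sub-Gaussians lemma. In bounded-variable Bernstein, the Poisson-type term is $\sim|B_G|t/M_W$. Plugging in $t\asymp d\epsilon\ln(e/\epsilon)/k$ and $|B_G|\geq 3d/(\epsilon^2\ln(e/\epsilon))$ gives
$$\frac{|B_G|\,t}{M_W}\;\gtrsim\;\frac{3d/(\epsilon^2\ln(e/\epsilon))\cdot d\epsilon\ln(e/\epsilon)/k}{d^2/k}\;=\;\frac{3}{\epsilon},$$
which is \emph{not} $\gtrsim d$ once $\epsilon d\gtrsim 1$ (a perfectly admissible regime, e.g. $\epsilon=1/100$, $d=10^4$). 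So your claim that ``both terms exceed $3d$'' is false in that regime, and the union bound over $4^d$ pairs $(S,S')$ collapses. The paper's version of the same term is $|B_G|t/\nu_W\gtrsim d/\epsilon$, which is comfortably $\geq 3d$, because sub-exponential Bernstein uses the $\psi_1$-scale rather than a worst-case truncation bound; truncating at the $e^{-d}$ quantile is precisely what costs you the extra $d$. (A second, smaller issue: you would also need to verify that the truncation bias $|\mathbb E[W_b\mathds{1}_{\mathcal E_b^c}]|$ is negligible, which you do not address; but this would be easy to fix. The $M_W$ issue is the one that breaks the argument.) To repair your route you would either have to replace truncation by a sub-exponential Bernstein applied directly to $W_b$, which is exactly the paper's argument, or use a multi-level/Bennett-type refinement that recovers the correct scale.
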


\begin{proof}: Let $U_{b}(S,S')=\left(\frac{\mub_{b}(S)-q(S)}{d}\right)\left(\frac{\mub_{b}(S')-q(S')}{d}\right)-\frac{\cov_{S,S'}(q)}{d^2}$. For $b \in B_{G}, \frac{\mub_{b}(S)-q(S)}{d}\sim \operatorname{subG}(1 / 4 dk)$, therefore
$$
\left(\frac{\mub_{b}(S)-q(S)}{d}\right)\left(\frac{\mub_{b}(S')-q(S')}{d}\right)-\mathbb{E}\left[\left(\frac{\mub_{b}(S)-q(S)}{d}\right)\left(\frac{\mub_{b}(S')-q(S')}{d}\right)\right]=Y_{b} \sim \operatorname{subE}\left(\frac{16}{4 kd}\right) .
$$

Here subE is sub exponential distribution. For any $S,S' \subseteq[d]$, Bernstein's inequality gives:
$$
\begin{aligned}
\mathbb{P}\bigg[\bigg|\sum_{b \in B_{G}} U_{b}(S,S')\bigg| \geq 6 \epsilon\left|B_{G}\right| \frac{\ln (e/ \epsilon)}{kd}\bigg] & \leq 2 e^{- \epsilon^2\left|B_{G}\right| \ln^2 (e / \epsilon)}\leq 2 e^{-3d}.\\
\end{aligned}
$$
Next, for a fixed sub-collection $B^{''}_{G} \subseteq B_{G}$ of size $1 \leq\left|B^{''}_{G} \right| \leq \epsilon\left|B_{G}\right|$,
$$
\begin{aligned}
\operatorname{Pr}\left[\left|\sum_{b \in B^{''}_{G} } U_{b}(S,S')\right| \geq 64 \epsilon\left|B_{G}\right| \frac{\ln (e / \epsilon)}{n}\right] & \leq 2 e^{-\frac{64 \epsilon\left|B_{G}\right| \ln (e / \epsilon)}{2\times 2 \times 4 / n}} \\
& \leq 2 e^{-4 \epsilon\left|B_{G}\right| \ln (e / \epsilon)}.
\end{aligned}
$$

The same steps as the previous lemma terminate the proof, except that there are now $2^{2d}$ sets $S,S' \subseteq [d]$.

\end{proof}

By Lemma \ref{lem:meangoodbatches} and \ref{lem:covgoodbatches}, if $\left|B_{G}\right| \geq \frac{2 d}{\epsilon^{2} \ln (e/ \epsilon)}$, then $\forall S \subseteq[d]$ and $\forall B_{G}^{\prime} \subseteq B_{G}$ of size $\left|B_{G}^{\prime}\right| \geq(1-2\epsilon)\left|B_{G}\right|$, with probability at least $ 1-8 e^{-d}$:
\begin{align*}
    \left|\hatq_{B_{G}^{\prime}}(S)-q(S)\right| \leq 6\epsilon \sqrt{\frac{d\ln (e/ \epsilon)}{k}}&\\
   & \text{ and }\\
    \left|\frac{1}{|B'_G|}\sum_{b \in B'_G}\left(\mub_{b}(S)-q(S)\right)\left(\mub_{b}(S')-q(S')\right)-\cov_{S,S'}(q)\right| &\leq \frac{140 d\epsilon \ln \left(\frac{6 e}{\beta}\right)}{k}.
\end{align*}
Additionally Lemma \ref{lem:C_lip}, this implies:
$$
\left|\cov_{S,S'}(q)-\cov_{S,S'}(\hatq_{B'_G})\right| \leq 66\epsilon \sqrt{\frac{d\ln (e/ \epsilon)}{k}}.
$$
Moreover:
\begin{align*}
  \frac{1}{|B'_G|}\sum_{b \in B'_G}\left(\mub_{b}(S)-q(S)\right)\left(\mub_{b}(S')-q(S')\right) =& \frac{1}{|B'_G|}\sum_{b \in B'_G}\left(\mub_{b}(S)-\hatq_{B'}(S)\right)\left(\mub_{b}(S')-\hatq_{B'}(S')\right)\\
  &+\left(q(S)-\hatq_{B'}(S)\right)\left(q(S')-\hatq_{B'}(S')\right)\\
 & +\frac{1}{|B'_G|}\sum_{b \in B'_G}\left(\mub_{b}(S)-\hatq_{B'}(S)\right)\left(q(S')-\hatq_{B'}(S')\right)\\
 &+\frac{1}{|B'_G|}\sum_{b \in B'_G}\left(q(S)-\hatq_{B'}(S)\right)\left(\mub_{b}(S')-\hatq_{B'}(S')\right)\\
  &=\frac{1}{|B'_G|}\sum_{b \in B'_G}\left(\mub_{b}(S)-\hatq_{B'}(S)\right)\left(\mub_{b}(S')-\hatq_{B'}(S')\right)\\
  &+\left(q(S)-\hatq_{B'}(S)\right)\left(q(S')-\hatq_{B'}(S')\right).
\end{align*}
Therefore:
$$
\left|\widehat{\cov}_{S,S'}\left(B'_G\right)-\cov_{S,S'}(\hatq_{B'_G})\right| \leq \frac{242 d\epsilon \ln \left(\frac{ e}{\beta}\right)}{k}.
$$
Note that we also have:
\begin{equation}\label{eq:boundcovgoodbatchesbis}
    \left|\widehat{\cov}_{S,S'}\left(B'_G\right)-\cov_{S,S'}(q)\right| \leq \frac{176 d\epsilon \ln \left(\frac{ e}{\epsilon}\right)}{k}.
\end{equation}

The following Lemma gives condition 2.
\begin{lemma}
If $\left|B_{G}\right| \geq \frac{3d}{\epsilon^{2} \ln (e/ \epsilon)}$, then $\forall S,S' \subseteq[d]$ and $\forall B_{G}^{''} \subseteq B_{G}$ of size $\left|B_{G}^{''}\right| \leq\epsilon\left|B_{G}\right|$, with probability at least $ 1-2 e^{-d}$,
$$
\left|\sum_{b\in B_G^{''}}\Big[\mub_b(S)-q(S)\Big]\Big[\mub_b(S')-q(S')\Big]\right| \leq \frac{33\epsilon d |B_G| \ln ( e / \epsilon)}{k}.
$$
\end{lemma}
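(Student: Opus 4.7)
The argument follows the same Bernstein-plus-union-bound template as the proof of Lemma \ref{lem:covgoodbatches}, with two adjustments: the sum now runs over a small subset $B_G'' \subseteq B_G$ of cardinality $m \leq \epsilon|B_G|$ rather than (nearly) all of $B_G$, and the summands are not centered by their expectation $\cov_{S,S'}(q)$. First set $V_b(S,S') := [\mub_b(S)-q(S)][\mub_b(S')-q(S')]$ and decompose $\sum_{b \in B_G''} V_b = \sum_b (V_b - \mathbb{E} V_b) + |B_G''|\,\cov_{S,S'}(q)$. To control the mean term, Lemma \ref{lem:eqlaw} identifies $\sum_{j\in T}Z^b_i(j)$ with a sum of $|T|\leq d$ independent Bernoullis, hence $\text{Var}(\mub_b(T))\leq d/(4k)$; Cauchy--Schwarz then gives $|\cov_{S,S'}(q)| \leq d/(4k)$, so the mean contribution $|B_G''|\cdot|\cov_{S,S'}(q)|$ is at most $\epsilon|B_G|d/(4k)$, which is already well below the target bound.

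For the centered sum, I reuse the observation made in the proof of Lemma \ref{lem:covgoodbatches} that $(\mub_b(S)-q(S))/d$ is centered sub-Gaussian with proxy $O(1/(kd))$, so $(V_b - \mathbb{E} V_b)/d^2$ is sub-exponential with parameter $O(1/(kd))$. Bernstein's inequality applied to a sum of $m$ independent such terms then yields
\[
\mathbb{P}\!\left[\bigg|\sum_{b \in B_G''}(V_b-\mathbb{E} V_b)\bigg| \geq \tau d^2\right] \leq 2\exp\!\left(-c\min\!\bigl(\tau kd,\;\tau^2 k^2 d^2/m\bigr)\right).
\]
Choosing $\tau = C\epsilon|B_G|\ln(e/\epsilon)/(kd)$ makes both arguments of the minimum at least of order $\epsilon|B_G|\ln(e/\epsilon)$ (using $m \leq \epsilon|B_G|$), and the associated deviation $\tau d^2 = C\epsilon d|B_G|\ln(e/\epsilon)/k$ combines with the mean contribution $\epsilon|B_G|d/(4k)$ to produce a bound of the desired form with leading constant $C+1/4$.

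Finally, take a union bound over the $4^d \leq e^{2d}$ pairs $(S,S')$ and over the $\sum_{j\leq\epsilon|B_G|}\binom{|B_G|}{j} \leq \exp(2\epsilon|B_G|\ln(e/\epsilon))$ admissible subsets $B_G''$ (this subset count is controlled exactly as in the proof of Lemma \ref{lem:meangoodbatches}). Using the hypothesis $|B_G|\geq 3d/(\epsilon^2\ln(e/\epsilon))$, which implies $\epsilon|B_G|\ln(e/\epsilon) \geq 3d/\epsilon\geq 3d$, the Bernstein exponent $cC\epsilon|B_G|\ln(e/\epsilon)$ absorbs both $2d$ and $2\epsilon|B_G|\ln(e/\epsilon)$ for $C$ a sufficiently large absolute constant, so the overall failure probability is at most $2e^{-d}$, and $C$ can be chosen so that the final constant in the bound does not exceed $33$. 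The main obstacle is purely bookkeeping: correctly pinning down the sub-exponential parameter of $V_b/d^2$ as $O(1/(kd))$ rather than the looser $O(1/k)$ that would follow from the worst-case range bound $|\mub_b(S)-q(S)|\leq d$. This sharpening, which rests on the variance bound $\text{Var}(\mub_b(S))\leq d/(4k)$ from Lemma \ref{lem:eqlaw}, is what keeps the Bernstein rate large enough to swallow the $\exp(2\epsilon|B_G|\ln(e/\epsilon))$ cost of the subset union bound without inflating the leading constant.
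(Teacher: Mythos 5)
Your proposal is correct and follows essentially the same route as the paper: decompose the sum as a centered part plus $|B_G''|\,\cov_{S,S'}(q)$, apply Bernstein's inequality to the sub-exponential summands $U_b(S,S') = \bigl(\frac{\mub_b(S)-q(S)}{d}\bigr)\bigl(\frac{\mub_b(S')-q(S')}{d}\bigr)-\frac{\cov_{S,S'}(q)}{d^2}$, and then union-bound over the $2^{2d}$ pairs $(S,S')$ and the at most $e^{3\epsilon|B_G|\ln(e/\epsilon)}$ small subsets. The one step you assert without fully verifying --- that the Bernstein constant $C$ can be taken large enough to absorb the union-bound cost while keeping the final leading constant $\leq 33$ --- is exactly what the paper's explicit constants ($32$ from Bernstein plus $1$ from the mean term, against a failure exponent of $4\epsilon|B_G|\ln(e/\epsilon)$) are there to demonstrate, so the bookkeeping does close.
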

\begin{proof}:
For any $S,S' \subseteq[d]$ and any $B_{G}^{\prime} \subseteq B_{G}$ Bernstein's inequality gives:
$$
\begin{aligned}
\mathbb{P}\bigg[\bigg|\sum_{b \in B^{''}_{G}} U_{b}(S,S')\bigg| \geq 32 \epsilon\left|B_{G}\right| \frac{\ln (e/ \epsilon)}{kd}\bigg] & \leq 2 e^{-4\epsilon\left|B_{G}\right| \ln (e/ \epsilon)}\\
\end{aligned}.
$$
We have :
\begin{align*}
    \left|\sum_{b\in B_G^{''}}\Big[\mub_b(S)-q(S)\Big]\Big[\mub_b(S')-q(S')\Big]\right| &= \left|\sum_{b \in B^{''}_{G}} d^2U_{b}(S,S')+|B^{''}_G|\cov_{S,S'}(q)\right|\\
    &\leq \left|\sum_{b \in B^{''}_{G}} d^2U_{b}(S,S')\right|+\epsilon\frac{d|B_G|}{k}.
\end{align*}

A union bound over all the possible $B^{''}_G$ and the $2^{2d}$ sets $S,S'$ terminates the proof.

\end{proof}

Combining the three Lemmas of the section gives Lemma \ref{lem:essentialproperties}.

\subsection{Proof of Lemma \ref{lem:vartoerror}, Variance gap to estimation error}\label{app:vartoerror}

\textit{Proof}: By condition 1 and Cauchy-Schwartz:
\begin{align}\label{eq:decobound}
   \left| \hatq_{B'}(S)-q(S)\right|& \leq \frac{1}{|B'|}\left|\sum_{b \in B'_G}\mub_b(S)-q(S)\right|+\frac{1}{|B'|}\left|\sum_{b \in B'_A}\mub_b(S)-q(S)\right|\nonumber\\
   & \leq 6\epsilon \sqrt{\frac{d\ln (e/ \epsilon)}{k}}+ \sqrt{\frac{|B'_A|}{|B'|}}\sqrt{\frac{1}{|B'|}\sum_{b \in B'_A}\Big[\mub_b(S)-q(S)\Big]^2}.
\end{align}

We can decompose the second term:
\begin{align*}
  \frac{1}{|B'|}\sum_{b \in B'_A}\Big[\mub_b(S)-q(S)\Big]^2& =\frac{1}{|B'|}\sum_{b \in B'}\Big[\mub_b(S)-q(S)\Big]^2-\frac{1}{|B'|}\sum_{b \in B'_G}\Big[\mub_b(S)-q(S)\Big]^2.
\end{align*}
By Lemma \ref{lem:covgoodbatches},
$$
\bigg|\frac{1}{|B'_G|}\sum_{b \in B'_G}\Big[\mub_b(S)-q(S)\Big]^2-\var_S(q)\bigg|\leq 140\frac{\epsilon d\ln (e/ \epsilon)}{k}.
$$
Thus, 
\begin{align*}
    \frac{1}{|B'|}\sum_{b \in B'_G}\Big[\mub_b(S)-q(S)\Big]^2& = \frac{|B'_G|}{|B'|}\frac{1}{|B'_G|}\sum_{b \in B'_G}\Big[\mub_b(S)-q(S)\Big]^2\\
    &\geq (1-2\epsilon) \left(\var_S(q)-140\frac{\epsilon d\ln (e/ \epsilon)}{k}\right)\\
    &\geq \var_S(q)-2\epsilon\var_S(q)-140\frac{\epsilon d\ln (e/ \epsilon)}{k}\\
    &\geq \var_S(\hatq_{B'})-15\frac{|\hatq_{B'}(S)-q(S)|}{k}-142\frac{\epsilon d\ln (e/ \epsilon)}{k},
\end{align*}
where the last inequality comes from Lemma \ref{lem:C_lip} and $\var_S(q)\leq d/k$.
Now, we have
\begin{align*}
    |\hatq_{B'}(S)-\hatq_{B'_G}(S)|&\leq \Bigg|\left(\frac{1}{|B'_G|}-\frac{1}{|B'|}\right)\sum_{b \in B'_G}\mub_b(S)\Bigg|+\Bigg|\frac{1}{|B'|}\sum_{b \in B'\setminus B'_G}\mub_b(S)\Bigg|\\
    &\leq \frac{2d\epsilon}{1-\epsilon}\leq 3d\epsilon.
\end{align*}

Thus,
\begin{align*}
    \frac{|\hatq_{B'}(S)-q(S)|}{k}&\leq\frac{|\hatq_{B'_G}(S)-q(S)|}{k}+\frac{|\hatq_{B'}(S)-\hatq_{B'_G}(S)|}{k}\\
    &\leq \frac{6\epsilon}{k} \sqrt{\frac{d\ln (e/ \epsilon)}{k}}+\frac{3d\epsilon}{k}\leq 3\frac{d\epsilon}{k}\ln (e/ \epsilon).
\end{align*}
This implies
\begin{equation}\label{eq:BGtoVar}
    \frac{1}{|B'|}\sum_{b \in B'_G}\Big[\mub_b(S)-q(S)\Big]^2
    \geq \var_S(\hatq_{B'})-187\frac{\epsilon d\ln (e/ \epsilon)}{k}.
\end{equation}

On the other hand,
\begin{align*}
    \frac{1}{|B'|}\sum_{b \in B'}\Big[\mub_b(S)-q(S)\Big]^2=& ~\frac{1}{|B'|}\sum_{b \in B'}\Big[\mub_b(S)-\hatq_{B'}(S)\Big]^2\\ &+\Big[q(S)-\hatq_{B'}(S)\Big]^2+2\Big[q(S)-\hatq_{B'}(S)\Big]\frac{1}{|B'|}\sum_{b \in B'}\Big[\mub_b(S)-\hatq_{B'}(S)\Big]\\
    =& ~\frac{1}{|B'|}\sum_{b \in B'}\Big[\mub_b(S)-\hatq_{B'}(S)\Big]^2 +\Big[q(S)-\hatq_{B'}(S)\Big]^2.
\end{align*}

Combining this equation with equations \ref{eq:BGtoVar} and \ref{eq:decobound} gives

\begin{align*}
   \left| \hatq_{B'}(S)-q(S)\right|& \leq \frac{1}{|B'|}\left|\sum_{b \in B'_G}\mub_b(S)-q(S)\right|+\frac{1}{|B'|}\left|\sum_{b \in B'_A}\mub_b(S)-q(S)\right|\nonumber\\
   & \leq 6\epsilon \sqrt{\frac{d\ln (e/ \epsilon)}{k}}+ \sqrt{2\epsilon}\sqrt{\hatvar_S(B')-\var_S(\hatq_{B'})+187\frac{\epsilon d\ln (e/ \epsilon)}{k}+\Big[q(S)-\hatq_{B'}(S)\Big]^2}\\
  & \leq 26\epsilon \sqrt{\frac{d\ln (e/ \epsilon)}{k}}+\sqrt{2\epsilon\left|\hatvar_S(B')-\var_S(\hatq_{B'})\right|}+\sqrt{2\epsilon}\left| \hatq_{B'}(S)-q(S)\right|.
\end{align*}
Noting that $2\epsilon \leq 1/8$ terminates the proof:
$$
\left| \hatq_{B'}(S)-q(S)\right|\leq 30\epsilon \sqrt{\frac{d\ln (e/ \epsilon)}{k}}+2\sqrt{\epsilon\left|\hatvar_S(B')-\var_S(\hatq_{B'})\right|}.
$$\hfill\( \Box \)

\subsection{Proof of Lemma \ref{lem:matrixexpression}, Matrix expression}\label{app:matrixexpression}

For each batch $b \in B$, define matrix $C_{b,B'}^{EV}$ as:
\begin{equation}\label{eq:defCb}
 \hatC_{b,B'}(j, l)=\Big[\mub_{b}(j)-\hatq_{B^{\prime}}(j)\Big]\Big[\mub_{b}(l)-\hatq_{B^{\prime}}(l)\Big], \ \forall (j,l) \in [d]^2 .   
\end{equation}

For each collection of batches $B'$ define
$$
\hatC(B') =\frac{1}{\left|B^{\prime}\right|}\sum_{b \in B'}\hatC_{b}.
$$
For a set $S\subseteq [d]$, define $\mathds{1}_S$ as the indicator vector of the elements in $S$. For any $S,S' \subseteq [d]$
\begin{align*}
    \left<\hatC(B'),\mathds{1}_S\mathds{1}_{S'}^T\right>&=\frac{1}{\left|B^{\prime}\right|}\sum_{b \in B'}\sum_{j\in S}\sum_{l \in S'}\Big[\mub_{b}(j)-\hatq_{B^{\prime}}(j)\Big]\Big[\mub_{b}(l)-\hatq_{B^{\prime}}(l)\Big]\\
    &=\frac{1}{\left|B^{\prime}\right|}\sum_{b \in B'}\left(\sum_{j\in S}\mub_{b}(j)-\sum_{j\in S}\hatq_{B^{\prime}}(j)\right)\left(\sum_{l\in S'}\mub_{b}(l)-\sum_{l\in S'}\hatq_{B^{\prime}}(l)\right)\\
    &=\widehat{\cov}_{S,S'}\left(B'\right).
\end{align*}

We can compute 
\begin{align*}
    \mathbb{E}\left[\sum_{j \in S}Z(j)\bigg | X\right]&= \lambda|S|\mathds{1}_{X \not \in S}+\left(\lambda(|S|-1)+1-\lambda\right)\mathds{1}_{X  \in S}\\
    &=\lambda|S|+\left(1-2\lambda\right)\mathds{1}_{X  \in S}.
\end{align*}
For a set $S$, let us define $Y_S= \big(\sum\limits_{j \in S}Z(j)\big)-q(S)$ and $\Delta_S=\lambda|S|-q(S)$. For any sets $S,S' \subseteq [d]$ s.t. $S\cap S'= \emptyset$, we have:
\begin{align*}
    \mathbb{E}\left[Y_SY_{S'}\right]&=\mathbb{E}\left[\mathbb{E}\left[Y_S\big | X\right]\mathbb{E}\left[Y_{S'}\big | X\right]\right]\\
    &=\mathbb{E}\left[\left(\Delta_S+\left(1-2\lambda\right)\mathds{1}_{X  \in S}\right)\left(\Delta_{S'}+\left(1-2\lambda\right)\mathds{1}_{X  \in S'}\right)\right]\\
    &=\Delta_{S'}\Delta_{S}+\Delta_{S}(1-2\lambda) p(S')+\Delta_{S'}(1-2\lambda) p(S) ~~  \text{ since $S\cap S'= \emptyset$}\\
    &=-\Delta_{S'}\Delta_S ~~~~ \text{ since by \eqref{eq:qS} we have $(1-2\lambda) p(S)=-\Delta_S$}.
\end{align*}

On the other hand, using the notation from Lemma \ref{lem:eqlaw}, we have:
\begin{align*}
    \mathbb{E}\left[Y_S^2\right] &= \mathbb{E}\bigg[\bigg(\sum_{j=1}^{|S|-1}(b_j - \E b_j)+b^S - \E b^S\bigg)^2\bigg] = \sum_{j=1}^{|S|-1}\V[b_j]+\V[b^S] \\
    &=(|S|-1)\lambda(1-\lambda)+\left(\lambda+(1-2\lambda)p(S)\right)\left(1-\lambda-(1-2\lambda)p(S)\right)\\
    &=(|S|-1)\lambda(1-\lambda)+\left(\lambda-\Delta_S\right)\left(1-\lambda+\Delta_S\right)\\
    &=-\Delta_S^2+|S|\lambda(1-\lambda)-(1-2\lambda)\Delta_S.
\end{align*}
For any $S,S' \subseteq [d]$, we thus have:
\begin{align*}
    \mathbb{E}\left[Y_SY_{S'}\right]&=    \mathbb{E}\left[\left(Y_{(S\cap S')}+Y_{(S\setminus S')}\right)\left(Y_{(S\cap S')}+Y_{(S'\setminus S)}\right)\right]\\
    &=\mathbb{E}\left[Y_{(S\cap S')}^2\right]+\mathbb{E}\left[Y_{(S\cap S')}Y_{(S\setminus S')}\right]+\mathbb{E}\left[Y_{(S\cap S')}Y_{(S'\setminus S)}\right]+\mathbb{E}\left[Y_{(S\setminus S')}Y_{(S'\setminus S)}\right]\\
    &=-\left(\Delta_{(S\cap S')}+\Delta_{(S\setminus S')}\right)\left(\Delta_{(S\cap S')}+\Delta_{(S'\setminus S)}\right)+|S\cap S'|\lambda(1-\lambda)-(1-2\lambda)\Delta_{(S\cap S')}\\
    &=-\Delta_{S}\Delta_{S'}+|S\cap S'|\lambda(1-\lambda)-(1-2\lambda)\Delta_{(S\cap S')}. \\
\end{align*}

For a vector $q$, define 
\begin{equation}\label{def_C}
    k\C(q)= -\left(\lambda\mathds{1}-q\right)\left(\lambda\mathds{1}-q\right)^T+\lambda(1-\lambda)I_d-(1-2\lambda)\text{Diag}(\lambda\mathds{1}-q).
\end{equation}

 For any two sets $S,S' \subseteq [d]$, we have: $
\mathbb{E}\left[Y_SY_{S'}\right]= \mathds{1}_S^Tk\C(q)\mathds{1}_{S'}$, so that $\mathbb{E}\left[\hatcov_{S,S'}(B'_G)\right]=\mathds{1}_S^T\C(q)\mathds{1}_{S'}$.
We now define: 
\begin{equation}\label{def_covSS}
    \cov_{S,S'}(B') := \mathds{1}_S^T\C(\hatq_{B'})\mathds{1}_{S'}.
\end{equation}

\subsection{Proof of Lemma \ref{lem:Groetendickapprox}, Grothendieck's inequality corollary }\label{app:Groetendickapprox}

\begin{proof}[Proof of Lemma \ref{lem:Groetendickapprox}]
\begin{itemize}
    \item For the first inequality, fix any $x,y \in \{0,1\}^d$ and three orthonormal vectors $e_0, e_1, e_2 \in \R^d$. Define the following vectors:
    \begin{align*}
        \forall j \in \{1,\dots, d\}: u^{(j)} = \begin{cases} e_0 ~~ \text{ if } x_j = 1,\\
        e_1 ~~ \text{ otherwise},
        \end{cases}
        ~~ \text{ and } ~~
         v^{(j)} = \begin{cases} e_0 ~~ \text{ if } y_j = 1,\\
        e_2 ~~ \text{ otherwise}.
        \end{cases}
    \end{align*}
    Then the matrix $M = \left[\langle u^{(i)}, v^{(j)}\rangle \right]_{ij}$ belongs to $\G$ and we have by construction $M = xy^T$ which proves the first inequality.
    \item For the second inequality, we have by Grothendieck's inequality
    \begin{align*}
        \max_{M \in \mathcal{G}} \langle M,A \rangle \leq 2 \max_{x,y \in \{\pm 1\}^d}\langle xy^T,A \rangle.
    \end{align*}
    For all $a \in \R$, define $a^+ = a \lor 0$ and $a^- = (-a) \lor 0$ and for all vector $x \in \R^d$, define $x^+ = (x_j^+)_j$ and $x^- = (x_j^-)_j$. Note that if $x \in \{\pm 1\}^d$, then $x^+, x^- \in \{0,1\}^d$. We therefore have:
    \begin{align*}
        \max_{x,y \in \{\pm 1\}^d}\langle xy^T,A \rangle &= \max_{x,y \in \{\pm 1\}^d} \big|\langle x^+y^{+T},A \rangle - \langle x^-y^{+T},A \rangle - \langle x^+y^{-T},A \rangle + \langle x^-y^{-T},A \rangle\big|\\
        & \leq 4 \max_{a,b\in \{0,1\}^d} \big|\langle ab^T,A \rangle\big|,
    \end{align*}
    which proves the second inequality.
\end{itemize}
\end{proof}

\subsection{Proof of Lemma \ref{lem:goodvsbad}, Score good vs. adversarial batches}\label{app:goodvsbad}

We first note that the Lemma implies the desired property for the batches in $B^o$, namely that each batch deletion has a probability at least $\frac{3}{4}$ of removing an adversarial batch. Indeed, we have:
$$
\sum_{b \in B^o} \varepsilon_b = \sum_{b \in B^o_G} \varepsilon_b +\sum_{b \in B^o_A} \varepsilon_b.
$$
If we had $\sum\limits_{b \in B^o_A} \varepsilon_b<7\sum\limits_{b \in B^o_G} \varepsilon_b$, this would imply:
$$
\sum_{b \in B^o} \varepsilon_b< 8\sum_{b \in B^o_G} \varepsilon_b<\sum_{b \in B'_A} \varepsilon_b,
$$
where the last inequality comes from the Lemma. However this is in contradiction with the definition of $B^o$, which is the sub-collection of $\epsilon|B|$ batches with top $\varepsilon_b$ scores, since $|B'_A| \leq \epsilon|B|$. We therefore have that $\sum\limits_{b \in B^o_A} \varepsilon_b\geq 7\sum\limits_{b \in B^o_G} \varepsilon_b$ hence $\sum\limits_{b \in B^o_G} \varepsilon_b \leq \frac{1}{8}\sum\limits_{b \in B^o} \varepsilon_b$. Denote by $B^o(t)$ the current set obtained from $B^o$ after having removed $t$ batches (and before $\sum\limits_{b \in B^o} \varepsilon_b$ has been halved). We keep deleting batches from $B^o$ until $\sum\limits_{b \in B^o(t)} \varepsilon_b \leq \frac{1}{2}\sum\limits_{b \in B^o} \varepsilon_b$. At each step, we therefore have that $\sum\limits_{b \in B^o_G} \varepsilon_b \leq \frac{1}{4}\sum\limits_{b \in B^o} \varepsilon_b$ hence the probability of removing a good batch from $B^o(t)$ is always less than $\frac{3}{4}$.

\paragraph{Subcase 1} We first prove the Lemma in the case where $\max_{S \subseteq [d]}\left|\hatq_{B'}(S)-\lambda |S|\right|\geq 11$. 

We have:
\begin{align*}
    |\hatq_{B'}(S)-\lambda |S||&\leq \frac{1}{|B'|}\; \Bigg| \sum_{b \in B'_G}\mub_b(S)-\lambda |S|\Bigg|+\frac{1}{|B'|} \Bigg| \sum_{b \in B'_A}\mub_b(S)-\lambda |S|\Bigg|\\
    &\leq \frac{|B'_G|}{|B'|} \frac{1}{|B'_G|} \;\Bigg| \sum_{b \in B'_G}\mub_b(S)-q(S)\Bigg|+ \frac{|B'_G|}{|B_G|}\left|\lambda |S|-q(S)\right|+\frac{1}{|B'|} \Bigg| \sum_{b \in B'_A}\mub_b(S)-\lambda |S|\Bigg|\\
     &\leq 6\epsilon\sqrt{\frac{d\ln(e/\epsilon)}{k}}+1+\frac{1}{|B'|} \Bigg| \sum_{b \in B'_A}\mub_b(S)-\lambda |S|\Bigg|  ~~ \text{ by equation \eqref{concent_first_moment}.}
\end{align*}
Let $S^*= \argmax_{S \subseteq [d]}|\hatq_{B'}(S)-\lambda |S||$. We have:
$$
\Bigg| \sum_{b \in B'_A}\mub_b(S^*)-\lambda |S^*|\Bigg| \geq 9(1-2\epsilon) |B_G|.
$$
On the other hand, by equation \ref{eq:meansmallcollection}, we have for any $B^{''}_G$ s.t. $|B''_G|\leq \epsilon |B_G|$:
\begin{align*}
    \Bigg| \sum_{b \in B^{''}_G}\mub_b(S^*)-\lambda |S^*|\Bigg|&\leq \Bigg| \sum_{b \in B^{''}_G}\mub_b(S^*)-q(S)\Bigg|+|B^{''}_G|\\
    &\leq (\epsilon+2\epsilon\sqrt{\frac{d\ln(e/\epsilon)}{k}})|B_G|\\
    &\leq (1+ \epsilon)|B_G|.
\end{align*}
Thus we have:
$$
\frac{\left| \sum_{b \in B'_A}\mub_b(S^*)-\lambda |S^*|\right|}{\left| \sum_{b \in B^{''}_G}\mub_b(S^*)-\lambda |S^*|\right|}\geq \frac{9(1-2\epsilon)}{1+\epsilon}> 8.
$$
\paragraph{Subcase 2} In the case where $\max_{S \subseteq [d]}\left|\hatq_{B'}(S)-\lambda |S|\right|\leq 11$, the proof relies on the following intermediary Lemma \ref{lem:intergoodvsbad}.
\begin{lemma}\label{lem:intergoodvsbad}
If conditions 1 and 2 hold, then, for any $B' \subset[B]$, for any to sets $S,S'$:
$$
\left(\tau_{B'}-11\sqrt{\tau_{B'}}-1313\right)\frac{\epsilon d \ln(e/\epsilon)}{k} \leq\frac{1}{|B'|}\sum_{b \in  B'_A}\langle M^*,\hatC_{b,B'} \rangle.
$$
\end{lemma}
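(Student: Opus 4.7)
The plan is to rearrange the definition of the contamination rate, $\langle M^*_{B'}, \hatC(B') - \C(\hatq_{B'})\rangle = \tau_{B'}\,\epsilon d\ln(e/\epsilon)/k$, so as to isolate the adversarial piece of the sum defining $\hatC(B')$. Splitting $\hatC(B') = \frac{1}{|B'|}\sum_{b \in B'_G}\hatC_{b,B'} + \frac{1}{|B'|}\sum_{b \in B'_A}\hatC_{b,B'}$ and taking the inner product against $M^*_{B'}$ gives
\begin{equation*}
\frac{1}{|B'|}\sum_{b \in B'_A}\langle M^*_{B'}, \hatC_{b,B'}\rangle \;=\; \tau_{B'}\frac{\epsilon d\ln(e/\epsilon)}{k} \;-\; \langle M^*_{B'},\, R\rangle,\quad R := \frac{1}{|B'|}\sum_{b \in B'_G}\hatC_{b,B'} - \C(\hatq_{B'}).
\end{equation*}
It therefore suffices to prove $|\langle M^*_{B'}, R\rangle| \leq (11\sqrt{\tau_{B'}} + 1313)\,\epsilon d\ln(e/\epsilon)/k$.

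I would first re-express the good-batch sum about the good-batch mean $\hatq_{B'_G}$. Setting $\Delta_G := \hatq_{B'_G} - \hatq_{B'}$ and expanding the product defining $\hatC_{b,B'}$, the cross-terms vanish by definition of $\hatq_{B'_G}$, yielding $\frac{1}{|B'|}\sum_{b \in B'_G}\hatC_{b,B'} = \tfrac{|B'_G|}{|B'|}\hatC(B'_G) + \tfrac{|B'_G|}{|B'|}\Delta_G\Delta_G^T$. Substituting, $-R$ decomposes as
\begin{equation*}
-R \;=\; \bigl[\C(\hatq_{B'}) - \tfrac{|B'_G|}{|B'|}\C(\hatq_{B'_G})\bigr] \;+\; \tfrac{|B'_G|}{|B'|}\bigl[\C(\hatq_{B'_G}) - \hatC(B'_G)\bigr] \;-\; \tfrac{|B'_G|}{|B'|}\Delta_G\Delta_G^T.
\end{equation*}

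Each summand is then bounded via Lemma \ref{lem:Groetendickapprox}, which guarantees $|\langle M^*_{B'}, A\rangle| \leq 8\max_{S,S'\subseteq [d]}|\langle \indS{S}\indS{S'}^T, A\rangle|$ for any matrix $A$. For the middle summand, the concentration bound \eqref{concent_second_moment} of condition 1 gives entrywise control of order $d\epsilon\ln(e/\epsilon)/k$, a purely additive contribution. For the $\Delta_G\Delta_G^T$ summand, Grothendieck reduces the estimate to $8\max_S|\Delta_G(S)|^2$; the subcase hypothesis $\max_S|\hatq_{B'}(S) - \lambda|S|| \leq 11$, combined with the analogous control $\max_S|\hatq_{B'_G}(S) - \lambda|S|| \leq 1 + o(1)$ derived from \eqref{eq:qS} and \eqref{concent_first_moment}, forces $|\Delta_G(S)| = O(1)$, again contributing a constant absorbed into the additive $1313$. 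Finally, the first summand is split as $[\C(\hatq_{B'}) - \C(\hatq_{B'_G})] + (1-\tfrac{|B'_G|}{|B'|})\C(\hatq_{B'_G})$: the second piece is $O(\epsilon)$ times a bounded quantity, whereas the first is handled by Lipschitzness of $q\mapsto \C(q)$ (Lemma \ref{lem:C_lip}) fed with the displacement bound $\max_S|\hatq_{B'}(S) - \hatq_{B'_G}(S)| \lesssim \sqrt{\tau_{B'}}\,\epsilon\sqrt{d\ln(e/\epsilon)/k}$, itself a direct consequence of \eqref{eq:corruptionscoretoerror} and \eqref{concent_first_moment}. Grothendieck then delivers the desired $O(\sqrt{\tau_{B'}})\,\epsilon d\ln(e/\epsilon)/k$ bound, which is the origin of the $11\sqrt{\tau_{B'}}$ term. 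Adding the three contributions yields the claimed estimate.

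The main obstacle is the Lipschitz step for $\C(\hatq_{B'}) - \C(\hatq_{B'_G})$: one has to track precisely how the $\sqrt{\tau_{B'}}$-sized deviation $\Delta_G$ propagates through both the quadratic and the linear pieces of $\C$ in \eqref{def_C}, and then through the $M^*_{B'}$ inner product by means of Grothendieck's corollary, without losing the correct order in $\sqrt{\tau_{B'}}$. The remaining pieces are immediate applications of the concentration facts \eqref{concent_first_moment}, \eqref{concent_second_moment}, the subcase hypothesis, and Lemma \ref{lem:Groetendickapprox}.
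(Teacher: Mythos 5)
Your overall plan is sound and structurally parallel to the paper's: isolate $\frac{1}{|B'|}\sum_{b\in B'_A}\langle M^*,\hatC_{b,B'}\rangle$ from the defining identity $\langle M^*, D_{B'}\rangle = \tau_{B'}\gamma$ (where $\gamma := \epsilon d\ln(e/\epsilon)/k$), and bound the good-batch remainder by combining the concentration properties, the Lipschitzness of $\C(\cdot)$, and Lemma~\ref{lem:Groetendickapprox}. The paper pivots the decomposition around the true $\C(q)$ while you pivot around $\hatq_{B'_G}$, but both work, and your rank-one identity $\frac{1}{|B'|}\sum_{b\in B'_G}\hatC_{b,B'} = \frac{|B'_G|}{|B'|}\hatC(B'_G) + \frac{|B'_G|}{|B'|}\Delta_G\Delta_G^T$ is correct.

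However, there is a genuine error in the treatment of the $\Delta_G\Delta_G^T$ summand. You bound $8\max_S|\Delta_G(S)|^2$ using only $|\Delta_G(S)| = O(1)$ (from the subcase hypothesis), and declare this ``a constant absorbed into the additive $1313$.'' That gives a contribution of order $1$, whereas what must be shown is a bound of order $\gamma = \epsilon d\ln(e/\epsilon)/k$, which can be arbitrarily small; an $O(1)$ bound is simply incomparable to $1313\gamma$. The correct argument must reuse, for this term too, the sharper displacement bound you already invoke for the Lipschitz step: by \eqref{concent_first_moment} and \eqref{eq:corruptionscoretoerror}, $|\Delta_G(S)| \leq (36 + 2\sqrt{\tau_{B'}})\,\epsilon\sqrt{d\ln(e/\epsilon)/k}$, so $8\max_S|\Delta_G(S)|^2 \leq 8(36+2\sqrt{\tau_{B'}})^2\,\epsilon\,\gamma$. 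This is not a pure additive constant times $\gamma$: it carries a residual $\tau_{B'}\epsilon\gamma$ piece. With $\epsilon \leq 1/100$ its coefficient is small (of order $0.32$), so it degrades but does not destroy the coefficient of $\tau_{B'}$ in the final inequality --- exactly as happens in the paper's own tally, where the conclusion has the form $(0.66\,\tau_{B'} - O(\sqrt{\tau_{B'}}) - O(1))\gamma$ rather than a coefficient of exactly $1$. Your proposal as written loses this bookkeeping, and the $O(1)$ shortcut is not salvageable without it.

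Two smaller points: the lemma requires the unstated hypothesis $|B'_G|\geq(1-2\epsilon)|B_G|$ and the subcase~2 hypothesis $\max_S|\hatq_{B'}(S)-\lambda|S||\leq 11$ (to invoke \eqref{eq:corruptionscoretoerror} and Lemma~\ref{lem:C_lip}); you implicitly use both but should state them. Also note that you only need a one-sided bound $\langle M^*, R\rangle \leq (\cdots)\gamma$, which follows from $M^*\in\G$ and Lemma~\ref{lem:Groetendickapprox}; your argument does provide this, but it is worth making explicit since the corruption scores $\langle M^*,\hatC_{b,B'}\rangle$ are not sign-definite.
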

\begin{proof}: In this proof only, we use the shorthand:
$$
\gamma:=\frac{\epsilon d \ln(e/\epsilon)}{k}.
$$
We have
$$
\langle M^*,D_{B'}\rangle= \langle M^*,\hatC(B')-\C(q)\rangle+\langle M^*,\C(q)-\C(\hatq_{B'} \rangle.
$$
We analyse separately each term.
For any $S',S$, according to Lemmas \ref{lem:C_lip} and equation \ref{eq:corruptionscoretoerror}, we have:
\begin{align*}
    \left|\text{Cov}_{S,S'}(\hatq_{B'})-\text{Cov}_{S,S'}(q)\right|&\leq \frac{11}{k}\max_{S^{''}}|\hatq_{B'}(S^{''})-q(S^{''})|\\
    &\leq\frac{(330+22\sqrt{\tau_{B'}})}{k} \epsilon \sqrt{\frac{d\ln (e/ \epsilon)}{k}}\\
    &\leq (330+22\sqrt{\tau_{B'}}) \gamma \sqrt{\frac{ 1}{d \ln(e/\epsilon)k}}\\
    &\leq (96+7\sqrt{\tau_{B'}}) \gamma .
\end{align*}
Where the last line come from $d \geq 3$, $\epsilon \leq \frac{1}{20}$. Thus, by Lemma \ref{lem:Groetendickapprox}, we have:
\begin{align}\label{eq:qtohatq}
     \argmax_{M \in \G} \langle M,\C(q)-\C(\hatq_{B'}) \rangle&\leq 8\max_{S,S'} \left|\text{Cov}_{S,S'}(\hatq_{B'})-\text{Cov}_{S,S'}(q)\right| \nonumber\\
     &\leq \frac{88}{k}\left(30+2\sqrt{\tau_B'}\right)\epsilon\sqrt{\frac{d\ln(e/\epsilon)}{k}}\nonumber\\
     &\leq (763+51\sqrt{\tau_{B'}})\gamma.
\end{align}

On the other hand,
\begin{align*}
    \hatC(B')-\C(q)&= \frac{1}{|B'|}\sum_{b\in B'}\hatC(b,B')-\C(q)\\
    &=\frac{1}{|B'|}\sum_{b\in B'_G}\hatC(b,B')-\C(q)+\frac{1}{|B'|}\sum_{b\in B'_A}\hatC(b,B')-\C(q).\\
\end{align*}
From Lemma \ref{lem:Groetendickapprox} and \ref{lem:covgoodbatches}, we have:
\begin{align*}
    \bigg|\Big\langle M^*, \frac{1}{|B_G'|}\sum_{b\in B'_G}\hatC(b,B')-\C(q)\Big\rangle\bigg|&\leq\bigg|\Big\langle M^*, \frac{1}{|B_G'|}\sum_{b\in B'_G}\hatC(b,B'_G)-\hatC(b,B')\Big\rangle\bigg|\\
    &+\bigg|\Big\langle M^*, \frac{1}{|B_G'|}\sum_{b\in B'_G}\hatC(b,B'_G)-\C(q)\Big\rangle\bigg|.
\end{align*}
We start by bounding the first term $A=\bigg|\Big\langle M^*, \frac{1}{|B_G'|}\sum_{b\in B'_G}\hatC(b,B'_G)-\hatC(b,B')\Big\rangle\bigg|$. By Lemma \ref{lem:Groetendickapprox}:
\begin{align*}
    A&\leq \frac{8}{|B_G'|}\max_{S,S' \in [d]} \bigg|\sum_{b \in B'_G}\left[\hatq_b(S)-\hatq_{B'}(S)\right]\left[\hatq_b(S')-\hatq_{B'}(S')\right]-\left[\hatq_b(S)-\hatq_{B'_G}(S)\right]\left[\hatq_b(S')-\hatq_{B'_G}(S')\right]\bigg|\\
    &= \frac{8}{|B_G'|}\max_{S,S' \in [d]} \bigg|\left[\hatq_{B'_G}(S)-\hatq_{B'}(S)\right]\left[\hatq_{B'_G}(S')-\hatq_{B'}(S')\right]\bigg|.
\end{align*}
By equation \ref{eq:contaminationtoerror} and condition 1, for any $S\subseteq[d]$, we have:
\begin{align*}
    \left|\hatq_{B'_G}(S)-\hatq_{B'}(S)\right| &\leq \left|\hatq_{B'_G}(S)-q(S)\right| +\left|q(S)-\hatq_{B'}(S)\right|\\
    &\leq \left(36 + 2 \sqrt{\tau_{B'}}\right)\epsilon\sqrt{\frac{d\ln(e/\epsilon)}{k}}.
\end{align*}
Thus,
\begin{align*}
    A&\leq 8\left(36 + 2 \sqrt{\tau_{B'}}\right)^2 \epsilon \gamma .
\end{align*}
By equation \ref{eq:boundcovgoodbatchesbis} and Lemma \ref{lem:Groetendickapprox}, we have:
$$
\bigg|\Big\langle M^*, \frac{1}{|B_G'|}\sum_{b\in B'_G}\hatC(b,B'_G)-\C(q)\Big\rangle\bigg| \leq 1408 \gamma.
$$
Thus:
\begin{align*}
    \bigg|\Big\langle M^*, \frac{1}{|B'|}\sum_{b\in B'_G}\hatC(b,B')-\C(q)\Big\rangle\bigg|&= \frac{B_G'}{|B'|}\bigg|\Big\langle M^*, \frac{1}{|B_G'|}\sum_{b\in B'_G}\hatC(b,B')-\C(q)\Big\rangle\bigg|\\
    &\leq 1408\gamma +8\left(36 + 2 \sqrt{\tau_{B'}}\right)^2 \epsilon \gamma .
\end{align*}

Finally, for any $q$, we have:
\begin{align*}
    \langle M^*, C(q)\rangle &\leq 8 \max_{S,S'} \cov_{S,S'}(q)\\
    &\leq \frac{8d}{k}.
\end{align*}
This gives:
\begin{align*}
    \left|\langle M^*,\hatC(B')-\C(q)\rangle\right| &\leq \left|\langle M^*,\frac{1}{|B'|}\sum_{b\in B'_G}\hatC(b,B')-\C(q)\rangle\right|+\left|\langle M^*,\frac{1}{|B'|}\sum_{b\in B'_A}\hatC(b,B')\rangle\right|\\
    & \quad +\frac{|B'_A|}{|B'|}\left|\langle M^*,\C(q)\rangle\right|\\
    &\leq 1408\gamma +8\left(36 + 2 \sqrt{\tau_{B'}}\right)^2 \epsilon \gamma +\frac{\epsilon}{1-2\epsilon}\frac{8d}{k}+\left|\langle M^*,\frac{1}{|B'|}\sum_{b\in B'_A}\hatC(b,B')\rangle\right|\\
    &\leq 1409\gamma +8\left(36 + 2 \sqrt{\tau_{B'}}\right)^2 \epsilon \gamma+\left|\langle M^*,\frac{1}{|B'|}\sum_{b\in B'_A}\hatC(b,B')\rangle\right|.
\end{align*}
We can now combine this with equations \ref{eq:qtohatq}:
\begin{align*}
    \tau_{B'}\gamma&= \langle M^*,D_{B'}\rangle\\
    &\leq \left|\langle M^*,\hatC(B')-\C(q)\rangle\right|+\left|\langle M^*,\C(q)-\C(\hatq_{B'} \rangle\right|\\
    &\leq 2200\gamma +8\left(36 + 2 \sqrt{\tau_{B'}}\right)^2 \epsilon \gamma+51 \sqrt{\tau_{B'}}\gamma+\frac{1}{|B'|}\bigg|\langle M^*,\sum_{b\in B'_A}\hatC(b,B') \rangle\bigg|.
\end{align*}
Thus:
$$
\Bigg|\langle M^*,\sum_{b\in B'_A}\hatC(b,B') \rangle\Bigg|\geq (1-2\epsilon)\left[(1-32\epsilon)\tau_{B'}-(\epsilon1152+51)\sqrt{\tau_{B'}}-2200-8*36^2\epsilon\right]|B_G|\gamma
$$

With $\epsilon \leq 1/100$, we get:
$$
\Bigg|\langle M^*,\sum_{b\in B'_A}\hatC(b,B') \rangle\Bigg|\geq (0.66 \tau_{B'}-62\sqrt{\tau_{B'}}-2260)|B_G|\gamma
$$

\end{proof}

On the other hand, for any collection of good batches $B^{''}_G\subseteq B'$ s.t. $|B^{''}_G|\leq \epsilon |B_G|$, we have by Lemma \ref{lem:Groetendickapprox}:
\begin{align*}
    \sum_{b \in  B^{''}_G}\langle M^*,\hatC_{b,B'} \rangle &\leq 8 \max_{S,S' \in [d]} \sum_{b \in B^{''}_G}\langle\indS{S} \indS{S'}^{T},\hatC_{b,B'} \rangle \\
    &= 8 \max_{S,S' \in [d]} \sum_{b \in B^{''}_G}\Big[\mub_b(S)-\hatq_{B'}(S)\Big]\Big[\mub_b(S')-\hatq_{B'}(S')\Big].
\end{align*}

We can decompose the terms in the sum:
\begin{align*}
    \Big[\mub_b(S)-\hatq_{B'}(S)\Big]\Big[\mub_b(S')-\hatq_{B'}(S')\Big]&= \Big[\mub_b(S)-q(S)\Big]\Big[\mub_b(S')-q(S')\Big]+ \Big[q(S)-\hatq_{B'}(S)\Big]\Big[q(S')-\hatq_{B'}(S')\Big]\\
    &+ \Big[\mub_b(S)-q(S)\Big]\Big[q(S')-\hatq_{B'}(S')\Big]+\Big[q(S)-\hatq_{B'}(S)\Big]\Big[\mub_b(S')-q(S')\Big].
\end{align*}
By condition 1:
$$
\max_{S,S' \in [d]} \sum_{b \in B^{''}_G}\Big[\mub_b(S)-q(S)\Big]\Big[\mub_b(S')-q(S')\Big] \leq 33 |B_G|\gamma.
$$
By equation \ref{eq:corruptionscoretoerror},
\begin{align*}
 \max_{S,S' \in [d]} \sum_{b \in B^{''}_G}  \Big[q(S)-\hatq_{B'}(S)\Big]\Big[q(S')-\hatq_{B'}(S')\Big] \leq |B^{''}_G|(33+2\sqrt{\tau_{B'}})^2\epsilon  \gamma.
\end{align*}
By equations \ref{eq:meansmallcollection} and \ref{eq:corruptionscoretoerror}, 
\begin{align*}
    \max_{S,S' \in [d]} \sum_{b \in B^{''}_G} \Big[q(S)-\hatq_{B'}(S)\Big]\Big[\mub_b(S')-q(S')\Big] &=\max_{S,S' \in [d]}| B^{''}_G|\left(\hatq_{B^{''}_G}(S')-q(S')\right)\Big[q(S)-\hatq_{B'}(S)\Big].\\
    &\leq 2(33+2\sqrt{\tau_{B'}})|B_G|\epsilon \gamma.
\end{align*}

Combining the three bounds we have:
\begin{align*}
    \sum_{b \in  B^{''}_G}\langle M^*,\hatC_{b,B'} \rangle &\leq  8|B^{''}_G|(33+2\sqrt{\tau_{B'}})^2\epsilon  \gamma+32(33+2\sqrt{\tau_{B'}})|B_G|\epsilon \gamma+264 |B_G|\gamma\\
    &\leq 8|B_G|(33+2\sqrt{\tau_{B'}})^2\epsilon^2  \gamma+32(33+2\sqrt{\tau_{B'}})|B_G|\epsilon \gamma+264 |B_G|\gamma\\
    &\leq \left[32\tau_{B'}\epsilon^2 +(1056\epsilon^2+64)\sqrt{\tau_{B'}}+(1056\epsilon+264) \right]|B_G|\gamma.
\end{align*}

Which gives with $\epsilon\leq 1/100$:
\begin{align*}
    \sum_{b \in  B^{''}_G}\langle M^*,\hatC_{b,B'} \rangle &\leq (0.0032\tau_{B'}+65 \sqrt{\tau_{B'}}+275) |B_G|\gamma.
\end{align*}
Thus, we have:
\begin{align*}
    \frac{\sum_{b \in  B^{'}_A}\langle M^*,\hatC_{b,B'} \rangle}{\sum_{b \in  B^{''}_G}\langle M^*,\hatC_{b,B'} \rangle}\geq \frac{0.66 \tau_{B'}-62\sqrt{\tau_{B'}}-2260}{0.02\tau_{B'}+65 \sqrt{\tau_{B'}}+275}.
\end{align*}

With $\sqrt{\tau_{B'}}\geq 200$,
\begin{align*}
    \frac{\sum_{b \in  B^{'}_A}\langle M^*,\hatC_{b,B'} \rangle}{\sum_{b \in  B^{''}_G}\langle M^*,\hatC_{b,B'} \rangle}\geq 8.
\end{align*}
\subsection{Auxiliary Lemmas}

\begin{lemma}[Covariance is Lipschitz]\label{lem:C_lip}
Let $q,q' \in \R^d$ and define $\epsilon = q' - q$. For any $S, S' \subset [d]$, ~ if $~\big|\epsilon(S)\big| \lor \big|\epsilon(S')\big| \leq 12$, then
$$ \left|\cov_{S,S'}\left(q\right)-\cov_{S,S'}\left(q'\right)\right| \leq \frac{15}{k} \max\left(\big|\epsilon(S)\big|, \big|\epsilon(S')\big|\right). $$
\end{lemma}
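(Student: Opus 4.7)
The plan is to use the explicit matrix form of $\C(q)$ given by equation \eqref{def_C} to expand $\cov_{S,S'}(q)$ in closed form, subtract the analogous expression for $q'$, and bound each resulting term. Using $\cov_{S,S'}(q) = \mathds{1}_S^\top \C(q) \mathds{1}_{S'}$ and the three-term decomposition of $k\C(q)$, I obtain
\[k\,\cov_{S,S'}(q) \;=\; -\Delta_S \Delta_{S'} \;+\; \lambda(1-\lambda)\,|S\cap S'| \;-\; (1-2\lambda)\,\Delta_{S \cap S'},\]
where $\Delta_T := \lambda|T| - q(T)$ for any $T \subseteq [d]$. The middle term does not depend on $q$ and cancels in the difference. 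Writing $\Delta'_T = \Delta_T - \epsilon(T)$, the bilinear identity $\Delta'_S \Delta'_{S'} - \Delta_S \Delta_{S'} = -\epsilon(S')\Delta_S - \epsilon(S)\Delta_{S'} + \epsilon(S)\epsilon(S')$ yields
\[k\bigl[\cov_{S,S'}(q)-\cov_{S,S'}(q')\bigr] \;=\; -\epsilon(S')\Delta_S - \epsilon(S)\Delta_{S'} + \epsilon(S)\epsilon(S') - (1-2\lambda)\,\epsilon(S\cap S').\]

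To finish, I bound the four summands by a constant multiple of $\max(|\epsilon(S)|,|\epsilon(S')|)$. The first two are handled by the structural bound $|\Delta_T| \le 1$, which holds whenever $q$ is the $q$-vector produced by the RAPPOR channel on a probability distribution (in that case $\Delta_T = -(1-2\lambda)p(T) \in [-1,0]$); this gives $|\epsilon(S')\Delta_S| \le |\epsilon(S')|$ and $|\epsilon(S)\Delta_{S'}| \le |\epsilon(S)|$, each dominated by $\max(|\epsilon(S)|,|\epsilon(S')|)$. The quadratic term is controlled by splitting off one of the hypotheses $|\epsilon(S)|\vee|\epsilon(S')|\le 12$: $|\epsilon(S)\epsilon(S')| \le 12\,\max(|\epsilon(S)|,|\epsilon(S')|)$. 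The fourth term is bounded using $1-2\lambda\le1$ together with control of $|\epsilon(S\cap S')|$ by the same maximum. Summing yields the multiplicative constant $1+1+12+1 = 15$, giving the claim after dividing by $k$.

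The main obstacle is the control of the last term: in full generality $|\epsilon(S\cap S')|$ is \emph{not} dominated by $\max(|\epsilon(S)|,|\epsilon(S')|)$, because cancellations between $S\cap S'$ and the symmetric difference $S\triangle S'$ can make both partial sums small while $\epsilon$ remains large on $S\cap S'$. Every invocation of the lemma in this paper is, however, made under the stronger uniform hypothesis $\max_{T\subseteq[d]} |\epsilon(T)| \le 12$ (arising from equation \eqref{concent_first_moment}, Lemma \ref{lem:meangoodbatches}, or equation \eqref{eq:corruptionscoretoerror}), and under this reading the bound on $|\epsilon(S\cap S')|$ slots into the same maximum and the constant $15$ goes through. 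The only nontrivial work in the proof is the algebraic reduction to the four-term identity above; the numerical bookkeeping is then immediate.
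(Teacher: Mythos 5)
Your decomposition is identical to the paper's: both reduce $\cov_{S,S'}(q)-\cov_{S,S'}(q')$ to the same four-term expression involving $\epsilon(S')\Delta_S$, $\epsilon(S)\Delta_{S'}$, $\epsilon(S)\epsilon(S')$, and $(1-2\lambda)\epsilon(S\cap S')$, and both then bound the summands term by term with the same constants.

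The obstacle you flag is real, and it is present in the paper's own proof as well. The paper's last display bounds $|(1-2\lambda)\,\epsilon(S\cap S')|$ by $|\epsilon(S)|$, which does not hold in general: take $\epsilon=(1,-1,1,-1)$, $S=\{1,2\}$, $S'=\{1,4\}$, so that $\epsilon(S)=\epsilon(S')=0$ while $\epsilon(S\cap S')=1$. Nothing in the stated hypothesis $|\epsilon(S)|\lor|\epsilon(S')|\le 12$ controls the diagonal piece. Your proposed repair --- replacing $\max\big(|\epsilon(S)|,|\epsilon(S')|\big)$ by $\max_{T\subseteq[d]}|\epsilon(T)|$ in both hypothesis and conclusion --- is the right one, and it matches every invocation in the paper: either $S=S'$ (Lemma \ref{lem:vartoerror}, where $S\cap S'=S$ and the issue vanishes) or the bound is already quoted as $\frac{c}{k}\max_{S''}|\hatq_{B'}(S'')-q(S'')|$ (Lemma \ref{lem:intergoodvsbad} and the display after equation \eqref{eq:boundcovgoodbatchesbis}). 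Your further caveat that $|\Delta_T|\le 1$ is not automatic for arbitrary $q\in\R^d$ but holds for the RAPPOR pushforward $q=Qp$ (so that $\Delta_T=-(1-2\lambda)p(T)\in[-1,0]$ by equation \eqref{eq:qS}) is also correct and is exactly what the paper's proof invokes, even though the lemma nominally allows any $q,q'\in\R^d$. In short, your argument is sound under the restated hypothesis, and you have correctly located a genuine imprecision in both the lemma's statement and the paper's proof.
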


\begin{proof}[Proof of Lemma \ref{lem:C_lip}]
By equation~\eqref{eq:qS}, we have $\left|\Delta_S\right| \leq 1$ for all $S \subset [d]$. Therefore, by Lemma \ref{lem:matrixexpression} and equation \eqref{def_C}:
\begin{align*}
    \left|\cov_{S,S'}\left(q\right)-\cov_{S,S'}\left(q'\right)\right| &= \left|\left<\indS{S}\indS{S'}^T,\C(q)- \C(q')\right>\right| \\
    & = \frac{1}{k} \left|\left<\indS{S}\indS{S'}^T,qq^T - (q+\epsilon) (q+\epsilon)^T +\lambda \mathds{1}\epsilon^T +\lambda \epsilon\mathds{1}^T + (1-2\lambda)\text{Diag}(\epsilon) \right>\right|\\
    & = \frac{1}{k}\left|\epsilon(S)\Delta_{S'} + \epsilon(S')\Delta_{S} + (1-2\lambda)\epsilon(S\cap S')- \epsilon(S) \epsilon(S')\right|\\
    & \leq \frac{1}{k} \Big( \big|\epsilon(S)\big| + \big|\epsilon(S')\big| + \big|\epsilon(S)\big| + 12\big|\epsilon(S)\big|\Big)\\
    &\leq \frac{15}{k}\max \Big(|\epsilon(S)|, |\epsilon(S')|\Big).
\end{align*}
\end{proof}

If $22\epsilon\sqrt{\frac{d \ln(e/\epsilon}{k}}\geq 1$, the proven bound for the algorithm is trivially true.
Else, whenever condition 1 holds, we have for any $|B'_G|\geq (1-2 \epsilon)|B_G|$:
$$
\max |\hatq_{B'_G}(S)-q(S)| \leq 1.
$$
Thus, Lemma \ref{lem:C_lip} may be applied to $\cov_{S,S'}(\hatq_{B'_G})-\cov_{S,S'}(q)$.

\section{Proof of Corollary \ref{cor:true_estim}}\label{proof_cor_actual_estim}

\begin{lemma}\label{lem:norm_equal_sup_sets}
Let $p \in \mathcal{P}_d$ and $p' \in \R^d$. Then $\sup\limits_{S \subseteq[d} |p(S)-p'(S)| \leq \|p - p'\|_1 \leq 2 \sup\limits_{S \subseteq[d} |p(S)-p'(S)|$.
\end{lemma}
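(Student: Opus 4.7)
The plan is to give a short, direct proof based on the Hahn-type decomposition of the signed vector $\Delta := p - p' \in \R^d$. Write $\Delta(S) := \sum_{j\in S} \Delta_j = p(S) - p'(S)$ for any $S \subseteq [d]$, so the goal becomes showing $\sup_S |\Delta(S)| \leq \|\Delta\|_1 \leq 2\sup_S |\Delta(S)|$. Neither inequality actually requires $p \in \mathcal{P}_d$; this is just a property of signed vectors. I will not invoke that hypothesis.

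For the lower bound, I would simply apply the triangle inequality: for every $S \subseteq [d]$,
\[
|\Delta(S)| = \bigg|\sum_{j \in S}\Delta_j\bigg| \leq \sum_{j \in S}|\Delta_j| \leq \sum_{j \in [d]}|\Delta_j| = \|\Delta\|_1,
\]
and then take the supremum over $S$.

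For the upper bound, I would exploit the sign split. Let $S^+ = \{j \in [d] : \Delta_j \geq 0\}$ and $S^- = [d]\setminus S^+$. Then by definition of the $\ell_1$ norm,
\[
\|\Delta\|_1 = \sum_{j \in S^+}\Delta_j \;-\; \sum_{j \in S^-}\Delta_j \;=\; \Delta(S^+) + \big(-\Delta(S^-)\big) = |\Delta(S^+)| + |\Delta(S^-)|,
\]
since $\Delta(S^+) \geq 0$ and $\Delta(S^-) \leq 0$. Bounding each of the two nonnegative summands by $\sup_{S \subseteq [d]} |\Delta(S)|$ finishes the proof.

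There is no real obstacle: this is a one-paragraph argument that only uses the triangle inequality and a sign decomposition. The only mild subtlety is remembering that $p'$ need not lie in $\mathcal{P}_d$, so one should avoid any step that relies on $\Delta([d]) = 0$ (which would hold if $p' \in \mathcal{P}_d$ but not in general); the decomposition above works regardless.
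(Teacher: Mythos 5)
Your proof is correct and matches the paper's argument exactly: the lower bound via the triangle inequality, and the upper bound via the sign decomposition $S^+ = \{j : p_j \geq p'_j\}$, $S^- = (S^+)^c$, which is precisely the set $A$, $A^c$ used in the paper. Your added remark that the argument avoids assuming $\Delta([d]) = 0$ is a correct observation but not a point of divergence.
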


\begin{proof}[Proof of Lemma \ref{proof_cor_actual_estim}]
The first inequality follows from the triangle inequality. For the second one, letting $A = \{j \in [d]: p_j \geq p'_j\}$, we have: $\|p-p'\|_1 = p(A) - p'(A) + p'(A^c) - p(A^c) \leq 2 \sup\limits_{S \subseteq[d} |p(S)-p'(S)|$.
\end{proof}

\begin{proof}[Proof of Corollary \ref{cor:true_estim}]
Let $\widehat{p}$ be the output of Algorithm \ref{alg:estimation_proc} and $\widehat{p}^* = \frac{\widehat{p}}{\|\widehat{p}\|_1}$. Then 
\begin{align*}
    \|p - \widehat{p}^*\|_1 &\leq \|\widehat{p} - p\|_1 + \|\widehat{p} - \widehat{p}^*|_1 =  \|\widehat{p} - p\|_1 + \left|\|\widehat{p}\|_1 - 1\right| \leq 2 \|p-\widehat{p}\|_1.
\end{align*}
\end{proof}
\section{Lower bound: Proof of Proposition \ref{LB_Robustness_Privacy}}\label{sec:proof_LB}

For any two probability distributions $p,q$ over some measurable space $(\mathcal{X}, \mathcal{A})$, we denote by
$$ \chi^2(p||q) = \begin{cases} \int_{\mathcal{X}} \frac{p}{q} \, dp -1 \text{ if } p\ll q\\ +\infty \text{ otherwise } \end{cases} $$
the $\chi^2$ divergence between $p$ and $q$. We start with the following Lemma.

\begin{lemma}\label{lem:lower_bound}
Assume $d\geq 3$. There exists an absolute constant $c>0$ such that for all estimator $\hat{p}$ and all $\alpha$-LDP mechanism $Q$, there exists a probability vector $p \in \mathcal{P}_d$ satisfying
$$ \mathbb{E}\left[\sup_{z' \in \mathcal{C}(Z)} \big\| \hat{p}(z')-p\big\|_1\right]  \geq c\left\{\left(\frac{d}{\alpha\sqrt{k n}} + \frac{\epsilon \sqrt{d}}{\alpha \sqrt{k}}\right) \land 1\right\},$$
where the expectation is taken over all collections of $n'$ clean batches $Z^1, \dots, Z^{n'}$ where $Z^b = (Z_1^b,\dots, Z_k^b)$ and $Z_l^b \overset{iid}{\sim} Qp$.
\end{lemma}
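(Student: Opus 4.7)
The plan is to prove the two summands separately by two-point Le Cam arguments and then combine them using $a + b \leq 2(a \vee b)$. The first term $(d/(\alpha\sqrt{kn})) \land 1$ is the classical privacy-only minimax rate of Duchi, Jordan and Wainwright, which already holds in the absence of contamination; the authors' alternative proof in Appendix \ref{app:proofLBprivacyNoOutliers} can be invoked verbatim. All of the novelty concerns the second term $(\epsilon\sqrt{d}/(\alpha\sqrt{k})) \land 1$, which I treat via an adversarial-indistinguishability argument.

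\textbf{Indistinguishability reduction.} Given $p, q \in \mathcal{P}_d$ such that $TV(Qp^{\otimes k}, Qq^{\otimes k}) \leq \epsilon$, I claim the adversary can produce $n\epsilon$ contaminated batches that make the observed $n$-batch law under $p$ coincide with the clean $n$-batch law under $q$. Indeed, a maximal coupling of $(Qp)^{\otimes k}$ and $(Qq)^{\otimes k}$ agrees with probability $\geq 1-\epsilon$, so among the $n' = n(1-\epsilon)$ clean batches drawn under $p$, an expected fraction $\epsilon$ can be ``relabeled'' into $(Qq)^{\otimes k}$-batches using the adversary's budget of $n\epsilon$ extra batches; a standard symmetrization/interchange step (identical to the one used in Qiao--Valiant) turns this into an exact distributional equality. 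A one-line Le Cam two-point bound then forces the estimator's worst-case expected $\ell_1$ error to be at least $\|p-q\|_1 / 2$ for one of the two parameters. It therefore suffices to construct, for every $\alpha$-LDP mechanism $Q$, a pair $(p,q)$ with $\|p-q\|_1 \gtrsim (\epsilon\sqrt{d}/(\alpha\sqrt{k})) \land 1$ and $TV(Qp^{\otimes k}, Qq^{\otimes k}) \leq \epsilon$. By tensorization of the $\chi^2$ divergence (Tsybakov, Lemma 2.7), $TV(Qp^{\otimes k}, Qq^{\otimes k})^2 \leq \frac{k}{2}\chi^2(Qp \| Qq)$, so it even suffices to ensure $\chi^2(Qp\|Qq) \lesssim \epsilon^2/k$.

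\textbf{Spectral construction.} A direct calculation (expanding $\chi^2$ and writing $Qp - Qq$ as a linear functional of $\Delta = p-q$) yields $\chi^2(Qp\|Qq) = \Delta^\top \Omega\, \Delta$, with $\Omega$ the PSD matrix displayed in the introduction. The $\alpha$-LDP constraint gives $|Q(z\mid i)/Q(z\mid 1) - 1| \leq e^\alpha - 1 \lesssim \alpha$ pointwise, hence $\mathrm{Tr}(\Omega) = \sum_i \int (Q(z\mid i)/Q(z\mid 1) - 1)^2 Q(z\mid 1)\, dz \lesssim d\alpha^2$. Applying Markov's inequality to the spectrum of $\Omega$, at most $\lceil d/3\rceil$ eigenvalues exceed $3\alpha^2$, so the span $V$ of the $\lceil 2d/3\rceil$ smallest eigenvectors has dimension $\gtrsim d$ and satisfies $\Delta^\top \Omega \Delta \leq 3\alpha^2 \|\Delta\|_2^2$ for every $\Delta \in V$. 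Within $V \cap \mathds{1}^\perp$ (still of dimension $\gtrsim d$, needed so that $p$ and $q$ can both sum to $1$) I then select $\Delta$ with $\|\Delta\|_2 \asymp \epsilon/(\alpha\sqrt{k})$ (which delivers $\chi^2(Qp\|Qq) \lesssim \epsilon^2/k$) and with $\|\Delta\|_1 \gtrsim \sqrt{d}\,\|\Delta\|_2$, using either a random-sign argument or a Kashin-type construction on $V \cap \mathds{1}^\perp$. Setting $p = |\Delta|/\|\Delta\|_1$ and $q = p - \Delta/\|\Delta\|_1$, then readjusting scales and truncating so that the $\land 1$ cap handles the saturated regime, gives $\|p-q\|_1 \gtrsim (\epsilon\sqrt{d}/(\alpha\sqrt{k})) \land 1$, as required.

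\textbf{Main obstacle.} The delicate step is the geometric one: ensuring that for \emph{every} $Q$, the eigenspace $V$ produced by the spectral argument contains a vector that is simultaneously orthogonal to $\mathds{1}$ and has $\ell_1/\ell_2$ ratio of order $\sqrt{d}$. Generic subspaces of $\mathbb{R}^d$ need not contain such delocalized vectors, and the $\sqrt{d}$ inflation over the non-private rate $\epsilon/\sqrt{k}$ of Qiao--Valiant comes exactly from this $\ell_2 \to \ell_1$ conversion. Everything else---the trace bound on $\Omega$, the $\chi^2$ identity, the $\chi^2$ tensorization, the adversary's coupling, and the final Le Cam bound---is either a short calculation or standard.
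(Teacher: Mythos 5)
Your approach is essentially the paper's: a Le Cam two-point argument via indistinguishable pairs $(p,q)$, the reduction to $\chi^2(Qp\|Qq) \lesssim \epsilon^2/k$, the quadratic-form representation $\chi^2 \approx \Delta^\top \Omega\, \Delta$ with the trace bound $\mathrm{Tr}(\Omega) \lesssim d\alpha^2$, Markov on the spectrum to find a large eigenspace $V$ with small eigenvalues, intersection with $\mathds{1}^\perp$, and a final $\ell_2$-to-$\ell_1$ conversion yielding $\|\Delta\|_1 \gtrsim \sqrt{d}\|\Delta\|_2$. Two small clarifications are worth making, since you leave one step as an open ``main obstacle.''

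First, the step you flag as delicate is not actually a potential failure mode. You worry that ``generic subspaces of $\mathbb{R}^d$ need not contain such delocalized vectors'' and propose Kashin-type constructions, which concern \emph{random} subspaces; but the lower-bound argument needs a universal statement for the eigenspace generated by an arbitrary $Q$. The fact required is both milder and universal: \emph{every} linear subspace $V \subset \mathbb{R}^d$ of dimension $m \gtrsim d$ contains some vector $v$ with $\|v\|_1 \gtrsim \sqrt{m}\,\|v\|_2$. The paper proves exactly this as Lemma~\ref{norm_1_geq_norm_2}, by taking $X \sim \mathcal{N}(0, \Pi_V)$: since $\mathbb{E}\|X\|_1 = \sqrt{2/\pi}\sum_i \sqrt{\Pi_V(i,i)}$ with $\sum_i \Pi_V(i,i) = m$ and $\Pi_V(i,i) \leq 1$, a convexity argument shows $\mathbb{E}\|X\|_1 \gtrsim m$, while $\|X\|_2 \lesssim \sqrt{m}$ with high probability, so the supremum of $\|v\|_1/\|v\|_2$ over $V$ is at least $\CL{\ref{norm_1_geq_norm_2}}\sqrt{m}$. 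So your construction closes unconditionally and the $\sqrt{d}$ inflation has no hidden hypothesis. Second, a minor point of precision: the paper actually derives the inequality $\chi^2(Qp\|Qq) \leq e^{\alpha}\,\Delta^\top\Omega\,\Delta$ (bounding the denominator $Qq(z)$ from below using the LDP ratio constraint), not an exact identity; since $\alpha \leq 1$ this only costs an absolute constant, but you should not present it as an equality.
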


This Lemma is the analog of Proposition \ref{LB_Robustness_Privacy} but with the guarantee in expectation rather than with high probability. We first prove this Lemma before moving to the proof of Proposition \ref{LB_Robustness_Privacy}.\\

\begin{proof}[Proof of Lemma \ref{lem:lower_bound}]
We first show that $R_{n,k}^*(\alpha, \epsilon, d) \geq c\left(\frac{d}{\alpha\sqrt{k n}} \land 1\right)$ for some  small enough absolute constant $c>0$. Informally, this amounts to saying that the estimation problem under both contamination and privacy is more difficult than just under privacy. Formally:
\begin{align*}
    R_{n,k}^*(\alpha, \epsilon, d) = \inf_{ \hat{p}, Q} \; \sup_{p \in \mathcal{P}_d} \; \mathbb{E}\left[\sup_{z' \in \mathcal{C}(Z))} \big\| \hat{p}(z')-p\big\|_1\right] \geq \inf_{ \hat{p}, Q} \; \sup_{p \in \mathcal{P}_d} \; \mathbb{E}\left[ \big\| \hat{p}-p\big\|_1\right] \geq c\left(\frac{d}{\alpha\sqrt{k n}} \land 1\right),
\end{align*}
where the last inequality follows from \cite{duchi2014local} Proposition 6. We also give a simpler proof of this fact in Appendix \ref{app:proofLBprivacyNoOutliers}, using Assouad's lemma.\\

%The term $\frac{d}{\alpha\sqrt{k n}} \land 1$ is the classical rate of multinomial estimation without outliers when observing $nk$ iid data from $p \in \mathcal{P}_d$ and privatized using an $\alpha$-LDP mechanism. 
%The problem of estimating $p$ under adversarial contamination is clearly more difficult than without contamination, hence $R_{n,k}^*(\alpha, \epsilon, d) \geq c\left(\frac{d}{\alpha\sqrt{k n}} \land 1\right)$ for some  small enough absolute constant $c>0$. 
We now prove $R_{n,k}^*(\alpha, \epsilon, d) \geq c\Big(\frac{\epsilon \sqrt{d}}{\alpha \sqrt{k}} \land 1\Big)$. 
For any $\alpha$-LDP mechanism $Q$ and probability vector $p \in \mathcal{P}_d$, denote by $Qp$ the density of the privatized random variable $Z$ defined by $Z | X \sim Q(\cdot|X)$ and by $Qp^{\otimes k}$ the density of the joint distribution of $k$ iid observations with distribution $Qp$. Define the set of pairs of probability vectors that are indistinguishable after privatization by $Q$ and adversarial contamination
\begin{equation}
    \Indist(Q) = \left\{(p,q) \in \mathcal{P}_d ~\big|~  TV(Qp^{\otimes k}, Qq^{\otimes k})\leq \epsilon\right\}.
\end{equation}
To derive the adversarial rate, it suffices to prove
\begin{equation}\label{program_adv_rate}
    \inf_{Q} \sup_{p,q \in \Indist(Q)} \|p-q\|_1 \geq c\left\{\frac{\epsilon \sqrt{d}}{\alpha \sqrt{k}} \land 1\right\}.
\end{equation}
To understand why \eqref{program_adv_rate} is a natural program to consider, fix an $\alpha$-LDP mechanism $Q$ and denote by $\left(\mathcal{Z}, \mathcal{U}, \nu\right)$ its image space. If $(p,q) \in \Indist(Q)$, then letting $$A = \frac{Qp^{\otimes k} \lor Qq^{\otimes k}}{1+ TV(Qp^{\otimes k}, Qq^{\otimes k})}, ~~~~ N^{(p)} = \frac{A- (1-\epsilon)Qp^{\otimes k}}{\epsilon}, ~~ \text{ and } N^{(q)} = \frac{A- (1-\epsilon)Qq^{\otimes k}}{\epsilon},$$
we can directly check that $A, N^{(p)}$ and $N^{(q)}$ are probability measures over $(\mathcal{Z}, \mathcal{U})$ (for $N^{(p)}$ and $N^{(q)}$, we use the fact that $(p,q) \in \Indist(Q)$ to prove that $N^{(p)}(dz) \geq 0$ and $N^{(q)}(dz) \geq 0$).
Moreover, it holds that $A = (1-\epsilon) Qp^{\otimes k} + \epsilon N^{(p)} = (1-\epsilon) Qq^{\otimes k} + \epsilon N^{(q)}$. 
This is exactly equivalent to saying that any clean family of $n$ batches with distribution $Qp^{\otimes k}$ or $Qp^{\otimes k}$ can be transformed into a $\epsilon$-contaminated family of $n$ batches with distribution $A$ through $\epsilon$ adversarial contamination. 
By observing such a contaminated family, it is therefore impossible to determine whether the underlying distribution is $p$ or $q$, so that the quantity $\|p-q\|_1/2$ is a lower bound on the minimax estimation risk. \\ 

We now prove \eqref{program_adv_rate}. For all $j \in \{1, \dots, d\}$ and $z \in \mathcal{Z}$, set 
\begin{align}
    q_j(z) &= \frac{Q(z|j)}{Q(z|1)} - 1, ~~~~  d\mu(z)= Q(z|1) d\nu(z), \label{def_q_mu}
\end{align}
and 
\begin{equation}\label{def_Omega}
    \Omega_Q = \left(\Omega_Q(j,j')\right)_{jj'} = \left(\int_{\mathcal{Z}} q_j(z) q_{j'}(z) d\mu(z)\right)_{ij}
\end{equation}
Given $Q$, we first prove that a sufficient condition for $(p,q)$ to belong to $\Indist(Q)$ is that $(p-q)^T \Omega (p-q) \leq C\epsilon^2/k$ for some small enough absolute constant $C>0$. Fix $p,q \in \mathcal{P}_d$ and define $\Delta = p-q$. By \cite{tsybakov2008introduction}, Section 2.4, we have
\begin{align}
    TV(Qp^{\otimes k}, Qq^{\otimes k}) \leq \sqrt{-1 + \left(1+ \chi^2(Qp || Qq)\right)^k}.\label{TV_chi2}
\end{align}
Now,
\begin{align*}
    \chi^2(Qp || Qq) &= {\displaystyle\int_{\mathcal{Z}}} \frac{\left(Qp(z) - Qq(z)\right)^2}{Qq(z)} dz = {\displaystyle\int_{\mathcal{Z}}} \frac{\left(\sum_{j=1}^d Q(z|j) \; \Delta_j\right)^2}{\sum_{j=1}^d Q(z|j) \; q_j} dz \\
    & = \int_{\mathcal{Z}} \frac{\left(\sum_{j=1}^d \Big(\frac{Q(z|j)}{ Q(z|1)} -1\Big) \; \Delta_j\right)^2}{ \sum_{j=1}^d \frac{Q(z|j)}{ Q(z|1)} \; q_j } \; Q(z|1)  d\nu(z) ~~~~ \text{ since } \sum_{j=1}^d \Delta_j = 0\\
    & \leq e^\alpha \int_{\mathcal{Z}} \sum_{j,j'=1}^d \Delta_j \Delta_{j'} q_j(z) q_{j'}(z) d\mu(z)\\
    & = e^\alpha \Delta^T \Omega_Q \Delta.
\end{align*}
Write $\Omega = \Omega_Q$ and assume that $\Delta^T \Omega_Q \Delta \leq C \epsilon^2/k$ for $C \leq e^{-2}$. Then equation \eqref{TV_chi2} yields:
\begin{align*}
    TV(Qp^{\otimes k}, Qq^{\otimes k}) &\leq \sqrt{-1 + \left(1+ e^\alpha \Delta^T \Omega \Delta\right)^k} \leq \sqrt{-1 + \exp\left(e^\alpha k \Delta^T \Omega \Delta\right)} \leq \sqrt{-1 + \exp\left(C e^\alpha \epsilon^2\right)} \leq \epsilon.
\end{align*}
Defining 
\begin{equation}
    \Indistkhi(Q) = \left\{(p,q) \in \mathcal{P} ~\Big|~ (p-q)^T \Omega (p-q) \leq \frac{C \epsilon^2}{k} \right\},
\end{equation}
it follows that $\Indistkhi(Q) \subset \Indist(Q)$ for all $Q$, so that
\begin{align*}
    \inf_{Q} \sup_{(p,q) \in \Indist(Q)} \|\Delta\|_1 \geq \inf_{Q} \sup_{(p,q) \in \Indistkhi(Q) } \|\Delta\|_1.
\end{align*}
Fix $Q$ and note that $\Omega_Q$ is symmetric and nonnegative. 
We sort its eigenvalues as $\{\lambda_1 \leq \dots \leq \lambda_d\}$ 
and denote by $v_1, \dots, v_d$ the associated eigenvectors. 
We also define $j_0 = \max\left\{j \in \{1,\dots, d\}: \lambda_j \leq 3e^2 \alpha^2\right\}$. 
Noting that $\forall j: |q_j| \leq e \alpha$ and that $\mu$ is a probability measure, we get that $Tr(\Omega) = \sum\limits_{j=1}^d \int_{\mathcal{Z}}q_j^2 d\mu \leq d e^2 \alpha^2$, 
so that $(d-j_0)3e^2\alpha^2 \leq d e^2 \alpha^2$ hence $j_0 \geq 2d/3$.\\

Let $H = \left\{x \in \R^d: x^T \mathbb
1 = 0\right\}$, and note that $V:=\text{span}(v_j)_{j\leq j_0} \cap H$ is of dimension at least $m=\frac{2d}{3}-1\geq \frac{d}{3}$. 
Therefore by Lemma \ref{norm_1_geq_norm_2}, there exists $\Delta \in V$ such that $\|\Delta\|_2^2 = \frac{C \epsilon^2}{2e^2\alpha^2k} \land \frac{1}{d}$ and $\|\Delta\|_1 \geq \CL{\ref{norm_1_geq_norm_2}} \sqrt{m}\|\Delta\|_2 \gtrsim \frac{\epsilon \sqrt{d}}{\alpha \sqrt{k}}$. 
Noting that over $\R^d$, $\|\cdot\|_1 \leq \sqrt{d}\|\cdot\|_2$, we also have $\|\Delta\|_1 \leq \frac{\epsilon}{\alpha \sqrt{k}}\land 1\leq 1$.\\

This allows us to define the following vectors: $p = \left(\frac{|\Delta_j|}{\|\Delta\|_1}\right)_{j=1}^d \in \mathcal{P}_d$ and $q = p-\Delta$. 
To check that $q \in \mathcal{P}_d$, note that the condition $\Delta^T \mathbb 1 = 0$ ensures that $q^T \mathbb 1 = 1$.
Moreover, for all $j \in \{1,\dots, d\}$ we have $q_j = \frac{|\Delta_j|}{\|\Delta\|_1} - \Delta_j \geq 0$ since $\|\Delta\|_1 \leq 1$.

Since by construction, we have $\Delta \Omega_Q \Delta \leq 2e^2 \alpha^2 \|\Delta\|_2^2 \leq \frac{C\epsilon^2}{k}$ and $p,q \in \mathcal{P}_d$, we have $(p,q) \in \Indistkhi(Q)$. For all $\alpha$-LDP mechanism $Q$, it therefore holds that $\sup\limits_{(p,q) \in \Indistkhi(Q)} \|\Delta\|_1 \gtrsim \frac{\epsilon \sqrt{d}}{\alpha \sqrt{k}} \land 1$. Taking the infimum over all $Q$, the result is proven.
\end{proof}

\begin{lemma}\label{norm_1_geq_norm_2}
There exists an absolute constant $\CL{\ref{norm_1_geq_norm_2}}$ such that for all $m \in \{\lceil \frac{d}{3}\rceil,\dots, d\}$ and all linear subspace $V \subset \R^d$ of dimension $m$, it holds:
$$ \sup_{v \in V} \frac{\|v\|_1}{\|v\|_2} \geq \CL{\ref{norm_1_geq_norm_2}} \sqrt{m}.$$ 
\end{lemma}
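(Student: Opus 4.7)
The plan is to exploit the duality between $\ell_1$ and $\ell_\infty$ together with a standard trace computation, which avoids any need for a delicate geometric argument and yields constant $C_{\ref{norm_1_geq_norm_2}} = 1$.

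First, I would rewrite the objective as a dual optimization. For any $v \in V$, $\|v\|_1 = \max_{\epsilon \in \{\pm 1\}^d} \langle \epsilon, v\rangle$. Let $P_V$ denote the orthogonal projection of $\R^d$ onto $V$. Since $v = P_V v$, we have $\langle \epsilon, v\rangle = \langle P_V \epsilon, v\rangle \leq \|P_V \epsilon\|_2 \, \|v\|_2$ by Cauchy--Schwarz, and the inequality is tight for $v = P_V \epsilon / \|P_V \epsilon\|_2 \in V$. Therefore
\[
\sup_{v \in V \setminus \{0\}} \frac{\|v\|_1}{\|v\|_2} \;=\; \max_{\epsilon \in \{\pm 1\}^d} \|P_V \epsilon\|_2.
\]

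Next I would bound the right-hand side from below by the probabilistic method. Let $\epsilon$ be uniform on $\{\pm 1\}^d$, so that $\E[\epsilon \epsilon^T] = I_d$. Then
\[
\E \|P_V \epsilon\|_2^2 \;=\; \E\bigl[ \epsilon^T P_V^T P_V \epsilon\bigr] \;=\; \mathrm{tr}\bigl(P_V^T P_V\bigr) \;=\; \mathrm{tr}(P_V) \;=\; m,
\]
using $P_V^2 = P_V = P_V^T$. Consequently there exists $\epsilon^* \in \{\pm 1\}^d$ with $\|P_V \epsilon^*\|_2^2 \geq m$, i.e.\ $\|P_V \epsilon^*\|_2 \geq \sqrt{m}$. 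Combining with the identity above yields $\sup_{v \in V} \|v\|_1 / \|v\|_2 \geq \sqrt{m}$, which proves the lemma with $C_{\ref{norm_1_geq_norm_2}} = 1$.

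Notably, this argument does not use the lower bound $m \geq \lceil d/3\rceil$ at all; that hypothesis comes from the way the lemma is invoked in the proof of the adversarial lower bound (to ensure that the subspace $V$ has dimension at least of order $d$). There is no real obstacle here: the only subtlety is recognizing that the supremum over the $\ell_2$-ball intersected with $V$ of $\|v\|_1$ is attained on a $\pm 1$ direction after projection, which is a standard consequence of the $\ell_1/\ell_\infty$ duality together with the convexity of $\|P_V w\|_2$ in $w$.
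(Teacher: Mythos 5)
Your proof is correct and takes a genuinely different, and in fact cleaner, route than the paper's. The paper samples a Gaussian $X \sim \mathcal{N}(0,\Pi_V)$, bounds $\E\|X\|_1$ from below by solving a constrained optimization over the diagonal of $\Pi_V$, and then controls a residual term $\E[\|X\|_1 \mathbb{1}\{\|X\|_2 > C\sqrt m\}]$ via truncation, Cauchy--Schwarz, and Chebyshev, yielding a non-explicit absolute constant and (as written) requiring $m \geq d/3$ to absorb the residual. Your argument instead passes to the dual: by $\ell_1/\ell_\infty$ duality and the fact that both the sup over $v \in V \cap B_2$ and the max over $w \in B_\infty$ are of a bilinear objective, the ratio equals $\max_{\epsilon \in \{\pm 1\}^d} \|P_V\epsilon\|_2$, and then a single trace identity $\E_\epsilon \|P_V\epsilon\|_2^2 = \operatorname{tr}(P_V) = m$ for Rademacher $\epsilon$ finishes. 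This needs no truncation, no Gaussian computations, and gives the sharp constant $C_{\ref{norm_1_geq_norm_2}} = 1$ (tight: take $V = \operatorname{span}(e_1,\dots,e_m)$). You are also right that the hypothesis $m \geq \lceil d/3 \rceil$ is irrelevant to the lemma itself and is only used downstream when the lemma is invoked; the paper's proof appears to use it only in its bookkeeping of the residual term, and your proof dispenses with it entirely. The one step worth spelling out a bit more when writing it up is the interchange $\sup_v \max_\epsilon = \max_\epsilon \sup_v$, but since both sides equal a joint supremum of a bilinear form over a product of compact sets, this is immediate.
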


\begin{proof}[Proof of Lemma \ref{norm_1_geq_norm_2}]
Let $V$ be a linear subspace of $\R^d$ of dimension $m$ and denote by $\Pi_V := \left(\Pi_V(i,j)\right)_{ij}$ the orthogonal projector onto $V$. Let $X \sim \mathcal{N}(0, \Pi_V)$. For some large enough absolute constant $C>0$ we have:
\begin{align}
    \sup_{v \in V} \frac{\|v\|_1}{\|v\|_2} &\geq \mathbb E\left[\frac{\|X\|_1}{\|X\|_2}\right] \geq \mathbb E\left[\frac{\|X\|_1}{\|X\|_2} \mathbb 1 \left\{\|X\|_2 \leq C\sqrt{m}\right\}\right] \nonumber\\
    & \geq \underbrace{\frac{1}{C\sqrt{m}} \mathbb E \left[\|X\|_1\right]}_{\text{Principal term}} ~ - ~ \underbrace{ \frac{1}{C\sqrt{m}}\mathbb E \left[\|X\|_1 \mathbb 1 \left\{\|X\|_2>C \sqrt{m}\right\}\right]}_{\text{Residual term}} \label{decomp_princ_resid}
\end{align}
We first analyze the principal term.
\begin{align*}
    \mathbb E \|X\|_1 = \sum_{i,j=1}^d \mathbb E|X_{ij}| = \sqrt{\frac{2}{\pi}}\sum_{i,j=1}^d  \left|\Pi_V(i,j)\right|^{1/2}
\end{align*}
Note that $\forall i,j \in \{1, \dots, d\}: |\Pi_V(i,j)| \leq 1$ and that $\sum\limits_{i,j=1}^d \Pi_V^2(i,j) = m$. Therefore:
\begin{align}
    \inf_{dim(V) = m}\sum_{i,j=1}^d \left|\Pi_V(i,j)\right|^{1/2} &\geq ~ \inf_{A \in \R^{d \times d}} \sum_{i,j=1}^d |a_{ij}|^{1/2}  ~\text{ s.t. } \begin{cases}\|A\|_2^2 = m \\\forall i,j: |a_{ij}| \leq 1. \end{cases}\nonumber\\
    &= \inf_{a \in \R^{d \times d}} \sum_{i,j=1}^d a_{ij}  ~\text{ s.t. } \begin{cases}\sum_{i,j=1}^d a_{ij}^4 = m \\\forall i,j: 0 \leq a_{ij} \leq 1. \end{cases}
\end{align}
The last optimization problem amounts to minimizing an affine function over a convex set, hence the solution, denoted by $(a_{ij}^*)_{ij}$, is attained on the boundaries of the domain. Therefore, $\forall i,j \in \{1,\dots, d\}: a_{ij}^* \in \{0,1\}$. It follows from $\sum_{ij} a_{ij}^4 = m$ that the family $a_{ij}^*$ contains exactly $m$ nonzero coefficients, which are all equal to $1$. Therefore, the value of the last optimization problem is $m$, which yields that the principal term is lower bounded by $\frac{\sqrt{m}}{C}$.\\

We now move to the residual term. Writing $X = \sum_{j=1}^m x_j e_j$ where $(e_j)_{j=1}^m$ is an orthonormal basis of $V$, we have:
\begin{align}
    \mathbb E \left[\big\|X\big\|_1 \mathbb 1 \left\{\big\|X\big\|_2>C \sqrt{m}\right\}\right]& \leq \sqrt{d} \,  \mathbb E \left[\big\|X\big\|_2 \mathbb 1 \left\{\big\|X\big\|_2>C \sqrt{m}\right\}\right] \leq \sqrt{d} \,  \left\{\mathbb E \left[\big\|X\big\|_2^2 \mathbb 1 \left\{\big\|X\big\|_2^2>C^2 m\right\}\right]\right\}^{1/2} \nonumber\\
    & \leq \sqrt{d} \left\{m \,\mathbb E\left[ x_1^2 \mathbb 1 \Bigg\{\sum_{j=1}^m x_j^2 \geq C^2 m\Bigg\}\right]\right\}^{1/2}. \label{cool}
\end{align}
Moreover
\begin{align}
    \mathbb E\left[ x_1^2 \mathbb 1 \Bigg\{\sum_{j=1}^m x_j^2 \geq C^2 m\Bigg\}\right] &\leq \mathbb E \bigg[ x_1^2 \; \mathbb 1 \left\{x_1 \geq C\right\}\bigg] + \mathbb E\left[ x_1^2 \; \mathbb 1 \Bigg\{\sum_{j=2}^m x_j^2 \geq C^2 (m-1)\Bigg\}\right] \nonumber\\
    & \leq  \mathbb E \bigg[ x_1^2 \; \mathbb 1 \left\{x_1 \geq C\right\}\bigg] + \mathbb E \left[ x_1^2 \right]  \mathbb P \Bigg(\Big|\sum_{j=2}^m x_j^2 - \mathbb E x_1^2\Big| \geq \big(C^2 - \mathbb E x_1^2\big)(m-1)\Bigg)\label{cool2}
\end{align}
By the dominated convergence Theorem, $\lim\limits_{C\to +\infty} \mathbb E \big[ x_1^2 \; \mathbb 1 \left\{x_1 \geq C\right\}\big] = 0$. Moreover, by Chebyshev's inequality:
\begin{align}
    \mathbb P \Bigg(\Big|\sum_{j=2}^m x_j^2 - \mathbb E x_1^2\Big| \geq \big(C^2 - \mathbb E x_1^2\big)(m-1)\Bigg) \leq \frac{\mathbb V (x_1^2)}{\big(C^2 - \mathbb E x_1^2\big)^2(m-1)} \underset{C \to +\infty}{\to} 0.\label{cool3}
\end{align}
By \eqref{cool}, \eqref{cool2} and \eqref{cool3}, we conclude that for all absolute constant $c>0$, there exists a large enough absolute constant $C>0$ such that the residual term is at most $\frac{c\sqrt{d}}{C}$. Take $c=\frac{1}{2}$ and $m \geq \frac{d}{3}$, then by equation \eqref{decomp_princ_resid} we get:
\begin{align*}
    \sup_{v \in V} \frac{\|v\|_1}{\|v\|_2} &\geq \frac{\sqrt{m}}{C} - \frac{c\sqrt{d}}{C}\geq \Big(1 - \frac{\sqrt{3}}{2}\Big)\frac{\sqrt{m}}{C} =: \CL{\ref{norm_1_geq_norm_2}}\sqrt{m}.
\end{align*}
\end{proof}

\begin{proof}[Proof of Proposition \ref{LB_Robustness_Privacy}]

We distinguish between two cases.
\begin{enumerate}
    \item \textbf{\underline{First case}} If $\frac{d}{\alpha\sqrt{nk}} \leq \frac{\epsilon}{\alpha}\sqrt{\frac{d}{k}}$ i.e. if the dominating term comes from the contamination, taking $p,q \in \mathcal{P}_d$ like in the proof of Proposition \ref{lem:lower_bound} and $t \in \{p,q\}$ uniformly at random yields that
    \begin{align*}
        &\inf_{\hat p} \sup_{p \in \mathcal{P}_d}\mathbb{P}\left(\sup\limits_{Z \in \mathcal{C}(Y)}\|\hat{p}(Z) - p\|_1 \geq \|p-q\|_1/2\right) \\
        \geq & \inf_{\hat p} \mathbb{E}_{t \in \{p,q\}}\mathbb{P}_t\left(\sup\limits_{Z \in \mathcal{C}(Y)}\|\hat{p}(Z) - p\|_1 \geq \|p-q\|_1/2\right) \geq  \frac{1}{2} \geq O(e^{-d}),
    \end{align*}
    where $\|p-q\|_1 \gtrsim \frac{\epsilon}{\alpha} \sqrt{\frac{d}{k}} \land 1$.
    \item \textbf{\underline{Second case}} If $\frac{d}{\alpha\sqrt{nk}} \geq \frac{\epsilon}{\alpha}\sqrt{\frac{d}{k}}$ i.e. if the dominating term comes from the privacy constraint, then we set $N=nk$ and assume that we observe $Z_1,\dots, Z_N$ iid with probability distribution $Z | X \sim Q(\cdot|X)$ such that $X$ has a discrete distribution over $\{1,\dots,d\}$. In other words, the random variables $Z_i$ are no longer batches, but rather we have $nk$ iid clean samples that are privatized versions of iid samples with distribution $p$. By section \ref{app:proofLBprivacyNoOutliers}, it holds that 
    $$ \inf_{\hat{p}} \sup_{p \in \mathcal{P}_d}\mathbb{E}\left[\sup_{\text{contamination}}\|\hat{p} - p\|_1\right] \geq \inf_{\hat{p}} \sup_{p \in \mathcal{P}_d}\mathbb{E}\|\hat{p} - p\|_1 \geq c \frac{d}{\alpha \sqrt{N}},$$
for some small enough absolute constant $c>0$. We use the definition of $\gamma$ and of the cubic set of hypotheses $\mathcal{P}$ from \eqref{def_prior_privacy}.
Let $ \hat{p}$ be any estimator of the probability parameter and, for some small enough absolute constant $c>0$, define
\begin{equation}\label{def_r}
    r = c\, \frac{d}{\sqrt{kn}}.
\end{equation}
We first justify that for this particular set of hypotheses, it is possible to assume \textit{wlog} that
\begin{equation}\label{maximum_discrepancy}
    \|p - \hat{p}\|_1 \leq 6 \gamma d \,\leq\, 6 \, \cgamma r.
\end{equation} 
Indeed, define $u = \left(\frac{1}{d} \right)_{j=1}^d$. If for some observation $Z = (Z_1,\dots Z_N)$ the estimate $ \hat{p}(Z)$ satisfies $\| \hat{p}(Z) - u\|_1 > 4\gamma d$, then it is possible to improve $ \hat{p}$ by replacing it with the estimator $\bar{p}$ satisfying $\|\bar{p}(Z) - p\|_1 \leq 6 \gamma d$ and defined as:
\begin{align*}
    \bar{p} :=  \hat{p} \mathbb{1}\left\{\big\| \hat{p} - u\big\|_1 \leq 4 \gamma d\right\} + u \mathbb{1}\left\{\big\| \hat{p} - u\big\|_1 > 4 \gamma d\right\}.
\end{align*}
Indeed, recalling that $\forall p \in \mathcal{P}: \big\|u - p\big\|_1 = 2 \gamma d$, there are two cases.
\begin{itemize}
    \item If $\big\| \hat{p}(Z) - u\big\|_1 \leq 4\gamma d$, then $ \hat{p} = \bar{p}$ so that $\big\|p - \bar{p}\big\|_1 \leq \big\|p - u\big\|_1 + \big\|u - \bar{p}\big\|_1 \leq 2 \gamma d + 4 \gamma d = 6 \gamma d$.
    \item  Otherwise, $\bar{p} = u$ and we get
\begin{align*}
    \big\|\bar{p}(Z) - p\big\|_1 & = 2\gamma d = 4 \gamma d - 2 \gamma d< \big\| \hat{p}(Z) - u\big\|_1 -  \big\|u - p\big\|_1 \leq \big\| \hat{p}(Z) - p\big\|_1,
\end{align*}
\end{itemize}

which proves that \eqref{maximum_discrepancy} can be assumed \textit{wlog}. Now, from the proof of Lemma \ref{lem:LB_Privacy_no_outliers}, we also have
\begin{align}
    \sup_{p \in \mathcal{P}} \E_p\big\| \hat{p} - p\big\|_1 \, \geq \, \frac{\cgamma}{4} r =: Cr. \nonumber
\end{align}
Fix any $p \in \mathcal{P}$ and write $\pi := \sup\limits_{p \in \mathcal{P}} \mathbb{P}_p\left(\|\hat{p} - p\|_1 \geq cr\right)$ for $c = \cgamma \left(\frac{1}{4} - 6 \delta\right)> 0 $ for $\delta < \frac{1}{24} =: c'$.

It follows that:
\begin{align*}
    C r &\leq \sup_{p \in \mathcal{P}} ~ \E_p\big\| \hat{p} - p\big\|_1 \\
    & = \sup_{p \in \mathcal{P}}\left\{~ \E_p\Bigg[\big\| \hat{p} - p\big\|_1 \; \mathbb{1}\left\{\big\| \hat{p} - p\big\|_1 \geq cr\right\}\Bigg] ~+ ~ \E_p\Bigg[\big\| \hat{p} - p\big\|_1\; \mathbb{1}\left\{\big\| \hat{p} - p\big\|_1 < cr\right\}\Bigg]~\right\}\\
    & \leq 6 \cgamma r \cdot \pi \, + \, cr \hspace{3mm} \text{ by equation \eqref{maximum_discrepancy}, so that } ~ \pi \geq \frac{C-c}{6\cgamma} \geq \delta \geq O(e^{-d}).
\end{align*}

\end{enumerate}

\end{proof}

\section{Simpler proof of the lower bound with privacy and no outliers}\label{app:proofLBprivacyNoOutliers}

Here, we assume that $k=1$ and that we observe $Z_1,\dots, Z_n$ that are $n$ iid with probability distribution $Z | X \sim Q(\cdot|X)$ and $X$ has a discrete distribution over $\{1,\dots,d\}$. We prove the following Lemma
\begin{lemma}\label{lem:LB_Privacy_no_outliers}
In this setting, it holds
\begin{equation}
    \inf_{\hat p} \sup_{p \in \mathcal{P}_d}\mathbb{E}\|\hat p - p\|_1 \geq c \frac{d}{\alpha \sqrt{n}},
\end{equation}
for some small enough absolute constant $c>0$.
\end{lemma}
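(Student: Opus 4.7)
The plan is to apply Assouad's lemma with a cubic hypothesis family centered at the uniform distribution, combined with the sharp local-privacy information-contraction inequality of Duchi--Jordan--Wainwright.

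\emph{Construction of the prior.} For a small parameter $\gamma>0$ to be fixed and each $\tau \in \{-1,+1\}^{d/2}$, define $p_\tau \in \mathcal{P}_d$ by pairing indices: $p_\tau(2j-1) = 1/d + \gamma\tau_j$ and $p_\tau(2j) = 1/d - \gamma\tau_j$ for $j \in [d/2]$. This is a valid probability vector whenever $\gamma \leq 1/d$, and I will denote this family by $\mathcal{P}$, so that every $p \in \mathcal{P}$ satisfies $\|p-u\|_1 = \gamma d$ with $u=(1/d,\dots,1/d)$. For two vertices $\tau,\tau' \in \{-1,+1\}^{d/2}$, one has $\|p_\tau-p_{\tau'}\|_1 = 4\gamma\, d_H(\tau,\tau')$ where $d_H$ is the Hamming distance.

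\emph{Information bound under local privacy.} The key quantitative input is the tensorized Duchi--Jordan--Wainwright inequality: for any $\alpha$-LDP channel $Q$ with $\alpha\in(0,1]$ and any $p,p'\in\mathcal{P}_d$,
\begin{equation*}
    \mathrm{KL}\bigl(Qp^{\otimes n}\,\|\,Qp'^{\otimes n}\bigr) \;\leq\; C_0\, n\, \alpha^2 \,\|p-p'\|_{\mathrm{TV}}^2.
\end{equation*}
For neighbors in the hypercube ($d_H(\tau,\tau')=1$) one has $\|p_\tau-p_{\tau'}\|_{\mathrm{TV}} = 2\gamma$, so the KL between product measures of privatized observations is at most $4C_0 n\alpha^2\gamma^2$. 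Choose
\begin{equation*}
    \gamma \;=\; \cgamma \cdot \frac{1}{\alpha\sqrt{n}} \;\wedge\; \frac{1}{d},
\end{equation*}
with $\cgamma>0$ an absolute constant small enough that the neighbor KL is at most, say, $1/8$. In the regime where the $1/d$ cap is active, $d/(\alpha\sqrt n) \gtrsim 1$ and the claimed bound is vacuous (the trivial rate $1$ applies), so I will focus on $\gamma = \cgamma/(\alpha\sqrt n)$.

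\emph{Applying Assouad's lemma.} Writing $\hat\tau$ for the coordinatewise rounding of any estimator $\hat p$ on the hypercube, the standard Assouad inequality yields
\begin{equation*}
    \inf_{\hat p,\,Q}\;\max_{\tau}\;\mathbb{E}_{Qp_\tau^{\otimes n}} \bigl\|\hat p - p_\tau\bigr\|_1 \;\geq\; 2\gamma \cdot \frac{d}{2} \cdot \Bigl(1 - \sqrt{\tfrac{1}{2}\max_{d_H(\tau,\tau')=1}\mathrm{KL}(Qp_\tau^{\otimes n}\|Qp_{\tau'}^{\otimes n})}\Bigr) \;\gtrsim\; \gamma d \;\asymp\; \frac{d}{\alpha\sqrt{n}},
\end{equation*}
which establishes the lemma. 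This last display defines the constant $r = c\, d/\sqrt{kn}$ used in the proof of Proposition~\ref{LB_Robustness_Privacy}.

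\emph{Main obstacle.} The routine obstacles are bookkeeping (verifying $\gamma\leq 1/d$ defines the relevant regime, and threading the constants through Assouad). The one genuinely non-trivial ingredient is the LDP information-contraction inequality upgrading a na\"ive KL bound into one with the $\alpha^2$ prefactor; I would quote it from the LDP literature (Duchi--Jordan--Wainwright), as its proof relies on the variational characterization of $\chi^2$ and the boundedness of the likelihood ratio $Q(\cdot|x)/Q(\cdot|x') \leq e^\alpha$ guaranteed by \eqref{def_LDP}.
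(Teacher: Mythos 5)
Your proposal is correct and follows essentially the same route as the paper: a cubic hypothesis family around the uniform distribution, Assouad's lemma, and an LDP information-contraction bound between privatized product measures of neighboring hypotheses. The only material difference is that you cite the Duchi--Jordan--Wainwright KL contraction $\mathrm{KL}(Qp\|Qp') \lesssim \alpha^2\,\mathrm{TV}(p,p')^2$ as a black box, whereas the paper derives the analogous $\chi^2$ bound directly from the LDP condition via $\big|Q(z|x)/Q(z|1)-1\big|\leq e^\alpha-1$ and the centering $\int \Delta p\, d\nu=0$; both yield the same $\alpha^2\gamma^2$ control on the neighbor divergence and the same final rate.
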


For all $\epsilon \in \{\pm 1\}^{\lfloor d/2 \rfloor}$, define the probability vector $p_\epsilon \in \mathcal{P}_d$ such that 
\begin{equation}\label{def_prior_privacy}
    \forall j \in \{1,\dots, d\}: p_\epsilon(j) = \begin{cases}\frac{1}{d} + \epsilon_j \; \gamma & \text{ if } j \leq \frac{d}{2},\\
    \frac{1}{d} & \text{ if $d$ is odd and $j=\frac{d+1}{2}$,} \\
    \frac{1}{d} - \epsilon_{d-j+1} \gamma & \text{ otherwise, }
    \end{cases}
\end{equation}
where $\gamma = \frac{\cgamma}{\alpha \sqrt{n}} \land \frac{\cgamma}{d}$ and $\cgamma$ is a small enough absolute constant.
Consider the cubic set of hypotheses
\begin{equation}\label{def_set_prior_privacy}
    \mathcal{P} = \Big\{p_\epsilon ~\big|~ \epsilon \in \{\pm 1\}^{\lfloor d/2 \rfloor}\Big\}.
\end{equation}
This set $\mathcal{P}$ consists of $M = 2^{\lfloor d/2 \rfloor}$ hypotheses. 
Over $\mathcal{P}$, the $\ell_1$ distance simplifies as follows:
\begin{align}
    \forall \epsilon, \epsilon' \in \{\pm 1\}^{\lfloor d/2\rfloor}: \|p_\epsilon - p_{\epsilon'}\|_1 = 4 \gamma \; \rho(\epsilon, \epsilon'), \label{TV_Hamming}
\end{align}
where $\rho(\epsilon,\epsilon') = \sum\limits_{j=1}^{\lfloor d/2\rfloor }\mathbb{1}_{\epsilon_j \neq \epsilon'_j}$ denotes the Hamming distance between $\epsilon$ and $\epsilon'$. 

To apply Assouad's Lemma (see e.g. \cite{tsybakov2008introduction} Theorem 2.12.$(ii)$), let $\epsilon, \epsilon' \in \{\pm 1\}^{\lfloor d/2\rfloor}$ such that $\rho(\epsilon,\epsilon') = 1$. 
Recall that the observations $Z_1,\dots, Z_n$ are iid and follow the distribution $Z|X \sim Q(\cdot|X)$ where $Q$ is an $\alpha$-locally differentially private mechanism. 
Fix any such mechanism $Q$, and denote by $q_\epsilon$ and $q_{\epsilon'}$ the respective densities of $Z$ when $X \sim p_\epsilon$ and $X \sim p_{\epsilon'}$. 
We therefore have $\forall z \in \mathcal{Z}: q_\epsilon(z) = \int Q(z|x) \; p_\epsilon(x) d\nu(x)$ where $\nu$ denotes the counting measure over $\{1,\dots,d\}$. 
For some probability distribution $P$, we also denote by $P^{\otimes n}$ the law of the probability vector $(X_1, \dots, X_n)$ when $X_i \overset{iid}{\sim} P$. 
Now, we have: %and denote by $j_0 \in \{1,\dots, \lfloor d/2\rfloor\}$ the unique index such that $\epsilon_{j_0} = -\epsilon'_{j_0}$. By Lemma 2.7 in \cite{tsybakov2008introduction}, we have
\begin{align}
    TV\left(q_\epsilon^{\otimes n}, q_{\epsilon'}^{\otimes n}\right) &\leq \sqrt{\chi^2\left(q_\epsilon^{\otimes n} \; || \; q_{\epsilon'}^{\otimes n}\right)}  = \sqrt{\left(1+\chi^2\left(q_\epsilon \; || \; q_{\epsilon'}\right)\right)^n -1} \label{TV_leq_chi2},
\end{align}
and defining $\Delta p(x) = p_\epsilon(x) - p_{\epsilon'}(x)$ for all $x \in \{1,\dots,d\}$, we can write:
\begin{align*}
    \chi^2\left(p_\epsilon \; || \; p_{\epsilon'}\right) &= \int_{\mathcal{Z}} \frac{\left(q_\epsilon(z) - q_{\epsilon'}(z)\right)^2}{q_{\epsilon'}(z)} dz = \int_{\mathcal{Z}} \frac{\left(\int Q(z|x) \; \Delta p(x)d\nu(x)\right)^2}{ \int Q(z|x) \; p_{\epsilon'}(x) d\nu(x)} dz \\
    & = \int_{\mathcal{Z}}  Q(z|1) \; \frac{\left({\displaystyle\int_{\mathcal{X}} } \Big(\frac{Q(z|x)}{ Q(z|1)} -1\Big) \; \Delta p(x)d\nu(x)\right)^2}{ {\displaystyle\int_{\mathcal{X}} } \frac{Q(z|x)}{ Q(z|1)} \; p_{\epsilon'}(x) d\nu(x)} dz ~~~~ \text{ since } \int \Delta p(x) d\nu(x) = 0\\
    & \leq \int_{\mathcal{Z}}  Q(z|1) \; \frac{\left({\displaystyle\int_{\mathcal{X}} }\Big|\frac{Q(z|x)}{ Q(z|1)} -1\Big| \; \big|\Delta p(x)\big| \, d\nu(x)\right)^2}{ {\displaystyle\int_{\mathcal{X}} } e^{-\alpha} \; p_{\epsilon'}(x) d\nu(x)} dz\\
    & \leq \int_{\mathcal{Z}}  Q(z|1) \; \frac{\left( C \alpha \;  TV(p_\epsilon, p_{\epsilon'})\right)^2}{ e^{-\alpha} } dz = e^\alpha C^2 \alpha^2 \left(2\gamma \;  \rho(p_\epsilon, p_{\epsilon'})\right)^2\\
    & \leq 12 C^2 \alpha^2 \gamma ^2 \leq  \frac{12 C^2 \cgamma^2}{n},
\end{align*}
where $C>0$ is an absolute constant such that for all $\alpha \in (0,1)$ we have $e^\alpha - 1 \leq C \alpha$ and $1 - e^{-\alpha} \leq C \alpha$. Now by \eqref{TV_leq_chi2}, we have:
\begin{align*}
    TV\left(q_\epsilon^{\otimes n}, q_{\epsilon'}^{\otimes n}\right) & \leq  \sqrt{\left(1+\chi^2\left(q_\epsilon \; || \; q_{\epsilon'}\right)\right)^n -1} \leq \sqrt{\exp\left(12 C^2 \cgamma^2\right)-1}. 
\end{align*}
Choosing $\cgamma$ small enough therefore ensures that $TV\left(q_\epsilon^{\otimes n}, q_{\epsilon'}^{\otimes n}\right) \leq \frac{1}{2}$, so that by Assouad's lemma, the minimax risk is lower bounded as:
\begin{align*}
    \inf_{\widehat p} \sup_{p \in \mathcal{P}_d} \E\|\widehat p - p \|_1 \geq \left\lfloor \frac{d}{2} \right\rfloor \frac{1}{2} 4 \gamma \left(1 - \frac{1}{2}\right) \geq \frac{\cgamma}{10} \left( \frac{d}{\alpha\sqrt{n}} \land 1\right).
\end{align*}

\end{document}